\newcommand{\version}{1.9}
\newcommand{\mytitle}[2][]{%
  \gdef\mylongtitle{\MakeUppercase{#2}}
  \gdef\myshorttitle{\MakeUppercase{#1}}
  \gdef\mypdftitle{#1 v. \version}
}
\newcommand*{\mykeywords}{Definably complete, i-minimal, Hausdorff
  measure, restrained}
\title[\myshorttitle{} v. \version]{\mylongtitle\\
  \small{Version~\version}
}
\author[A. Fornasiero]{Antongiulio Fornasiero}
\address{Institut f\"ur Mathematische Logik\\
    Einsteinstr.~62, 48149 M\"unster, Germany}
\email{antongiulio.fornasiero@googlemail.com} 
\urladdr{http://www.dm.unipi.it/~fornasiero/}
\date{9 Apr 2011}
\newcommand*{\intro}[1]{\textbf{#1}}
\newcommand*{\Pa}[1]{\bigl( #1 \bigr)}
\newcommand*{\set}[1]{\{\,#1\,\}}
\newcommand*{\iset}[1]{\{#1\}}
\newcommand{\N}{\mathbb{N}}
\newcommand{\Z}{\mathbb{Z}}
\newcommand{\R}{\mathbb{R}}
\newcommand{\Rbar}{\bar\R}
\newcommand*{\inter}[1]{\mathring{#1}}
\DeclareMathOperator{\interior}{int}
\newcommand*{\cl}[1]{\overline{#1}}
\DeclareMathOperator{\cll}{cl}
\newcommand{\rest}{\upharpoonright}
\newcommand*{\pair}[1]{\langle #1 \rangle}
\newcommand{\av}{{\bar a}}
\newcommand{\bv}{{\bar b}}
\newcommand{\cv}{{\bar c}}
\newcommand{\dv}{{\bar d}}
\newcommand{\ev}{{\bar e}}
\newcommand{\x}{{\bar x}}
\newcommand{\y}{{\bar y}}
\newcommand{\z}{{\bar z}}
\def\Ind#1#2{#1\setbox0=\hbox{$#1x$}\kern\wd0\hbox to
  0pt{\hss$#1\mid$\hss}\lower.9\ht0\hbox to 0pt{\hss$#1\smile$\hss}\kern\wd0}
\newcommand{\et}{\ \&\ }
\newcommand{\vel}{\ \vee\ }
\newcommand{\Cp}{\mathcal C^p}
\newcommand{\Cone}{\mathcal C^1}
\def\hyph{\nobreakdash-\hspace{0pt}\relax}
\newcommand{\Wlog}{W.l.o.g\mbox{.}\xspace}
\newcommand{\wloG}{w.l.o.g\mbox{.}\xspace}
\newcommand{\ie}{i.e\mbox{.}\xspace}
\newcommand{\cf}{cf\mbox{.}\xspace}
\newcommand{\Tfae}{T.f.a.e\mbox{.}\xspace}
\newtheorem{lemma}{Lemma}[section]
\newtheorem{theorem}[lemma]{Theorem}
\newtheorem{corollary}[lemma]{Corollary}
\newtheorem{conjecture}[lemma]{Conjecture}
\newtheorem{open problem}[lemma]{Open problem}
\newtheorem*{proviso*}{Proviso}
\newtheorem*{fact*}{Fact}
\newtheorem{fact}[lemma]{Fact}
\theoremstyle{remark}
\newtheorem*{claim*}{Claim}
\newenvironment{prclaim}{\begin{inparaenum}[\textit{Claim} 1.\hspace{.7ex}]\item}
  {\end{inparaenum}\par\indent}
\newtheorem{exercise}[lemma]{Exercise}
\theoremstyle{definition}
\newtheorem{definition}[lemma]{Definition}
\newtheorem{notation}[lemma]{Notation}
\newtheorem{remark}[lemma]{Remark}
\newtheorem{final remark}[lemma]{Final remark}
\newtheorem{question}[lemma]{Question}
\newenvironment{sentence}[1][]{%
  \begin{list}{}{%
    \setlength\topsep{0.5ex}%
    \setlength\leftmargin{\parindent}%
  }%
  \item[#1]
 }
 {\end{list}}
\newcommand{\Ds}{\mathcal D_\Sigma}
\newcommand{\Fs}{\mathcal F_\sigma}
\newcommand{\Gdelta}{\mathcal G_\delta}
\newcommand{\K}{\mathbb K}
\newcommand{\Rc}{\mathcal R}
\newcommand{\iminimal}{i\hyph minimal\xspace}
\newcommand{\dcompact}{d\hyph compact\xspace}
\newcommand{\pN}{pseudo\hyph$\N$\xspace}
\newcommand{\psenum}{pseudo\hyph enumerable\xspace}
\newcommand{\dminimal}{d\hyph minimal\xspace}
\newcommand{\dminimality}{d\hyph minimality\xspace}
\newcommand{\ominimal}{o\hyph minimal\xspace}
\newcommand{\Zdefinable}{$\emptyset$\hyph definable\xspace}
\newcommand{\Rcal}{\mathcal R}
\newcommand{\Disc}{\mathfrak D}
\newcommand{\Bad}{\mathfrak B}
\newcommand{\cPic}{\sideset{^c}{^c}{\operatorname{\Pi}}}
\newcommand{\ao}{a.o\mbox{.}\xspace}
\newcommand{\Sdiff}{\mathbin{\Delta}}
\DeclareMathOperator{\lexmin}{lex\, min}
\DeclareMathOperator{\Fcl}{Fcl}
\DeclareMathOperator{\rk}{rk}
\newcounter{saveenum}
\newcommand*{\romref}[1]{{\rm{\ref{#1}}}}
\newcommand*{\enumref}[1]{\rm{(\ref{#1})}}
\newcommand{\dimH}{\dim_{\mathcal H}}
\newcommand{\eps}{\varepsilon}
\newcommand{\tame}{restrained\xspace}
\begin{document}

\begin{abstract}
We study first-order expansions of the reals which do not define the set of
natural numbers.
We also show that several stronger notions of tameness are
equivalent to each others.
\end{abstract}

\keywords{\mykeywords}
\subjclass[2010]{%
Primary 
03C64;    	
Secondary 
12J15,  
54H05,   
28A78
}
\maketitle


\makeatletter
\renewcommand\@makefnmark%
   {\normalfont(\@textsuperscript{\normalfont\@thefnmark})}
\renewcommand\@makefntext[1]%
   {\noindent\makebox[1.8em][r]{\@makefnmark\ }#1}
\makeatother

\section{Introduction}

O-minimal structures were introduced in the '80s as a framework for 
``tame topology'' (see \cites{vdd,DM96}).
Many expansions of $\Rbar$ have been shown to be \ominimal:
some important examples are $\pair{\R_{an}, \exp}$, the expansion of $\Rbar$ by
restricted analytic functions and the exponential function
(see~\cite{DMM94}), and the Pfaffian closure of $\pair{\R_{an},\exp}$
(see~\cite{speissegger99}).

Given a first-order structure $\K$, unless otherwise specified,
by ``definable'' we will always mean ``definable with parameters from $\K$''.
$\Rc$~will be an expansion of the field of real numbers~$\Rbar$.

If $\Rc$ is \ominimal, then every definable set has finitely many
connected components; most importantly, the dimension 
(see Definition~\ref{def:dim}) is well-behaved 
(that is, it satisfies the axioms (Dim 1--4) in \cite{dries89}:
see also Conjecture~\ref{conj:dim}),
and moreover satisfies, for every nonempty definable set~$X$, 
\begin{enumerate}[({Dim} 1)]
\setcounter{enumi}{4}
\item\label{en:dim-finite} $\dim(X) = 0$ iff $X$ is finite;
\item\label{en:dim-frontier} $\dim(\partial X) < \dim (X)$.
\end{enumerate}

In his article \cite{miller05}, C.~Miller studied several classes of structures
expanding $\Rbar$ which still present a ``tame'' behaviour, 
without being \ominimal.
In this article, we will focus on two such classes: \intro{\iminimal{}}
structures and \intro{\tame{}} structures.

One of the first examples of \tame non \ominimal structures
was given by \dminimal expansions of~$\Rbar$ (see
\cites{dries85, FM05, MT06}), that is structures such that every definable
subset $X$ of $\R$ with empty interior is the union of finitely many discrete
sets, and the number of discrete sets does not depend on the parameters of
definition of~$X$: for instance, $\pair{\Rbar, 2^\Z}$ is a \dminimal structure
which is not \ominimal.
D-minimal structures satisfy many of the properties of \ominimal structures;
most importantly, the dimension is well-behaved
(see \cites{miller05, tame:3} and Theorem~\ref{mainthm:2}),
but the additional properties (Dim~\ref{en:dim-finite}) and
(Dim~\ref{en:dim-frontier}) do not  hold. 

Remember that a subset $X$ of a topological space $Y$ is nowhere dense 
(in~$Y$) if the closure of $X$ has empty interior,
and that a subset of $\R^n$ is null if it has Lebesgue measure~$0$.

\begin{theorem}\label{mainthm:3}
\Tfae:
\begin{enumerate}
\item\label{en:main3-imin} 
$\mathcal R$ is \intro{\iminimal{}}
(that is, every definable subset of $\R$ has interior or is nowhere dense);
\item\label{en:main3-null}
every definable subset of $\R$ has interior or is null;
\item\label{en:main3-Haus}
every definable subset of $\R$ has interior or has Hausdorff dimension~$0$.
\end{enumerate}
\end{theorem}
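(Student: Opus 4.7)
The plan is to establish the cycle \ref{en:main3-Haus} $\Rightarrow$ \ref{en:main3-null} $\Rightarrow$ \ref{en:main3-imin} $\Rightarrow$ \ref{en:main3-Haus}.

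The implication from Hausdorff dimension zero to Lebesgue null is immediate and does not use definability: a subset of $\R$ of Hausdorff dimension $0$ has vanishing $s$-dimensional Hausdorff measure for every $s>0$, in particular for $s=1$, and the $1$-dimensional Hausdorff measure on $\R$ coincides up to a constant with Lebesgue measure.

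For \ref{en:main3-null} $\Rightarrow$ \ref{en:main3-imin}, I would argue by contradiction. Suppose a definable $X\subseteq\R$ has empty interior but is not nowhere dense, so $\cl X$ contains some open interval $I$. Then $X\cap I$ is dense in $I$ yet contains no subinterval of $I$, and hence the complement $Y:=I\setminus X$ is a definable subset of $I$ that is \emph{both} dense in $I$ and has empty interior. Applying hypothesis \ref{en:main3-null} to the definable sets $X\cap I$ and $Y$ yields that both are null; but they partition $I$, contradicting the positivity of the Lebesgue measure of $I$.

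The main work lies in \ref{en:main3-imin} $\Rightarrow$ \ref{en:main3-Haus}: in an \iminimal{} structure, every definable nowhere dense $X\subseteq\R$ must have Hausdorff dimension $0$. Here the strategy is to invoke the structure theory of \iminimal expansions built up in the paper. Specifically, I would try to show that such $X$ is \psenum---a definable image of a definable discrete ``pseudo-$\N$''---and then to observe that, after reducing via cell decomposition to definable monotone continuous pieces on the source, the image of a \psenum set in $\R$ has Hausdorff dimension~$0$ (a \psenum set is countable, and countable subsets of $\R$ have Hausdorff dimension~$0$; the reduction to monotone pieces ensures this passes to the image).

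The main obstacle is this last implication: one must upgrade the purely topological assumption of being nowhere dense to the quantitative covering estimate encoded by Hausdorff dimension $0$, an upgrade that Cantor-type sets show is not automatic. This is precisely where \iminimality---rather than mere tameness---must be used to rule out fractal behaviour among definable nowhere dense sets, and it is where the earlier machinery of the paper (dimension, cell decomposition, pseudo-enumerability) has to carry the argument.
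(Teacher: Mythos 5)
Your treatment of $\romref{en:main3-Haus}\Rightarrow\romref{en:main3-null}$ is fine (and standard), and your by-hand argument for $\romref{en:main3-null}\Rightarrow\romref{en:main3-imin}$ is correct and essentially what the paper has in mind when it calls this step easy. The genuine gap is in $\romref{en:main3-imin}\Rightarrow\romref{en:main3-Haus}$, and it is precisely at the point you yourself flag as the hard one. You propose to show that a definable nowhere dense $X\subseteq\R$ is \psenum{}, hence countable, hence of Hausdorff dimension $0$. But nowhere dense definable sets in an \iminimal{} structure need \emph{not} be \psenum{}: a \psenum{} set is by definition the image of a definable discrete set and is therefore countable, whereas a definable nowhere dense set can perfectly well be a Cantor-type set of the cardinality of the continuum. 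What Lemma~\ref{lem:nd-countable} actually gives is a countable definable $Y$ with $\cl Y=\cl X$; it does \emph{not} say $X$ itself is countable, and since $\cl\Q=\R$ the closure of a countable set can have any Hausdorff dimension, so ``$Y$ countable'' carries no information about $\dimH(\cl Y)$. Your plan therefore does not close the very Cantor-set obstruction you identify.

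The paper's argument resolves this with a different and essentially analytic ingredient. After reducing to $A$ closed and nowhere dense and producing a countable definable $Y$ with $A=\cl Y$ (Lemma~\ref{lem:nd-countable}), it assumes $\dimH(A)>0$ and invokes \cite{EM01}*{Lemma~1}: there is an $n$ and a linear map $T:\R^n\to\R$ such that $T(A^n)$ has nonempty interior. Then $Z:=T(Y^n)$ is a definable countable set that is dense in a set with interior, hence somewhere dense; \iminimality{} forces $Z$ to have interior, which is absurd for a countable set. So the quantitative upgrade from ``nowhere dense'' to ``dimension zero'' comes not from pseudo-enumerability of $X$, but from the Edgar--Miller/Erd\H{o}s--Volkmann mechanism applied together with the fact that the countable \emph{dense} witness $Y$ is definable. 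If you want to salvage your approach, you would have to find a replacement for \cite{EM01}*{Lemma~1}; cell decomposition and monotone pieces on the source do not provide it.
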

The above  theorem shows that several ``natural'' strengthenings of
\dminimality are equivalent to each other, and
solves an open problem in \cite{miller05}*{\S3.1}
(and, implicitly, in \cite{FKMS} and \cite{FM01}, which also give some 
examples and general methods of constructing \iminimal expansions of~$\Rbar$).
See also \cites{tame:3, miller05} for some more properties of \iminimal
structures. 
We can give a higher-dimensional analogue of Theorem~\ref{mainthm:3}, but
first we need to define the dimension.
\begin{definition}\label{def:dim}
Let $\K$ be an expansion of an ordered field, and $X \subseteq \K^n$.
The \intro{dimension} of $X$ is $\dim(X)$, 
the maximum $e$ such that there exists a coordinate space $L$
of linear dimension~$e$, such that $\Pi^n_L(X)$ has nonempty interior (inside
$L$), where $\Pi^n_L$ is the orthogonal projection onto $L$.
\end{definition}

We denote by $\dimH$ the Hausdorff dimension.

\begin{theorem}\label{mainthm:imin-dim-n}
Let $\Rc$ be \iminimal and $C \subseteq \R^n$ be definable and nonempty.
Then, $C$~is Lebesgue measurable, almost open, and $\dimH(C) = \dim(C)$.
\end{theorem}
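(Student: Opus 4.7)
I~argue by induction on $n$, aiming to establish the three properties (measurability, almost-openness, and $\dimH(C)=\dim(C)$) simultaneously. For $n=1$, the conclusions come directly from Theorem~\ref{mainthm:3}: if $C$ has nonempty interior, then $\dim(C)=\dimH(C)=1$ and the definable set $C\setminus\interior(C)$ has empty interior, hence is nowhere dense (and null), giving almost-openness and measurability; if $C$ has empty interior, then $C$ is itself null, nowhere dense, and of Hausdorff dimension $0=\dim(C)$.

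For the inductive step, fix $C\subseteq\R^n$ definable and nonempty and let $\pi\colon\R^n\to\R^{n-1}$ be the projection omitting the last coordinate. By Theorem~\ref{mainthm:3}, each slice $C_x\subseteq\R$ is either null (and nowhere dense) or has nonempty interior, so the definable set $A:=\{x\in\R^{n-1}:C_x\text{ has nonempty interior}\}$ yields a partition $C=C_1\sqcup C_2$ with $C_1:=C\cap(A\times\R)$ and $C_2:=C\cap(A^c\times\R)$. The inductive hypothesis applied to $A$ gives its measurability, almost-openness, and $\dimH(A)=\dim(A)$.

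The piece $C_2$ has null fibres, hence is Lebesgue null (in particular measurable) by a Fubini/Tonelli argument on outer measure. Its Hausdorff dimension satisfies the Marstrand-type slicing bound $\dimH(C_2)\le\dimH(\pi(C_2))+\sup_x\dimH((C_2)_x)=\dimH(\pi(C_2))$, which together with the inductive hypothesis on $\pi(C_2)$ and the standard bound $\dim(C_2)\le\dimH(C_2)$ (coming from Lipschitz-invariance of $\dimH$ under coordinate projections) yields $\dimH(C_2)=\dim(C_2)\le\dim(C)$. For $C_1$, each set $C_x\setminus\interior(C_x)$ has empty interior in $\R$ and so is null by Theorem~\ref{mainthm:3}, so $C_1$ coincides up to a null set with the definable ``filling'' $E:=\{(x,y):x\in A,\ y\in\interior(C_x)\}$; then $\dimH(C_1)\le\dimH(A\times\R)=\dimH(A)+1=\dim(A)+1$, and the key inequality $\dim(A)+1\le\dim(C)$ is obtained by extending a coordinate projection witnessing $\dim(A)=k$ by the last axis, producing a $(k+1)$-dimensional projection of $C$ with nonempty interior (using a definable choice of an open interval inside each slice over $A$).

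The principal obstacle is almost-openness, which reduces to the higher-dimensional analogue of the \iminimal hypothesis: every definable subset of $\R^n$ with empty interior is nowhere dense. I~would prove this by a parallel induction on $n$, ruling out that some definable $D\subseteq\R^n$ with empty interior has $\interior(\cl(D))\ne\emptyset$ by slicing over an open box in $\cl(D)$, invoking the \iminimal property fibrewise, and closing the argument with Baire category inside the box. Granted this, $C\setminus\interior(C)$ is a definable set with empty interior, hence nowhere dense, so $C$ is almost open; measurability of $C$ then follows from that of $C_1$ and $C_2$, completing the induction.
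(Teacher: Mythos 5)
Your plan takes a genuinely different route from the paper's proof and, as written, has several substantive gaps. The paper handles the whole theorem in three lines by reducing to the closed set $D\coloneqq\cl C$: since $D$ is closed it is a $\Ds$ set, so Lemma~\ref{lem:dim-H} gives $\dimH(D)=\dim(D)$; \iminimality (in its higher-dimensional form from \cite{tame:3}*{\S3}) gives $\dim(D)=\dim(C)$ and shows that $E\coloneqq C\setminus\inter C$, having empty interior, is nowhere dense; then Lemma~\ref{lem:tame-measure-n} applied to the \emph{closed} set $\cl E$ gives that $E$ is null, which yields both measurability and almost-openness at once. In other words, all the delicate measure-theoretic work is offloaded to the already-proved $\Ds$ case by passing to closures, precisely because general definable sets are not amenable to direct Fubini arguments.

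The first gap in your version is the claim that $C_2$ is Lebesgue null ``by a Fubini/Tonelli argument on outer measure'' because its fibres are null. Tonelli presupposes measurability, which is exactly what you are trying to establish; and the outer-measure version fails for arbitrary sets (the Sierpi\'nski example, under CH, is a non-null subset of $[0,1]^2$ all of whose vertical slices are countable, hence null). What actually saves the day is that $C_2$ has empty interior, hence by the $n$-dimensional \iminimal fact is nowhere dense, so one can pass to $\cl C_2$ (a closed, hence $\Ds$, set) and run Fubini there as in Lemma~\ref{lem:tame-measure-n}. Your outline puts the $n$-dimensional \iminimal statement \emph{after} the Fubini step, so the argument as ordered does not close.

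The second gap is the ``Marstrand-type slicing bound'' $\dimH(C_2)\le\dimH(\pi(C_2))+\sup_x\dimH((C_2)_x)$. With Hausdorff dimension on the projection this inequality is false in general: there are compact $A,B\subset\R$ with $\dimH(A)=\dimH(B)=0$ but $\dimH(A\times B)=1$, and taking $C_2=A\times B$ makes the right-hand side $0$ while the left-hand side is $1$. The correct versions of this slicing bound replace $\dimH(\pi(C_2))$ by an upper box (Minkowski) or packing dimension, which your inductive hypothesis gives you no control over; indeed whether $\dim$ agrees with upper Minkowski dimension on definable sets is precisely the open question recorded in the paper right after Theorem~\ref{mainthm:sparse}. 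Finally, your derivation of $\dim(A)+1\le\dim(C)$ via ``extending a coordinate projection by the last axis'' needs more than a definable choice of open subinterval in each fibre: one needs the choice to be continuous off a small set so that its graph actually sweeps out an open set; the paper instead gets this directly from the fibre-additivity of $\dim$ (Lemma~\ref{lem:dim-additive}). I would encourage you to prove the $n$-dimensional \iminimal fact first (its proof is the content of Lemma~\ref{res:meager-n}, following \cite{MS99}*{1.6}), and then follow the paper's reduction to the closed set $\cl C$; this avoids both of the problematic measure-theoretic steps above.
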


I am grateful to C. Miller for allowing me to present the following
result and its proof (see \S\ref{sec:real} for the proof).
\begin{theorem}[C. Miller]\label{mainthm:sparse}
Let $\Rcal$ be an \ominimal expansion of~$\Rbar$.
Let $E \subset \R^n$ be closed, such that $\dim(E) = 0$. 
Denote  by $\pair{\Rcal, E}$ the expansion $\pair{\Rcal, (Y)}$ of~$\Rcal$,
where $Y$ ranges among all subsets of finite Cartesian powers of~$E$.
Then, either $\pair{\Rcal, E}$ defines~$\N$, 
or $\pair{\Rcal, E}^\#$ is \iminimal.
\end{theorem}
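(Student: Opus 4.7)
The plan is to prove the contrapositive: assume $\pair{\Rcal, E}$ does not define~$\N$, and show that the open core $\pair{\Rcal,E}^{\#}$ is \iminimal, \ie that every subset of~$\R$ definable in this reduct either has nonempty interior or is nowhere dense. A first observation is topological: because $E$ is closed with $\dim(E)=0$, every coordinate projection of~$E$ has empty interior by Definition~\ref{def:dim}, so $E$ itself has empty interior and, being closed, is nowhere dense in~$\R^n$; the same dimension count applied to $E^k$ (whose coordinate projections factor as products of projections of $E$) shows that every $E^k \subseteq \R^{kn}$ is closed and nowhere dense.

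The heart of the argument is a normal-form theorem for $\pair{\Rcal,E}$-definable sets: every such set should decompose as a finite union of images $f(V\times A)$, where $V$ is an $\Rcal$-definable cell, $A$ is an arbitrary subset of some~$E^k$, and $f$ is a continuous $\Rcal$-definable map injective in the second argument. Such normal forms are typical of expansions of an \ominimal structure by a small predicate (compare~\cite{miller05} and~\cite{FM01}), and the hypothesis that $\N$ is not definable should be used precisely to keep iterated projections and Boolean operations under control. The consequence is that every $\pair{\Rcal,E}$-definable open subset of~$\R^m$ agrees, off a nowhere-dense set, with an $\Rcal$-definable open set. Feeding this into the open core $\pair{\Rcal,E}^{\#}$: a set definable there is built from definable open sets by Boolean operations, so agrees, modulo a nowhere-dense error, with an $\Rcal$-definable set; since $\Rcal$ is \ominimal, its definable subsets of~$\R$ are finite unions of points and open intervals, and adding a nowhere-dense correction preserves the \iminimal dichotomy between having interior and being nowhere dense.

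The main obstacle is the normal-form theorem itself, and specifically its projection step: in general, the projection of a closed nowhere-dense subset of~$\R^n$ can have nonempty interior in~$\R^{n-1}$, so the argument must make substantive use of the hypothesis that $\N$ is not definable. I would carry this out by a Baire-category dichotomy: if some definable open set failed to agree, off a nowhere-dense set, with an $\Rcal$-definable open set, then iterating its defining formula and extracting isolated endpoints should produce a definable unbounded discrete subset of~$\R$, hence define~$\N$, a contradiction.
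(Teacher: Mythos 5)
The proposal has a genuine gap at its center, and you acknowledge this yourself: the ``normal-form theorem'' is the entire content of the argument and you never prove it. The Baire-category sketch you offer for it (``extract isolated endpoints to produce an unbounded discrete set, hence define~$\N$'') is nowhere near a proof; it is precisely the kind of step that must be carried out, not merely gestured at, for the statement to go through. Two further issues compound this. First, your reduction ``a set definable in the open core is built from definable open sets by Boolean operations'' is false: the open core is a full first-order reduct whose defining predicates are the definable open sets, so its definable sets also arise by existential quantification/projection, not just Boolean combinations. Your nowhere-dense-error bookkeeping is not stable under projections, so even a correct normal form for open sets would not yield \iminimal{}ity of the open core via the route you describe. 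Second, you never invoke the key available tool: Fact~\ref{fact:hier-tame} (Hieronymi). The paper's first step is to deduce from ``$\pair{\Rcal,E}$ does not define~$\N$'' that $\pair{\Rcal,E}$ is \tame, and then to bring the entire tame dimension theory of Sections~3--4 to bear. Without this, you have no quantitative mechanism to control images of~$E^m$, which is exactly what your argument lacks.

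For comparison, the paper's proof is short precisely because it leans on three substantial prior results rather than a bespoke normal form: Hieronymi's theorem to get tameness; a result of \cite{FKMS}*{1.11} (in the case $n=1$) producing an $\Rcal$-definable $f:\R^m\to\R$ with $B\subseteq\cll(f(E^m))$ for any open-core-definable $B\subseteq\R$ with empty interior; and the dimension calculus for tame structures (Corollary~\ref{cor:dim-function}, $\dim(E^m)=0\Rightarrow\dim(f(E^m))=0$, Lemma~\ref{res:meager-n}) to conclude $f(E^m)$, hence $\cll(f(E^m))\supseteq B$, is nowhere dense. The reduction from $\R^n$ to $\R$ is an easy separate observation (replace $E$ by the union of closures of its coordinate projections). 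Your plan would have to reconstruct the \cite{FKMS} result from scratch and substitute something for the dimension calculus; neither is attempted, so the proof cannot be accepted as written.
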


An alternative way of extending o-minimality is given by structures with
\ominimal open core.
\begin{definition}
The \intro{open core} of $\K$ is the reduct of $\K$ generated by all open
definable subset of $\K^n$ (for all $n \in \N$).
\end{definition}
The main examples of structures with \ominimal open core
are given by dense elementary pairs of \ominimal structures
(see~\cite{vdd-dense}, but see also \cites{DMS, HG11} for other examples); 
their properties have been studied in \cites{DMS, tame:1}.
The dimension will no longer be ``well-behaved'' on such structures (for
instance, the union of two definable sets of dimension $0$ can have
dimension~$1$), but its restriction to the class of sets definable in the open
core will be well-behaved (but see again Conjecture~\ref{conj:dim}).

All the above examples are particular cases of \tame structures.
\begin{definition}\label{def:tame-R}
We call $\Rc$ \intro{\tame{}} if, for every definable discrete set 
$D \subset \R^n$ and
every definable function $g: D \to \R$,  $g(D)$ is nowhere dense (in~$\R$).
\end{definition}
See \cites{BZ08,Hieronymi11} for more examples of \tame expansions of~$\Rbar$.

On the one hand, non-\tame expansions of $\Rbar$ are ``wild'' (from a
model-theoretic point of view; \cf \S\ref{sec:pathology}\eqref{en:RN}).
If $\Rc$ defines the set $\N$, then $\Rc$ is not \tame
(since, if we take $D \coloneqq \N \times \N_{>0}$ and $f(x,y) \coloneqq
x/y$, we have $f(D)$ dense in $\R_{\geq 0}$).
The converse is the main reason of interest in restrained structures: 
\begin{fact}[\cite{hieronymi10}*{Theorem~1.1}]\label{fact:hier-tame}
Either $\Rc$ is \tame, or $\Rc$ defines $\N$.
\end{fact}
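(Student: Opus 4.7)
The plan is to argue by contraposition: assume $\Rc$ does not define $\N$, and show that $\Rc$ is \tame. Equivalently, I start from a definable discrete $D \subseteq \R^n$ and a definable $g: D \to \R$ such that $\cl\Pa{g(D)}$ has nonempty interior, and construct a $\Rc$\hyph definable copy of $\N$. After passing to a definable piece I may assume $g(D)$ is dense in some open interval $I = (a,b) \subseteq \R$.

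First I would reduce to the case in which $D$ is \emph{closed} and discrete in $\R^n$. If $D$ is merely discrete, the set $D^{(1)} \coloneqq \cl(D) \setminus D$ of accumulation points is definable and topologically ``smaller''. Iterating the operation $D \mapsto \cl(D) \setminus D$, or alternatively stratifying $D$ into its Cantor--Bendixson layers (each of which is definable), I extract a definable closed discrete piece on which $g$ still has dense image in some subinterval of $I$; if no such layer has a dense image, then by a countable union argument (using that the derived sequence terminates in a ``tame'' fashion in an arbitrary expansion of $\Rbar$) the original image was already nowhere dense, contradicting the hypothesis. This produces a definable closed discrete $D_0 \subseteq \R^n$ and definable $g_0 : D_0 \to \R$ with $g_0(D_0)$ dense in some interval $I_0$.

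Next I use the ambient geometry of $\R^n$ to well-order $D_0$ of order type $\omega$. Order $D_0$ first by Euclidean norm and break ties by the lexicographic order; since $D_0$ is closed and discrete, each ball $B(0,r)$ meets $D_0$ in a finite set, so this definable order has type exactly $\omega$, and the associated ``successor'' map $h : D_0 \to D_0$ is $\Rc$\hyph definable. Composing with $g_0$, I obtain a definable enumeration $(g_0(d_k))_{k \in \N}$ of a countable dense subset of $I_0$. The rank function $d \mapsto k$ is not yet first-order accessible (finite iteration of $h$ is not, in general, a definable predicate), so the goal of the last step is precisely to definably recover this rank.

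The main obstacle is this last step: converting the ``internal'' well-ordering of $D_0$ into a genuinely $\Rc$\hyph definable bijection with $\N$. Hieronymi's technique, which I would follow, is to exploit the density of $g_0(D_0)$ in $I_0$ to encode iteration geometrically: pick a target point $p \in I_0$ and, for each $d \in D_0$, define $\rho(d)$ to be the infimum of distances $\abs{g_0(d') - p}$ over those $d' \in D_0$ that precede $d$ in the well-order; the sequence $(\rho(d_k))_k$ is strictly decreasing and converges to $0$, so its graph is a definable closed discrete subset of $\R^2$ cofinal in order type $\omega$. From any such definable closed discrete cofinal subset of $\R$ (or $\R^2$), one obtains $\N$ as a definable set by the standard device of counting ``gaps'': the set of $x \in \R$ such that the cardinality of the sequence below $x$ is finite can be captured by first-order conditions on the successor and predecessor maps, yielding $\N$ itself. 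The combinatorial heart of the argument, and the delicate part, is verifying that this rank-extraction can be carried out uniformly in parameters without implicitly requiring $\N$; this is exactly the content of \cite{hieronymi10}*{Theorem~1.1}, which I invoke to close the argument.
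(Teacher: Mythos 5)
The paper states this as a \emph{fact} imported from \cite{hieronymi10} and offers no proof of its own, so there is no internal argument to compare your proposal against; I can only assess the proposal on its merits. It has two defects. The reduction via Cantor--Bendixson derivatives does not work: for a definable discrete $D$, the set $\cl D \setminus D$ need not itself be discrete, the derived hierarchy of a definable set in an arbitrary expansion of $\Rbar$ need not terminate after finitely (or usefully many) steps, and the ``countable union argument'' you appeal to is not first-order. The correct reduction from a definable discrete $D \subseteq \R^n$ to a definable \emph{closed} discrete $M \subset \R$ with the same image is elementary and different --- code $d$ together with the reciprocal of its isolation radius and project --- and is what Fact~\ref{fact:ps-enum} records.

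More seriously, at the decisive step you write that you ``invoke'' \cite{hieronymi10}*{Theorem~1.1} --- the very statement to be proved --- to convert the well-ordered closed discrete set into a definable copy of $\N$, so the proposal is circular. The gap you are skipping is not a routine one: a definable closed discrete unbounded set, with its definable $\omega$-order and successor map, exists already in $\pair{\Rbar, 2^\Z}$ (take $2^\N$), yet that structure is \dminimal, hence restrained, and does not define $\N$; and a definable somewhere-dense, codense set alone does not suffice either (expansions by a dense real closed subfield, see \S\ref{sec:pathology}, have o-minimal open core and are restrained). The entire content of the cited fact is how the \emph{combination} of these two pieces of data --- a closed discrete set whose image under a definable map is somewhere dense --- yields an actual definition of $\N$, and that is exactly what your outline leaves to the reference rather than proving.
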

\begin{corollary}
If the theory of $\Rc$ is decidable, then $\Rc$ is \tame.
\end{corollary}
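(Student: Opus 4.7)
The plan is to argue by contraposition: I will show that if $\Rc$ is not \tame, then the theory of $\Rc$ is undecidable. Combining this with Fact~\ref{fact:hier-tame} gives the corollary immediately.

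First I would apply Fact~\ref{fact:hier-tame}: assuming $\Rc$ is not \tame, that fact yields a definable copy of $\N \subseteq \R$ in the structure $\Rc$. Since $\Rc$ expands the ordered field~$\Rbar$, the restrictions of $+$ and $\cdot$ to this definable set~$\N$ are themselves definable in~$\Rc$, so the full structure $\pair{\N, +, \cdot}$ is interpretable in~$\Rc$ (with parameters, but parameters do not affect decidability of the theory up to a finite extension of the language by constants).

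The next step is to invoke the classical undecidability of first-order arithmetic: by the Gödel--Church--Tarski theorem, the theory of $\pair{\N, +, \cdot}$ is undecidable. Since undecidability transfers along interpretations, the theory of $\Rc$ must then be undecidable as well, contradicting the hypothesis. Hence $\Rc$ is \tame.

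There is no real obstacle here: the entire content is packaged into Fact~\ref{fact:hier-tame} together with the standard fact that any structure interpreting true arithmetic has an undecidable theory. The only minor technicality is to note that defining~$\N$ as a subset of $\R$ automatically yields the definable arithmetic on $\N$ inherited from the ambient field, so the interpretation of $\pair{\N, +, \cdot}$ is immediate and does not require any extra work.
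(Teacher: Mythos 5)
Your overall strategy (contraposition via Fact~\ref{fact:hier-tame} and undecidability of true arithmetic) is exactly the intended one, and the paper does implicitly treat the corollary as immediate from that Fact. However, there is a genuine error in the step where you dismiss the parameter issue. You write that ``parameters do not affect decidability of the theory up to a finite extension of the language by constants,'' but this is false: decidability of $\mathrm{Th}(\Rc)$ does \emph{not} imply decidability of $\mathrm{Th}(\Rc, \bar a)$ when $\bar a$ is a tuple of new constants. The paper itself points to the counterexample in the very next sentence after the corollary: $\Rbar$ is decidable, yet expanding it by a constant naming a suitable (non-computable) real yields an undecidable, still \ominimal, theory. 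So the inference ``$\mathrm{Th}(\Rc)$ decidable $\Rightarrow$ $\mathrm{Th}(\Rc,\bar a)$ decidable'' cannot be used.

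The correct way to close the gap is to upgrade from parametric to parameter-free definability of~$\N$. Suppose $\N = \{x : \Rc \models \phi(x,\bar a)\}$. Consider the parameter-free formula
\[
\chi(\bar y) \; := \; \phi(0,\bar y) \;\wedge\; \forall x\,\bigl(\phi(x,\bar y)\to \phi(x+1,\bar y)\bigr) \;\wedge\; \forall x\,\bigl(\phi(x,\bar y)\to x\ge 0\bigr) \;\wedge\;
\forall x\,\bigl(\phi(x,\bar y)\wedge x>0 \to \phi(x-1,\bar y)\wedge x\ge 1\bigr).
\]
In $\R$ any set of the form $\{x : \phi(x,\bar b)\}$ with $\Rc\models\chi(\bar b)$ must equal $\N$: it contains $\N$ by induction on the $+1$ clause, and a putative $x\notin\N$ in the set descends by the $-1$ clause in finitely many steps into $(0,1)$, which is forbidden. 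Since $\Rc\models\chi(\bar a)$, the set $\N$ is then defined over $\emptyset$ by $\exists\bar y\,(\chi(\bar y)\wedge\phi(x,\bar y))$, and for each arithmetic sentence $\psi$ the parameter-free sentence $\forall\bar y\,(\chi(\bar y)\to\psi^{\phi,\bar y})$ holds in $\Rc$ iff $\N\models\psi$. This gives a computable reduction of true arithmetic to $\mathrm{Th}(\Rc)$, whence undecidability, as you wanted. With this replacement the proof is complete; without it, the argument as written rests on a false premise.
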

Notice that the converse of the above corollary is false: there exist \ominimal
expansions of $\Rbar$ which are not decidable (for instance, it suffices to
add to $\Rbar$ a constant for a suitable real number).

On the other hand, what we call \tame structures do present a tame behaviour:
more precisely, a certain class of sets definable in \tame structures behaves in
a controlled way.

\begin{definition}
Let $X \subseteq \R^n$ be definable (in~$\Rc$).
We say that $X$ is a $\Ds$ set if there exists a definable closed set $Y
\subseteq \R^{n+1}$, such that
$\Pi^{n+1}_n(Y) = X$,
where $\Pi^{n+1}_n$ is the projection onto the first $n$ coordinates.
\end{definition}
Notice that if $X$ is a Boolean combination of definable closed sets, then
$X$ is~$\Ds$.
Moreover, if $\Rc$ is \dminimal, then every definable set is a $\Ds$ set.

\begin{theorem}
\label{mainthm:1}
Let $\Rc$ be \tame.
Let $D \subseteq \R^n$ be $\Ds$ and nonempty.
\Tfae:
\begin{enumerate}
\item $D$ has empty interior;
\item $D$ is nowhere dense;
\item $D$ is null.
\end{enumerate}
Moreover, 
$\dimH(D) = \dim(D)$.
\end{theorem}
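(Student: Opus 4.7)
My plan is to reduce the equivalence of the three conditions to the implication (1)~$\Rightarrow$~(2) together with the equality $\dimH(D) = \dim(D)$. The trivial directions (2)~$\Rightarrow$~(1) and (3)~$\Rightarrow$~(1) require no work: the former holds by definition, while the latter holds because every nonempty open subset of $\R^n$ has positive Lebesgue measure. Once (1)~$\Rightarrow$~(2) and the Hausdorff-dimension equality are in hand, (1)~$\Rightarrow$~(3) follows since a subset of $\R^n$ of Hausdorff dimension strictly less than $n$ is automatically Lebesgue null; measurability and almost-openness should come along because nowhere-dense $\Ds$ sets admit a Borel description as countable-looking unions of closed pieces.

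For (1)~$\Rightarrow$~(2) I would argue by contraposition, aiming for a contradiction with Fact~\ref{fact:hier-tame}: since $\Rc$ is restrained it does not define $\N$, so it suffices to fabricate a definition of $\N$ from the hypothesis that $D$ has empty interior but $\bar D$ contains some open box $U$ (whence both $D \cap U$ and $U \setminus D$ are dense in~$U$). Write $D = \Pi^{n+1}_n(Y)$ with $Y \subseteq \R^{n+1}$ closed and definable; each slice $Y^t := \set{x \in \R^n : (x,t) \in Y}$ is closed and satisfies $Y^t \subseteq D$, so $Y^t$ has empty interior, hence is nowhere dense. Using definable completeness, on $D \cap U$ I can define a section $t(x) := \inf \set{s \geq 0 : (x,s) \in Y}$ after harmlessly truncating $Y$ to a bounded region. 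The level sets of $t$ stratify $D \cap U$ into nowhere-dense closed pieces indexed by $t \in \R$, and a careful analysis along a line segment inside~$U$ --- exploiting that both $D \cap U$ and $U \setminus D$ are dense --- should produce a definable discrete set $S \subseteq \R$ together with a definable function $S \to \R$ whose image is dense in an interval; Fact~\ref{fact:hier-tame} then yields the forbidden definition of~$\N$.

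For $\dimH(D) = \dim(D)$, the bound $\dimH(D) \geq \dim(D)$ is standard: orthogonal projections are $1$-Lipschitz and cannot increase Hausdorff dimension, while any subset of $\R^e$ with nonempty interior has Hausdorff dimension~$e$. For the reverse inequality I would induct on~$n$: if $\dim(D) = n$ both dimensions trivially equal $n$, while if $\dim(D) < n$ the implication (1)~$\Rightarrow$~(2) just proved forces $D$ to be nowhere dense. The $\Ds$ class is closed under coordinate projection (a compact-exhaustion trick converts a projection of a closed set in $\R^{n+1}$ onto $\R^e$ into a projection of a closed set in $\R^{e+1}$), so each coordinate shadow of $D$ is $\Ds$ of dimension at most $\dim(D)$, and the inductive hypothesis controls its Hausdorff dimension; combining with slicing in the transverse direction, where each fibre is itself definable, closed and nowhere-dense, delivers the required upper bound. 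The main obstacle is clearly the $n=1$ case of (1)~$\Rightarrow$~(2), namely extracting a definition of $\N$ from a $\Ds$ subset of $\R$ that is dense in an interval yet has empty interior; once this base case is secured, the higher-dimensional and measure-theoretic conclusions follow by routine induction and slicing.
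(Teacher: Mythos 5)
Your shell structure is right and you have correctly identified the two real tasks: the implication (1)~$\Rightarrow$~(2) and the equality $\dimH(D)=\dim(D)$. But both of your sketches for these tasks have genuine gaps, and one involves an unnecessary detour.

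For (1)~$\Rightarrow$~(2), routing through Fact~\ref{fact:hier-tame} is a dead end, and also superfluous. If your ``careful analysis'' really produced a definable discrete set $S$ with a definable $g\colon S\to\R$ whose image is somewhere dense, you would already have contradicted the definition of \tame{} --- no construction of $\N$ is needed, and indeed the paper never converts to $\N$ anywhere in the proof (this also makes the argument work over arbitrary DC structures, not just~$\R$). The missing step in your sketch is precisely how to pass from a cover of $D$ by a $\K$\hyph indexed family of closed nowhere\hyph dense slices $Y^t$ to something genuinely discrete: the family you describe is indexed by the continuum, so ``level sets of $t$'' do not by themselves furnish a discrete index set. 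This is where Lemma~\ref{lem:nd-countable} (replace each nowhere\hyph dense closed set by the discrete set of endpoints of its complementary intervals, in a strongly uniform way) and the \psenum{} machinery (Fact~\ref{fact:ps-enum}) are doing essential work; the paper's Lemma~\ref{lem:equivalence} packages this, and Lemma~\ref{res:meager-n} lifts it to $\K^n$ by the Kuratowski--Ulam-style induction of \cite{MS99}.

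For the Hausdorff dimension equality, your ``project and slice'' induction does not close. Hausdorff dimension does not satisfy the Fubini-type bound $\dimH(A)\leq\dimH(\pi(A))+\sup_y\dimH(A_y)$ that your inductive step would need; upper bounds of that form require upper box-counting dimension of the base, not Hausdorff dimension, precisely because a Hausdorff cover of the projection gives no control on the sizes of the covering pieces at a fixed scale. The paper avoids this entirely: it first proves the coarse bound $\dimH(Y)\leq\dim(Y)+1$ for $\Ds$ sets via Mattila's projection theorem (Fact~\ref{fact:H-projection}) combined with Lemma~\ref{lem:tame-measure-n}, and then amplifies by the Cartesian\hyph power trick, using $\dimH(X^m)\geq m\,\dimH(X)$ (Fact~\ref{fact:H-additive}) and $\dim(X^m)=m\,\dim(X)$ (Lemma~\ref{lem:dim-additive}) to turn any positive gap into a gap larger than~$1$, contradicting the coarse bound. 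You would need to rebuild something equivalent to this mechanism; the plain induction on $n$ cannot supply it.
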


While our main focus for this article is in expansions of $\Rbar$,
the proofs are easier if we work in sufficiently saturated structures
(see \S\ref{sec:dimension}); moreover, many of our results extend to
definably complete structures.
\begin{definition}
Let $\K$ be an expansion of an ordered field.
$\K$ is \intro{definably complete} (DC) if every definable subset of $\K$ has
a least upper bound in $\K \cup \iset{\pm \infty}$.
\end{definition}
Definably complete structures were introduced in \cite{miller},
and have been studied (among other places) in
\cites{DMS,frat,AF11,FS:1,tame:1,tame:2, tame:3}; see also \S\ref{sec:prelim}
for some properties of DC structures.

$\K$ will always be a DC structure (expanding an ordered field).
We can generalize Definition~\ref{def:tame-R} to arbitrary DC structures.
\begin{definition}
$\K$ is \intro{\tame} if (it is a DC expansion of an ordered field and), 
for every definable discrete set $D \subset \K^n$ and
every definable function $g: D \to \K$, $g(D)$ is nowhere dense (in~$\K$).
\end{definition}


The main result that allows us to prove the above theorems
is the fact that the dimension function is well-behaved on
$\Ds$ set (see Definition~\ref{def:Ds}), provided that $\K$ is \tame.
\begin{theorem}\label{mainthm:2}
Let $\K$ be \tame.
Let $A, A' \subseteq \K^n$ be a $\Ds$ set.
\begin{enumerate}
\item
$\dim(\cl A) = \dim(A)$ (where $\cl A$ is the topological closure of~$A$).
\item\label{en:main-dim-function}
 Let $f: A \to \K^n$ be definable and continuous.
Then, $\dim(f(A)) \leq \dim(A)$.
\item\label{en:main-dim-union} 
$\dim(A \cup A') = \max\Pa{\dim(A), \dim(A')}$.
\item
Let $B \subseteq \K^{n+m}$ be a $\Ds$ set.
Let $A \coloneqq \Pi^{n+m}_n(B)$.
Assume that $\dim(A) = p$ and $\dim(B_\av) = q$ for every $\av \in A$
(where $B_\av$ is the fiber of $B$ over $\av$).
Then, $\dim(B) = p + q$.
\end{enumerate}
\end{theorem}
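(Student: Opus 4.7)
The plan is to first establish a dichotomy for $\Ds$ sets in \tame DC structures: every nonempty $\Ds$ subset of $\K^n$ either has nonempty interior or is nowhere dense. Writing $A = \Pi^{n+1}_n(Y)$ with $Y \subseteq \K^{n+1}$ definable and closed and assuming $A$ is somewhere dense, I would use definable choice for bounded definable closed sets (available in every DC structure) to extract a definable discrete $D \subseteq Y$ projecting into the open box where $A$ is putatively dense, together with a definable function $g \colon D \to \K$ whose image is dense in a nonempty interval; restrainedness then forces $g(D)$ to be nowhere dense, a contradiction, unless $A$ already had interior. The one-dimensional instance of this dichotomy is essentially the DC analogue of Theorem~\ref{mainthm:3}, extended to higher dimensions by induction using the closed witness~$Y$.

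Granted the dichotomy, parts~(1)--(3) follow quickly from the elementary fact that coordinate projections of $\Ds$ sets are $\Ds$. For part~(1), since $\Pi^n_L(\cl{A}) \subseteq \cl{\Pi^n_L(A)}$, the assumption $\dim(\cl{A}) \geq e$ forces $\cl{\Pi^n_L(A)}$ to have interior for some coordinate space $L$; the dichotomy promotes this to $\Pi^n_L(A)$ having interior, so $\dim(A) \geq e$, and the reverse inequality is trivial. For part~(3), a union of two nowhere dense sets is nowhere dense, so if $\Pi^n_L(A \cup A')$ has interior then the dichotomy gives interior to one of $\Pi^n_L(A), \Pi^n_L(A')$. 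For part~(2), the graph of $f$ is closed in $A \times \K^n$ and hence $\Ds$, so $f(A)$ is $\Ds$ as a projection; a definable-choice inverse to $f$ on an open subset of $\Pi^n_L(f(A))$ (where $L$ witnesses $\dim(f(A)) = e$), combined with the dichotomy applied to the pulled-back $\Ds$ subset of $A$, yields a coordinate projection of $A$ onto an $e$-dimensional coordinate space with interior.

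The main obstacle is part~(4). The bound $\dim(B) \leq p + q$ should follow by choosing a coordinate space of optimal dimension for $B$, slicing, and applying parts~(2) and~(3) inductively to the resulting $\Ds$ pieces, with the fiber dimensions controlled by the hypothesis $\dim(B_{\av}) = q$. For $\dim(B) \geq p + q$, pick $L \subseteq \K^n$ of linear dimension $p$ and a nonempty open $U \subseteq \Pi^n_L(A)$. For each $\av \in U$ lift (by definable choice) to $\av^* \in A$; the fiber $B_{\av^*} \subseteq \K^m$ then has dimension $q$, witnessed by some coordinate space $M \subseteq \K^m$ of linear dimension~$q$. Since there are only finitely many such $M$, $U$ splits definably into finitely many pieces according to which $M$ works. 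Applying the dichotomy uniformly in the parameter~$\av$ yields an open subbox $U' \subseteq U$ and a single~$M$ such that $\Pi^m_M(B_{\av^*})$ has interior for every $\av \in U'$. A final definable choice inside the $\Ds$ witness for $B$ produces an open product contained in $\Pi^{n+m}_{L \oplus M}(B)$, giving the lower bound.

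The hardest point to rigorize is the ``parameter-uniform'' form of the dichotomy needed in part~(4): the set of $\av \in U$ for which $\Pi^m_M(B_{\av^*})$ has interior is not obviously $\Ds$, so the pigeonhole step inside $U$ must take place within a class of sets well-behaved under the dichotomy. The likely remedy is to prove the dichotomy in a relativized, parameter-uniform version (for definable families of $\Ds$ sets one dimension higher) and then feed it into the main induction on ambient dimension.
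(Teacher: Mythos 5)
Your opening move---that a nonempty $\Ds$ set in a restrained DC structure has interior or is nowhere dense, deduced from an analogue of Baire category and the reduction of definably meager sets to closures of pseudo\hyph enumerable sets---is exactly the paper's Lemma~\ref{lem:equivalence}/Lemma~\ref{res:meager-n}, and your arguments for parts~(1) and~(3) match the paper's (with the minor caveat that the paper proves~(3) for arbitrary DC structures without restrainedness, via Fact~\ref{fact:meager}).

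The genuine gap is in part~(4), which you flag yourself but which is more serious than you acknowledge, because the machinery you would need to close it is the machinery the paper actually builds. Concretely: (a) your pigeonhole step inside $U$ and your ``parameter-uniform dichotomy'' both require that $\{\av : \dim(B_\av)\ge p\}$ is a $\Ds$ set, which is not at all obvious and is the content of Lemma~\ref{lem:dim-fiber-p}, proved in the paper by an explicit quantifier manipulation and Remark~\ref{rem:formula} (for \dcompact pieces) and then assembled via strong uniformity; (b) even with that, your lower bound argument ``definable choice produces an open product in $\Pi^{n+m}_{L\oplus M}(B)$'' is not right as written, since the interiors of $\Pi^m_M(B_{\av^*})$ vary with $\av$ and no open box is produced; you would instead have to argue by contradiction through the higher-dimensional $\Bad$-set lemma (Lemma~\ref{res:meager-n}\enumref{en:equiv-n-bad}); and (c) your upper bound ``slice and apply (2) and (3) inductively'' glosses over a multi-parameter induction. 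The paper sidesteps all three issues by a different route entirely: it introduces a finitary matroid $\Fcl$ (Definition~\ref{def:Fcl}), proves the exchange property (Lemma~\ref{lem:matroid}\enumref{en:mat-EP}) and the equality of matroid rank with dimension on $\Zdefinable$ $\Ds$ sets in $\omega$-saturated models (Lemma~\ref{lem:rk-dim}); dimension additivity then follows from rank additivity essentially for free, and part~(2) is obtained as a corollary of part~(4) by applying it to $\Gamma(f)$ (Corollary~\ref{cor:dim-function}). Your direct attack on part~(2)---invert $f$ by $\Ds$-choice over an open set in $L$ and ``apply the dichotomy to the pulled-back set''---has the same flaw: a continuous section over an open set does not by itself yield a coordinate projection of $A$ with nonempty interior; you still need additivity to convert it, so (2) cannot really be severed from (4) in the way your sketch suggests.
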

The proof of the above theorem is easier if we work inside  $\omega$-saturated
structures (hence we formulated it for $\K$ instead of~$\Rc$).

Some additional results on \tame structure, while not used in the proof of the
previous theorems, may nevertheless be interesting on their own.
In particular, we have the following further example of tame behaviour:
\begin{theorem}\label{mainthm:Cp}
Let $\K$ be \tame.
Let $f: \K^n \to \K^m$ be definable.
\begin{enumerate}
\item\label{en:Cp-n} 
If $f$ is continuous and $p \in \N$, then
there exists a definable nowhere dense set $D \subset \K^n$, such that
$f$ is $\Cp$ outside $D$.
\item\label{en:Sard}
If $f$ is $\Cone$, then $\Sigma_f$, the set of singular values of~$f$, 
is nowhere dense.
\end{enumerate}
\end{theorem}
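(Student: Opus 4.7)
The plan is to prove~\eqref{en:Cp-n} by induction on~$p$ and to deduce~\eqref{en:Sard} from a rank stratification of the critical set, in both cases leveraging Theorem~\ref{mainthm:2} as the main tool.

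For~\eqref{en:Cp-n}, the base case $p = 1$ is the crux. Let $D_1 \subseteq \K^n$ be the set of points at which $f$ fails to be $\Cone$, i.e.\ where some partial derivative $\partial_i f$ either does not exist or is not continuous. Expressing these conditions via $\liminf$/$\limsup$ of difference quotients over balls of $\K$-parameterised radius presents $D_1$ as a $\Ds$ set; by Theorem~\ref{mainthm:2}(1) it is then nowhere dense iff it has empty interior. If an open box $U \subseteq D_1$ existed, then on $U$ one could definably select, for each $y \in U$, a witness to the failure of $f$ to be $\Cone$ at~$y$ (a sequence of increments along which a difference quotient oscillates or diverges). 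Assembling these witnesses into a definable discrete subset of $\K^{n+1}$ and evaluating the appropriate difference quotient yields a definable function whose image is somewhere dense in $\K$, contradicting the definition of \tame. The inductive step applies the case $p=1$ to each definable continuous first partial derivative of $f$ on $\K^n \setminus D_1$, enlarging $D_1$ by a nowhere dense set off which $f$ is $\mathcal C^2$; iterating gives $\Cp$ for any~$p$.

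For~\eqref{en:Sard}, set $C \coloneqq \set{x \in \K^n : \rk Df(x) < m}$, closed as the vanishing locus of all $m \times m$ minors of~$Df$. The graph of $f\rest_C$ is then closed in $\K^{n+m}$, so $\Sigma_f = f(C)$ is $\Ds$; by Theorem~\ref{mainthm:2}(1) it suffices to show $\dim \Sigma_f < m$. Stratify $C = \bigcup_{k < m} C_k$ with $C_k \coloneqq \set{x : \rk Df(x) = k}$, each locally closed and hence $\Ds$. On $C_k$ the differential $Df$ has constant rank~$k$, so I would invoke a definable constant-rank-type argument (possibly after discarding a nowhere dense subset via part~\eqref{en:Cp-n}) to factor $f\rest_{C_k}$ as a composition of continuous definable maps $C_k \to \K^k \to \K^m$. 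Theorem~\ref{mainthm:2}\eqref{en:main-dim-function} then gives $\dim f(C_k) \leq k$, and Theorem~\ref{mainthm:2}\eqref{en:main-dim-union} yields $\dim \Sigma_f \leq m - 1$, as required.

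The main obstacle in both parts lies in replacing classical analytic tools with $\Ds$-level constructions. In~\eqref{en:Cp-n} one must extract a definable discrete witness to failure of differentiability in order to contradict Definition~\ref{def:tame-R}, without the piecewise smoothness provided by cell decomposition in the \ominimal or \dminimal settings. In~\eqref{en:Sard} one must establish a definable constant-rank factorisation at the $\Ds$ level: the smooth constant-rank theorem does not apply directly, because the rank of $Df$ need not be locally constant in any open neighbourhood (in $\K^n$) of a point of~$C_k$, only along $C_k$ itself.
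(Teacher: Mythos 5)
For part~\eqref{en:Cp-n} your argument and the paper's diverge at the decisive point. The paper reduces to $n=m=1$ and argues as in Lemma~\ref{lem:C-1}\enumref{en:Cp}: form the closure $F$ of $\set{(t,x,z): t\neq 0,\ z=(f(x+t)-f(x))/t}$ in $\K\times\K\times\K_\infty$, set $G\coloneqq F_0$, define the upper and lower derivatives $g_l(x)\coloneqq\min(G_x)$ and $g_r(x)\coloneqq\max(G_x)$, deduce from Lemma~\ref{lem:compact-uniformization} that both are continuous off a nowhere dense set, and then use the \emph{monotonicity} statement, Lemma~\ref{lem:C-1}\enumref{en:monotone}, to rule out $g_l<g_r$ on an open subinterval~$J$ by examining $h\coloneqq f-cx$ for a constant $c$ strictly between them. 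Your sketch has no analogue of this final step: you assert that from an open box $U$ inside the failure set one could extract a definable discrete $D\subset\K^{n+1}$ and a definable difference-quotient map with somewhere dense image, but no construction is given, and I do not see one. For each $y\in U$ the difference quotients oscillate only between $g_l(y)$ and $g_r(y)$, which may be uniformly bounded, so their values need not be dense anywhere in $\K$; and a classical continuous nowhere-differentiable function on $\R$ has failure set equal to all of $\R$ without any attached pseudo-enumerable dense set, so having interior by itself yields no contradiction. The monotonicity decomposition is precisely the lever the paper uses to close this, and your sketch never invokes it. The inductive step would be fine once the base case were secured.

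For part~\eqref{en:Sard} you have diagnosed the gap yourself: on a stratum $C_k$ the rank of $Df$ is constant along $C_k$ but in general not locally constant in $\K^n$, so the constant-rank theorem supplies no definable factorisation $C_k\to\K^k\to\K^m$, and you offer no substitute. The paper's proof does not go through a rank stratification at all: having observed that $\Sigma_f$ is a $\Ds$ set, it appeals to $\Ds$-Choice (Corollary~\ref{cor:inverse}) and refers to \cite{tame:3}*{\S4} for the remaining details. So in both parts your proposal takes a route different from the paper's, and in both it is the key deduction --- not a technical detail --- that is missing.
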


While for expansions of $\Rbar$ we have Fact~\ref{fact:hier-tame}, for general
DC structures we have only a conjecture.
\begin{conjecture}
Either $\K$ is \tame, or it defines a discrete subring (containing~$1$).
\end{conjecture}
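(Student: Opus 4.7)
The plan is to generalize Hieronymi's argument from the real case (Fact~\ref{fact:hier-tame}) to an arbitrary DC structure, producing a definable discrete subring of $\K$ containing $1$ in place of~$\N$. Suppose $\K$ is not \tame, witnessed by a definable discrete set $D \subseteq \K^n$ and a definable function $g: D \to \K$ with $\cl{g(D)}$ containing some open interval; after a definable affine change of coordinates we may assume $[0,1] \subseteq \cl{g(D)}$.

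The first step is to extract from $(D,g)$ a definable discrete subset $E \subseteq \K$ that is unbounded above. The idea is to exploit the discreteness of $D$: for every sufficiently small $\eps > 0$ in $\K$ there are $d_1 \neq d_2 \in D$ with $\abs{g(d_1)-g(d_2)} < \eps$, while definable discreteness of $D$ gives a uniform positive lower bound on $\abs{d_1-d_2}$. Feeding this family of separations into a reciprocal and then thinning by a definable selection to destroy accumulation points yields a definable unbounded discrete $E \subseteq \K$; this is the DC analogue of the first half of Hieronymi's reduction.

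The second and decisive step is to upgrade $E$ to a definable discrete subring $R \ni 1$. The natural attempt is to set $R$ to be the definable closure of $E \cup \{0,1\}$ under the ring operations; the issue is that this closure may not be discrete. One would iterate: whenever an arithmetic combination of elements of the current candidate accumulates, feed the accumulation back into a new pair $(D',g')$ to obtain a non-\tame witness on a smaller scale, and appeal to definable completeness (via an infimum over a definable family of ``scales'') to force the process to stabilize at a genuine definable discrete subring.

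The main obstacle, and presumably the reason the statement is only conjectured, is this final stabilization. In the real case the Archimedean property forces any discrete subring of $\R$ containing~$1$ to equal $\Z$, so Hieronymi's proof essentially concludes as soon as $1$ and a single unbounded discrete witness are available. In a non-Archimedean DC $\K$, closing a discrete set under multiplication can produce accumulation at many scales simultaneously through interactions of infinite and infinitesimal elements, and a genuinely new argument seems needed --- possibly an analogue of Theorem~\ref{mainthm:2} valid beyond the \tame setting, or an extension of the saturation and cell-decomposition techniques of \cites{tame:1,tame:2,tame:3} producing a discrete subring directly from $(D,g)$ without the intermediate set~$E$.
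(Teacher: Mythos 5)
The statement you are being asked about is not a theorem in the paper: the author states it explicitly as a \emph{conjecture}, immediately after remarking that Fact~\ref{fact:hier-tame} (Hieronymi's dichotomy) holds only for expansions of $\Rbar$, and that ``for general DC structures we have only a conjecture.'' So there is no proof in the paper to compare your attempt against, and your proposal --- which itself ends by conceding that the final ``stabilization'' step is missing and is ``presumably the reason the statement is only conjectured'' --- is an honest diagnosis of an open problem, not a proof of it. Your analysis of the obstacle is essentially the right one: Hieronymi's argument over $\R$ leans on the Archimedean property both to recognize the resulting discrete subring as $\Z$ and to keep the closure under ring operations from accumulating, and neither piece of leverage is available in a general definably complete $\K$.

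Two smaller points in your sketch also deserve caution, independent of the main gap. First, a definable discrete set in a DC structure need not have a uniform positive separation between its points (discreteness, even for definable sets, is a local condition; compare the paper's use of definable, closed, discrete, unbounded \pN{} sets $N$ as the good analogue), so the reciprocal-of-separations device for producing an unbounded $E$ already needs more care than in the real case. Second, your proposed ``iterate and take an infimum over definable scales'' step would have to be phrased as a single definable construction, since definable completeness gives suprema and infima of \emph{definable} families only, and the family of scales you generate by hand across iterations is not obviously definable. These are exactly the kinds of issues the cited papers \cites{tame:1,tame:2,tame:3} grapple with when transferring arguments from $\Rbar$ to DC structures, and they confirm that a new idea, not merely bookkeeping, is needed here.
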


\subsection*{Acknowledgements}
Thanks to Philipp L\"ucke, Ben Miller, Chris Miller, and Tamara Servi.

\section{Preliminaries}\label{sec:prelim}
\begin{proviso*}
$\K$ will always be a DC structure expanding a field.
\end{proviso*}
We will freely use the results on definably complete structures contained in
\cite{miller} and \cite{tame:1}*{\S 2--5}.

\begin{notation}
We denote by $\Pi^n_m: \K^n \to \K^m$ the projection onto the first $m$
coordinates.
Given a linear space $L \subseteq \K^n$, we denote by
$\Pi^n_L: \K^n \to L$ the orthogonal projection onto~$L$.
Given $B \subseteq \K^{n+m}$ and $\av \in \K^n$,
we denote by $B_\av \coloneqq \set{\cv \in \K^m: \pair{\av, \cv} \in B}$ 
the corresponding fiber of~$B$.
\end{notation}

In the proofs, we will always assume that there exists a \pN set,
that is a definable, discrete, closed, and unbounded subset of $\K$,
which we will denote by~$N$.
If such a set does not exists, then $\K$ has locally \ominimal open core
\cite{tame:1} (see also \cite{DMS}), 
and one can easily verify each result in that case.

\begin{definition}[\cite{FS:1}]
Let $X \subseteq \K^n$ be definable.
We say that $X$ is \intro{definably meager} (in $\K^n$) if there is no family 
$\Xi \coloneqq \Pa{X_t: t \in \K}$ of closed subsets of $\K^n$,
such that $\Xi$ is definable, increasing, each $X_t$ is nowhere dense,
and $X \subseteq \bigcup_{t \in \K} X_t$.
\end{definition}

In DC structures we have an analogue of Baire Category Theorem.
\begin{fact}
For every $n \in \N$, $\K^n$ is not definably meager (in itself).
\end{fact}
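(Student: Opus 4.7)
This is a definable analogue of the Baire Category Theorem, and the proof closely follows the classical one, with definable completeness playing the role of dependent choice. Suppose for contradiction that $\K^n$ is definably meager, witnessed by a definable increasing family $\Xi = (X_t)_{t \in \K}$ of closed nowhere dense sets whose union is~$\K^n$. Without loss of generality $t \geq 0$ and, replacing $X_t$ by $\bigcap_{s > t} X_s$ if necessary, we may take the family to be right-continuous in~$t$.

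The first ingredient is a definable selection: given a closed ball $\bar B(c, r) \subseteq \K^n$ with $r > 0$ and a parameter $t \in \K$, produce a closed sub-ball $\bar B(c', r') \subseteq \bar B(c, r) \setminus X_t$ with $r' \leq r/2$. Such a sub-ball exists because $X_t$ is closed and nowhere dense, and definable completeness lets us pin down a canonical choice; for example, select $c'$ inside $\bar B(c, r/2)$ to maximise the distance to $X_t \cup \fr \bar B(c, r)$, breaking ties by an iterated coordinate sup.

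The main step is to assemble these selections into a nested definable family $(\bar B(c(t), r(t)))_{t \geq 0}$ satisfying $\bar B(c(t), r(t)) \cap X_t = \emptyset$, $\bar B(c(t'), r(t')) \subseteq \bar B(c(t), r(t))$ for $t \leq t'$, and $r(t) \to 0$ as $t \to \infty$. This is the chief obstacle: since $\N$ is not available inside $\K$, one cannot iterate the selection discretely. Instead, one defines $c(t)$ and $r(t)$ directly as definable functions of~$t$, using suprema over $s \leq t$ of auxiliary quantities to force the nesting. Definable completeness guarantees these suprema exist in $\K^n$ and $\K_{>0}$; this is where the DC hypothesis does the real work, replacing the countable dependent choice used classically.

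Finally, by definable completeness the family $(c(t))_{t \geq 0}$ is Cauchy and converges to a unique point $x^\ast \in \bigcap_{t \geq 0} \bar B(c(t), r(t))$. Since $\bar B(c(t), r(t)) \cap X_t = \emptyset$, we have $x^\ast \notin X_t$ for every $t$, contradicting the assumption that the $X_t$'s cover~$\K^n$.
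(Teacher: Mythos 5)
The paper does not prove this fact; it cites \cite{hier:BCT} and \cite{FS:1}*{Proposition~2.14} for the general case and \cite{tame:2}*{\S6} for the (easier) restrained case. So there is no in-paper proof to compare against. What can be assessed is whether your sketch actually constitutes a proof, and here there is a genuine gap.

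You correctly identify the ``chief obstacle'': replacing the discrete recursion of the classical nested-ball argument with a construction that works when the index set is $\K_{\geq 0}$ rather than $\N$. But the sentence that is supposed to overcome it --- ``one defines $c(t)$ and $r(t)$ directly as definable functions of $t$, using suprema over $s \leq t$ of auxiliary quantities to force the nesting'' --- is not a construction, it is a wish. The classical step is recursive: the ball at stage $t$ must be chosen \emph{inside} the ball at all earlier stages, disjoint from $X_t$. A supremum over $s \leq t$ of some definable quantity does not perform that recursion; before you can write down such a supremum you already need to know what the earlier balls are, and there is no fixed-point or monotone-iteration principle in an arbitrary DC field that would let you define the whole flow $t \mapsto \bar B(c(t),r(t))$ in one shot. (The canonical single-step selector you describe is fine; composing it along $t \in \K$ is precisely what is not available.) Definable completeness gives you suprema of definable subsets of $\K$, not definable solutions to continuous recursions.

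That this is a real difficulty and not a presentational quibble is reflected in the literature the paper points to: the general statement is the content of a separate paper of Hieronymi (cited as \cite{hier:BCT}), and even the restricted restrained case occupies a section of \cite{tame:2}. The known proofs do not proceed by mimicking the nested-ball argument along $\K$; they take substantially different routes (in the restrained case, via pseudo-enumerable sets and the hypothesis that their images are nowhere dense). Your preliminary normalisations (restricting to $t \geq 0$, passing to $\tilde X_t = \bigcap_{s>t} X_s$) are harmless and correct, and your final limit step would be fine if the nested family existed, but as written the proof stops exactly where the real work begins.
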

\begin{proof}
See \cite{hier:BCT} and \cite{FS:1}*{Proposition~2.14}.
Notice that for this article we are interested only in the case when $\K$ is
\tame, where the proof of the above fact is much easier
(see~\cite{tame:2}*{\S6}).
\end{proof}

We have a generalization of $\Ds$ sets to definably complete structures.
\begin{definition}\label{def:Ds}
Let $X \subseteq \K^n$ be definable.
We say that $X$ is a $\Ds$ set if there exists a definable closed set $Y
\subseteq \K^{n+1}$, such that
$\Pi^{n+1}_n(Y) = X$.%
\footnote{In \cite{FS:1} we called $\Ds$ sets ``definably $\Fs$ sets''.
However, since the main focus here is on expansions of $\R$, the previous
nomenclature might be confusing (since a set which is definable and $\Fs$ is
not necessarily $\Ds$); hence, we adopt instead the nomenclature
from~\cite{MS99}, which, while being less suggestive, is also less prone to misunderstanding.}
\end{definition}

\begin{fact}[\cite{FS:1}]\label{fact:meager}
Let $X \subseteq \K^n$ be definable.
Then, $X$ is definably meager iff there exists $Y \subset \K^n$
such that $Y$ is $\Ds$, with empty interior, and $X \subseteq Y$.
\end{fact}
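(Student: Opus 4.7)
The plan is to prove the two directions of the biconditional separately, with most of the subtlety concentrated in the forward implication.

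For the $(\Leftarrow)$ direction, starting from a definable closed $Z \subseteq \K^{n+1}$ with $\Pi^{n+1}_n(Z) = Y$, I would set
\[
X_t \coloneqq \Pi^{n+1}_n\bigl(Z \cap [-t,t]^{n+1}\bigr) \qquad (t > 0).
\]
Each $X_t$ is the continuous image of a definably compact set, hence closed; being contained in $Y$, which has empty interior, it is nowhere dense. The family $(X_t)$ is definable and increasing, and $\bigcup_{t} X_t = Y \supseteq X$, so $X$ is definably meager.

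For the $(\Rightarrow)$ direction, given a definable increasing family $(X_t)$ of closed nowhere dense sets with $X \subseteq \bigcup_t X_t$, I would take $Y \coloneqq \bigcup_t X_t$. The set $Y$ has empty interior by the definable Baire category theorem: no nonempty open ball in $\K^n$ is definably meager, since it is definably homeomorphic to $\K^n$ (e.g.\ via $x \mapsto x/(1 - \norm{x}^2)$ on the unit ball, after translation and rescaling), and $\K^n$ itself is not definably meager.

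The main obstacle will be producing a definable closed $\hat Z \subseteq \K^{n+1}$ with $\Pi^{n+1}_n(\hat Z) = Y$. The naive candidate $\set{(x,t) : x \in X_t}$ is not closed in general: if $t_k \to t_0$ from above and $x_k \in X_{t_k}$, the limit point need not lie in $X_{t_0}$. My fix is to first pass to the enlarged family $\hat X_t \coloneqq \bigcap_{s > t} X_s$, which remains closed, remains nowhere dense (since $\hat X_t \subseteq X_{t+1}$), and still satisfies $\bigcup_t \hat X_t = Y$. Then $\hat Z \coloneqq \set{(x,t) : x \in \hat X_t}$ is closed: given $(x_0, t_0) \notin \hat Z$, pick $s_0 > t_0$ with $x_0 \notin X_{s_0}$, use closedness of $X_{s_0}$ to find an open ball $B$ around $x_0$ disjoint from $X_{s_0}$, and observe that $B \times \set{t : t < s_0}$ is a neighborhood of $(x_0, t_0)$ disjoint from $\hat Z$ (because $\hat X_t \subseteq X_{s_0}$ whenever $t < s_0$). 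This exhibits $Y$ as a $\Ds$ set with empty interior containing $X$, completing the proof.
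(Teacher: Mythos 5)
The paper cites this statement from \cite{FS:1} without giving a proof, so there is no in-text argument to compare against. On its own merits, your argument is correct and complete.

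For the $(\Leftarrow)$ direction, intersecting the closed witness $Z$ with the boxes $[-t,t]^{n+1}$ does produce a definable increasing family of closed (indeed \dcompact) sets whose projections are closed, contained in $Y$, hence nowhere dense, and whose union recovers $Y$; combined with the BCT transfer from a ball to $\K^n$ via a definable homeomorphism, this is exactly what is needed for empty interior in the $(\Rightarrow)$ direction as well. The key technical point, which you correctly identified and repaired, is that $\set{(x,t):x\in X_t}$ need not be closed when the family is merely increasing; your right-regularization $\hat X_t\coloneqq\bigcap_{s>t}X_s$ keeps each fiber closed and nowhere dense (being trapped between $X_t$ and $X_{t+1}$), keeps the family increasing and definable, leaves the union unchanged, and makes the graph $\hat Z$ closed by the $\eps$-argument you gave. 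That closed-graph verification is the one place where a naive attempt fails, and you handled it cleanly. The only cosmetic point worth noting is that the definition of ``definably meager'' quoted in the paper is stated with a stray negation (``there is no family''); your reading of it as ``there exists such a covering family'' is the intended one, as the statement of the fact itself and the non-meagerness of $\K^n$ make clear.
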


We can already prove Theorem~\ref{mainthm:2}\eqref{en:main-dim-union}, 
which holds for arbitrary DC structures.
\begin{lemma}
Let $A, A' \subseteq \K^n$ be $\Ds$.
Then, $\dim(A \cup A') = \max\Pa{\dim(A), \dim(A')}$.
\end{lemma}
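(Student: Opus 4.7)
My plan is to prove both inequalities separately. The bound $\dim(A \cup A') \ge \max\Pa{\dim(A), \dim(A')}$ is immediate from monotonicity: any coordinate projection $\Pi^n_L$ satisfies $\Pi^n_L(A) \subseteq \Pi^n_L(A \cup A')$, so nonempty interior of the former in $L$ yields nonempty interior of the latter, and symmetrically for $A'$. For the reverse inequality, set $e \coloneqq \dim(A \cup A')$ and fix a coordinate subspace $L$ of linear dimension $e$ such that $\Pi^n_L(A) \cup \Pi^n_L(A') = \Pi^n_L(A \cup A')$ has nonempty interior in $L$. The goal is to show that one of $\Pi^n_L(A), \Pi^n_L(A')$ already has nonempty interior in $L$.

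The crucial preliminary claim is that $\Pi^n_L(A)$ and $\Pi^n_L(A')$ are themselves $\Ds$ subsets of $L \cong \K^e$, i.e.\ that the class of $\Ds$ sets is closed under coordinate projections. Given a closed witness $Y \subseteq \K^{n+1}$ for $A$, and letting $z$ denote the coordinates to be projected away (so that $\Pi^n_L(A) = \set{y : \exists z,\ (y,z) \in Y}$), I would introduce $\psi(y) \coloneqq \inf\set{\norm z : (y,z) \in Y}$, with the convention $\psi(y) = +\infty$ when the fiber is empty. This $\psi$ is definable by definable completeness, and each sublevel set $\set{\psi \le c}$ equals the projection of $Y \cap \set{\norm z \le c}$, which is closed and bounded and therefore has closed projection by the standard definable-compactness principle in DC structures. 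Hence $\psi$ is lower semicontinuous, so its epigraph is a closed subset of $\K^{e+1}$ projecting onto the first $e$ coordinates to exactly $\Pi^n_L(A)$, witnessing that $\Pi^n_L(A)$ is $\Ds$.

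With $\Pi^n_L(A)$ and $\Pi^n_L(A')$ recognized as $\Ds$, I would suppose for contradiction that each has empty interior in $L$. Fact~\ref{fact:meager} (applied with $Y$ equal to the set itself) then makes each definably meager in $L$, and two definably meager sets have definably meager union: the termwise union of the two witnessing increasing definable families of closed nowhere dense sets is still such a family, since a finite union of closed nowhere dense sets is closed nowhere dense. Thus $\Pi^n_L(A \cup A')$ would be definably meager, whereas its nonempty interior would itself be a nonempty definably meager open subset of $L$, contradicting the Baire Category Theorem for DC structures. Consequently $\dim(A) \ge e$ or $\dim(A') \ge e$, as required. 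The only real technical obstacle I foresee is the $\Ds$-closure under coordinate projection sketched above; once that is in hand the finish is pure Baire category.
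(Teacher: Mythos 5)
Your proof is correct and follows essentially the same route as the paper's: reduce the reverse inequality, via Fact~\ref{fact:meager} and the definable Baire Category Theorem, to the observations that $\Ds$ sets are closed under coordinate projections and that definably meager sets form an ideal. The only cosmetic difference is that the paper simply invokes Remark~\ref{rem:formula}(2) for $\Ds$-closure under projection, whereas you re-derive it from scratch via the lower semicontinuous distance-to-slice function.
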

\begin{proof}
Let $B \coloneqq A \cup A'$.
It is clear that $\dim(B) \geq \max\Pa{\dim(A), \dim(A')}$.
Assume, for a contradiction, that $\dim(B) > \max\Pa{\dim(A), \dim(A')}$.
\Wlog, $\Pi^n_d(B)$ has nonempty interior.
But $\Pi^n_d(B) = \Pi^n_d(A) \cup \Pi^n_d(B)$; by assumption and 
Fact~\ref{fact:meager}, $\Pi^n_d(A)$ and $\Pi^n_d(A')$ are both definably
meager;
thus, $\Pi^n_d(B)$ is also definably meager, absurd.
\end{proof}

\begin{fact}[\cite{FS:1}*{Lemma~3.10}]\label{fact:Fs-function}
Let $f: \K^n \to \K^m$ be a definable function, such that its graph
$\Gamma(f)$ is a $\Ds$ set.
Then, $\Disc(f)$, the set of discontinuity points of~$f$, is definably meager.
\end{fact}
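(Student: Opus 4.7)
The plan is to exhibit a closed nowhere dense set containing $\Disc(f)$ and then appeal to Fact~\ref{fact:meager}. Fix a definable closed $Y \subseteq \K^{n+m+1}$ with $\Pi^{n+m+1}_{n+m}(Y) = \Gamma(f)$, and for $s > 0$ consider the definable families
\[
Y_s \coloneqq \set{(x,y) \in \K^n \times \K^m : \exists z \in \K,\ (x,y,z) \in Y,\ \norm{y} \leq s,\ \abs{z} \leq s}, \qquad A_s \coloneqq \Pi^{n+m}_n(Y_s).
\]
The first key observation is that $Y_s$ and $A_s$ are closed: each is obtained from $Y$ by intersecting with a set bounded in certain coordinates and then projecting out only bounded coordinates, and the DC analogue of the tube lemma (projections along definably compact factors are closed; see~\cite{miller} and~\cite{tame:1}*{\S 2--5}) applies. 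Since $Y_s \subseteq \Gamma(f)$, it coincides with the graph of $f \rest A_s$, whose image lies in the definably compact box $[-s,s]^m$. I would then run the usual closed-graph-with-compact-codomain argument: if $f \rest A_s$ were discontinuous at some $x_0 \in A_s$, choose a definable curve $t \mapsto x_t$ converging to $x_0$ in $A_s$ with $\norm{f(x_t) - f(x_0)} \geq \eps$ along the curve; by definable compactness of $[-s,s]^m$, $f(x_t)$ has a definable cluster point $y_0$ with $\norm{y_0 - f(x_0)} \geq \eps$; closedness of $Y_s$ forces $(x_0, y_0) \in Y_s$, hence $y_0 = f(x_0)$, a contradiction.

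Next set $G \coloneqq \bigcup_{s > 0} \interior(A_s)$, an open subset of $\K^n$. At every $x_0 \in G$, $f$ coincides on an open neighborhood with the continuous function $f \rest A_s$ for an appropriate $s$, so $f$ is continuous at $x_0$; therefore $\Disc(f) \subseteq \K^n \setminus G$, which is closed. It remains to show $G$ is dense. Suppose for contradiction that some nonempty definable open $U \subseteq \K^n$ is disjoint from $G$. Then for every $s$ the set $U \cap A_s$ is closed in $U$ and has empty interior in $U$. Since $(A_s)_{s > 0}$ is definable and increasing and $\bigcup_s A_s = \K^n$ (each $(x, f(x)) \in \Gamma(f)$ falls into $Y_s$ once $s$ dominates the relevant $\norm{f(x)}$ and $\abs{z}$), the family $(U \cap A_s)_s$ witnesses that $U$ is definably meager in itself. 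Transferring to $\K^n$ via a definable homeomorphism from an open box inside~$U$ contradicts the Baire Category Theorem for DC structures stated after Definition~\ref{def:Ds}. Hence $G$ is dense, $\K^n \setminus G$ is closed with empty interior and therefore trivially $\Ds$, and Fact~\ref{fact:meager} concludes that $\Disc(f)$ is definably meager.

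The main obstacle I expect is the continuity step for $f \rest A_s$: in a general DC structure sequences are unavailable, so the usual ``closed graph plus compact codomain implies continuity'' proof must be recast using definable curves (or definable types) together with the fact that bounded definable curves have definable cluster points in definably compact sets. The DC tube lemma underlying closedness of $Y_s$ and $A_s$ requires the same flavor of argument but is standard in the DC literature cited above, so I would simply invoke it rather than reprove it.
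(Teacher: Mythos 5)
Your argument is correct. Note that the paper itself gives no proof of this statement---it is imported verbatim as a Fact from \cite{FS:1}*{Lemma~3.10}---so there is no in-text argument to compare against; but the approach you take (truncating via the definable increasing filtration $A_s$, showing each restriction $f\rest A_s$ is continuous by a closed-graph-with-d-compact-codomain argument, and then verifying that $\bigcup_s \interior(A_s)$ is dense by the definable Baire Category Theorem) is sound and self-contained modulo the standard DC ingredients you cite. Two small remarks. First, the continuity step can be streamlined so as to avoid definable curves entirely: for $x_0 \in A_s$ and $\eps > 0$, the set $Z \coloneqq \set{(x,y) \in Y_s : \norm{y - f(x_0)} \geq \eps}$ is closed with $y$-coordinate ranging in the d-compact box $[-s,s]^m$, so $\Pi^{n+m}_n(Z)$ is closed and omits $x_0$, yielding a neighborhood of $x_0$ on which $\norm{f - f(x_0)} < \eps$; this sidesteps any appeal to definable choice. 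Second, in the density step, once you have $\Disc(f) \subseteq \K^n \setminus G$ with $\K^n \setminus G$ closed of empty interior, you can conclude directly (a closed set with empty interior is nowhere dense, hence definably meager via the constant family, or via Fact~\ref{fact:meager} taking $Y = \K^n \setminus G$), without further reference to $\Ds$ sets.
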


\begin{fact}[\cite{tame:3}*{\S2}]\label{fact:first-class}
Let $f: \K^n \to \K$ be 
such that $f$ is the point-wise limit of a definable family of continuous
functions. 
Then, $\Disc(f)$ is definably meager.
\end{fact}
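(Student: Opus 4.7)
The plan is to adapt the classical argument that Baire class~$1$ functions have meager discontinuity set, replacing the classical Baire Category Theorem by its definable analogue (the fact immediately preceding). Introduce the oscillation
$$\omega(x_0) \coloneqq \inf_{\delta > 0} \sup\set{\abs{f(x) - f(y)} : x, y \in B(x_0, \delta)},$$
which is definable from~$f$, and for each $\eps > 0$ set $A_\eps \coloneqq \set{x \in \K^n : \omega(x) \geq \eps}$. A routine check shows that $A_\eps$ is closed and that $\Disc(f) = \bigcup_{\eps > 0} A_\eps$. Since the family $(A_{1/t})_{t \geq 1}$ is definable, increasing, and consists of closed sets covering $\Disc(f)$, it will suffice to prove that each $A_\eps$ has empty interior.

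Suppose for contradiction that $A_\eps$ contains a nonempty open ball~$B$. Let $(f_t)_{t \in \K}$ be the definable family of continuous functions with $f_t \to f$ pointwise as $t \to +\infty$, and set
$$E_t \coloneqq \set{x \in B : \abs{f_r(x) - f_s(x)} \leq \eps/5 \text{ for all } r, s \geq t}.$$
Each $E_t$ is closed in~$B$ (an intersection of closed conditions, using continuity of the $f_r, f_s$), the family is increasing in~$t$, and by pointwise convergence every $x \in B$ belongs to some $E_t$; hence $B = \bigcup_{t \in \K} E_t$. As $B$ is nonempty and open, it is not definably meager in itself, so some $E_{t_0}$ has nonempty interior $U \subseteq B$; otherwise the definable increasing cover $(E_t)$ of~$B$ by nowhere dense closed subsets of~$B$ could be extended (by adjoining $\K^n \setminus B$) to witness that $\K^n$ is definably meager, contradicting the Baire property for DC structures.

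Passing to the limit $r \to \infty$ with $s = t_0$ fixed yields $\abs{f(x) - f_{t_0}(x)} \leq \eps/5$ on~$U$. Pick any $x_0 \in U$; by continuity of $f_{t_0}$ at~$x_0$, there is an open neighborhood $V \subseteq U$ of $x_0$ on which $\abs{f_{t_0}(x) - f_{t_0}(x_0)} < \eps/5$. The triangle inequality then gives $\abs{f(x) - f(y)} < 4\eps/5$ for all $x, y \in V$, whence $\omega(x_0) \leq 4\eps/5 < \eps$, contradicting $x_0 \in B \subseteq A_\eps$. The main delicacy is not any single hard step, but the careful verification that each $E_t$ is genuinely definable and closed, and that the definable Baire Category Theorem, as stated for~$\K^n$, can be leveraged on the open subset~$B$.
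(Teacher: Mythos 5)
This fact is cited by the paper from the external reference (\S2 of the \emph{tame:3} paper); no proof appears in the present text, so there is nothing internal to compare against. Your proposal is the natural adaptation of the classical Baire-class-$1$ argument, and it does prove the statement: the oscillation sets $A_\eps$ form a definable increasing family of closed sets covering $\Disc(f)$, so it suffices to show each has empty interior, and the Cauchy sets $E_t$ together with the definable Baire property produce the required contradiction via the four-term triangle inequality. The overall structure is sound and is in substance what one expects the cited source to be doing.

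There is one genuine technical flaw in the way you invoke the definable Baire property. You claim that if each $E_t$ were nowhere dense in $B$, the cover could be ``extended (by adjoining $\K^n \setminus B$) to witness that $\K^n$ is definably meager.'' This does not work: $\K^n \setminus B$ is a closed set with nonempty interior, so $E_t \cup (\K^n \setminus B)$ is never nowhere dense, and the extended family fails to be a definably meager cover. The fix is routine but needs to be said correctly: either pass through a definable homeomorphism $\K^n \cong B$ (for instance $x \mapsto c + r\, x / \sqrt{1 + \abs{x}^2}$ onto the ball of center $c$ and radius $r$, which is definable in any DC expansion of an ordered field), transport the family $(E_t)$ along it, and apply the Baire property to $\K^n$ directly; or appeal to the version of the definable Baire property for definable open sets, which is a standard consequence stated in the references the paper cites for the Baire fact. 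A smaller remark: to be fully precise you should also note that $f$ itself is definable (by definable completeness, $f(x)$ is the unique $z$ with $\inf_{t}\sup_{r,s\geq t}\abs{f_r(x)-f_s(x)}=0$ and $\inf_t\sup_{r\geq t}\abs{f_r(x)-z}=0$), which is what makes $\omega$ and the $A_\eps$ definable in the first place.
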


\begin{definition}
Let $X \subseteq \K^n$.
We say that $X$ is \dcompact if $X$ is definable, closed, and bounded.
\end{definition}

D-compact sets are the definable analogue of compact subsets of $\R^n$
(see \cite{miller}).
their main property is given by the following fact.

\begin{fact}[\cite{miller}]
Let $X$ be \dcompact and $f: X \to \K^m$ be definable and continuous.
Then, $f(X)$ is also \dcompact.
\end{fact}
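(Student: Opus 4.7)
The plan is to verify in turn that $f(X)$ is definable, bounded, and closed. Definability of $f(X) = \set{y \in \K^m : \exists x \in X,\ y = f(x)}$ is immediate from definability of $X$ and $f$. For the remaining two properties, the key tool is the following nested-intersection principle, which I would prove by induction on the ambient dimension from DC: for any $k \in \N$ and any definable family $\set{C_r : r \in \K_{>0}}$ of nonempty \dcompact subsets of $\K^k$ with $C_s \subseteq C_r$ whenever $r \leq s$, the intersection $\bigcap_{r > 0} C_r$ is nonempty.

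For $k = 1$: each $a_r := \min C_r$ exists and lies in $C_r$ since $C_r$ is a nonempty \dcompact subset of $\K$. The map $r \mapsto a_r$ is definable, nondecreasing (by nestedness), and bounded above by any bound for $C_1$, so $a := \sup_{r > 0} a_r$ exists in $\K$ by DC. For any fixed $r_0 > 0$, $a$ is also the supremum of $\set{a_r : r \geq r_0} \subseteq C_{r_0}$; since $C_{r_0}$ is closed, $a \in C_{r_0}$, and varying $r_0$ gives $a \in \bigcap_{r > 0} C_r$. For $k > 1$: apply the base case to the family $\set{\Pi^k_1(C_r)}$ of nonempty \dcompact subsets of $\K$ to produce a common $a \in \K$, then apply the induction hypothesis to the slice family $\set{(C_r)_a : r > 0} \subseteq \K^{k-1}$, which is still nested, definable, and consists of nonempty \dcompact sets.

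Given this principle, boundedness and closedness of $f(X)$ are immediate. For boundedness, assume to the contrary that $f(X)$ is unbounded. Put $h(x) := \max_i \abs{f_i(x)}$, which is continuous and definable on $X$, and set $C_r := \set{x \in X : h(x) \geq r}$. Each $C_r$ is closed in $\K^n$ (by continuity of $h$ and closedness of $X$), bounded (being contained in $X$), and nonempty by hypothesis, and the family is definable and nested. But $\bigcap_{r > 0} C_r = \emptyset$, since no point of $X$ satisfies $h(x) \geq r$ for every $r > 0$, contradicting the principle. For closedness, given $y_0 \in \cl{f(X)}$, set $D_\eps := \set{x \in X : \max_i \abs{f_i(x) - y_{0,i}} \leq \eps}$; each $D_\eps$ is closed (continuity), bounded (in $X$), and nonempty (as $y_0$ is a limit point), hence \dcompact, and the family is definable and nested in~$\eps$. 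The principle produces $x_0 \in \bigcap_{\eps > 0} D_\eps$, which forces $f(x_0) = y_0$, so $y_0 \in f(X)$.

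The only part requiring real work is the nested-intersection principle, but its inductive proof from DC is routine; with it in hand, the two remaining properties of $f(X)$ fall out with essentially no additional effort.
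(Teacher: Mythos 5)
The paper does not prove this Fact but cites it from \cite{miller}, so there is no in-paper argument to compare against directly. Your strategy of reducing the problem to a nested-intersection (``definable Cantor'') principle and then deriving boundedness and closedness of $f(X)$ from it is a reasonable and standard route, and your base case $k=1$ of the nested-intersection induction is correct: the supremum argument using $\min C_r$ and DC is sound.

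There is, however, a genuine gap in the inductive step. You apply the base case to the family $\Pa{\Pi^k_1(C_r)}_{r>0}$, asserting without argument that each $\Pi^k_1(C_r)$ is \dcompact. Boundedness is immediate, but \emph{closedness} of the projection of a d-compact set is not a triviality in a definably complete structure; it is itself a fragment of the theorem you are proving (projection is a definable continuous map, so ``projections of d-compacts are d-compact'' is the case $f = \Pi^k_1$ of the Fact). The obvious attempt to derive closedness of $\Pi^k_1(C)$ for d-compact $C \subseteq \K^k$---approximate a boundary point $y$ by the nested d-compact sets $C \cap \Pa{[y-\eps, y+\eps]\times\K^{k-1}}$ and intersect---invokes precisely the nested-intersection principle in dimension $k$ that you are in the middle of proving, so the induction as written is circular. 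Using $\cll\Pa{\Pi^k_1(C_r)}$ instead does not help, since a common point of those closures need not lie in any $\Pi^k_1(C_r)$, and then the slice families $(C_r)_a$ can be empty. The hole is fixable: one can prove directly, by a separate supremum argument, that $\Pi^{j+1}_j$ of a d-compact set is closed (for $\bar y$ in the closure of the projection, set $m(\eps) := \inf\set{z : \exists \bar x,\ \norm{\bar x - \bar y} \le \eps,\ (\bar x,z)\in C}$, take $m_0 := \sup_\eps m(\eps)$, and show $(\bar y, m_0)\in C$ using closedness of $C$), and then compose to get all $\Pi^k_1$. But you neither give such an argument nor flag the step as requiring one, so as it stands the proof does not go through.
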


We use the following notation: given $m \geq n$,
$A \subseteq \K^{m}$, and a map $\Pi:\K^{m} \to \K^n$, define
$\cPic(A) \coloneqq \K^n \setminus \Pa{\Pi(\K^{m} \setminus A)}$.
The following remark help us to easily show that certain definable sets 
are~$\Ds$, by looking at the formulae defining them.

\begin{remark}\label{rem:formula}
1) Let $\phi(\x)$ be a formula of the form
\[
(Q_1 y_1) \dotsc (Q_n y_m) \psi(x, y_1, \dotsc, y_m), 
\]
where $\psi(\x, \y)$ is some formula, and each $Q_i$ is a
quantifier, either $\exists$ or $\forall$.  
Let $A \subseteq \K^n$ and $B \subseteq \K^{n + m}$ 
be the sets defined by $\phi$ and $\psi$ respectively.
Then, $A = \Pi_1^*( \dotsc \Pi_m^*(B) \dots )$, 
where each $\Pi_i^*$ is either the
orthogonal projection $\Pi: \K^{n+i} \to \K^{n+i-1}$ onto the first $(n+i-1)$
coordinates, or $\cPic$, according to whether $Q_i$ is the quantifier
$\exists$, or the quantifier $\forall$.
\par\noindent
2)
Let $B \subseteq \K^{n+m}$ be definable:
\begin{enumerate}[i)]
\item
If $B$ is \dcompact, then $\Pi(B)$ is \dcompact;
\item 
If $B$ is closed, or more generally a $\Ds$ set, then $\Pi(B)$
is a $\Ds$ set; 
\item
if $B$ is open, then $\Pi(B)$ is also open;
\item if $B$ is closed, then $\cPic(B)$ is also closed;
\item
if $B$ is open, or more
generally a definably $\Gdelta$ set (\ie, the complement of a $\Ds$ set), 
then $\cPic(B)$ is a definably $\Gdelta$ set.
\end{enumerate}
\end{remark}

It is not clear if the union of a definable family of $\Ds$ (resp., definably
meager) sets, indexed by $N$, is $\Ds$ (resp., definably meager).
To prove it we need some additional ``uniformity'' assumptions, which will be
always satisfied in all our applications.
\begin{definition}
Let $\Xi \coloneqq \Pa{X_i: i \in I}$ be a definable family of subsets of
$\K^n$.
We say that $\Xi$ is a \intro{strongly uniform} family of $\Ds$ sets (resp., of
definably meager sets) if there exists a definable family
$\Pa{Y_i: i \in I}$ of closed subsets of $\K^{n+1}$, such that, 
for every $i \in I$,
$X_i = \Pi^{n+1}_n(Y_i)$ (resp., $X_i \subseteq \Pi^{n+1}_n(Y_i)$, and 
$\Pi^{n+1}_n(Y_i)$ has empty interior).
\end{definition}

\begin{remark}
Let $\Xi \coloneqq \Pa{X_n: n \in N}$ be a definable family of subsets of
$\K^n$.
If $\Xi$ is a strongly uniform family of $\Ds$ (resp., definably meager) sets,
then $\bigcup_{n \in N} X_n$ is also a $\Ds$ (resp., definably meager) set.
\end{remark}

\begin{lemma}\label{lem:dim-fiber-0}
Let $A \subseteq \K^n$ be a $\Ds$ set, and $d \coloneqq \dim(A)$.
Then, the set $B \coloneqq \set{\x \in \K^d: \dim(A_{\x}) > 0}$ is a definably
meager $\Ds$ set.
\end{lemma}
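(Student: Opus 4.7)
The lemma has two content parts: that $B$ is $\Ds$ and that $B$ is definably meager. Since any $\Ds$ set with empty interior is definably meager by Fact~\ref{fact:meager} (applied with $Y=B$), it suffices to show that $B$ is $\Ds$ with empty interior. Reordering coordinates, I assume $\dim(A)=d$ is witnessed by $\Pi^n_d(A)$ having nonempty interior, so fibers $A_\x \subseteq \K^{n-d}$ over $\x \in \K^d$ are the natural slicing. Since $\dim(A_\x) > 0$ iff some coordinate projection $\pi_i(A_\x) \subseteq \K$ has interior, decompose
\[
B = \bigcup_{i=d+1}^{n} B_i, \qquad B_i \coloneqq \set{\x \in \K^d : \pi_i(A_\x) \text{ has nonempty interior in } \K},
\]
where $\pi_i$ denotes the projection of $\K^{n-d}$ onto the factor labelled~$i$.

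For $B$ being $\Ds$: the set $\tilde A_i \coloneqq \set{(\x, y) \in \K^{d+1} : y \in \pi_i(A_\x)}$ is the image of $A$ under a coordinate projection, hence $\Ds$. Exhaust $\tilde A_i = \bigcup_{k \in N} C_{i, k}$ with each $C_{i, k}$ \dcompact. The fiberwise definable Baire Category Theorem gives that $(\tilde A_i)_\x$ has interior iff some $(C_{i, k})_\x$ does; hence
\[
B_i = \bigcup_{k \in N} \set{\x \in \K^d : (C_{i, k})_\x \text{ has nonempty interior}},
\]
and each summand is the projection of the locally closed set $\set{(\x, a, b) : a < b,\ [a, b] \subseteq (C_{i, k})_\x}$. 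These summands constitute a strongly uniform family of $\Ds$ sets over~$N$, so by the strong-uniformity remark above, each $B_i$, and hence $B$, is $\Ds$.

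To show $B$ has empty interior, suppose for contradiction some nonempty open $V \subseteq \K^d$ is contained in $B$. Each $V \cap B_i$ is $\Ds$; if all were definably meager, the finite union $V$ would be too, contradicting non-meagerness. Pick $i$ and an open $V' \subseteq V \cap B_i$; set $E \coloneqq \tilde A_i$, so every fiber $E_\x$ with $\x \in V'$ has nonempty interior. It then suffices to deduce that $E$ itself has nonempty interior in $\K^{d+1}$, since then the projection of $A$ onto the coordinate subspace spanned by the first $d$ and the $i$-th coordinates has nonempty interior, forcing $\dim(A) \geq d + 1$ and contradicting $\dim(A) = d$. For each $\x \in V'$, some $(C_{i, k})_\x$ contains a closed interval, and refining $k, M \in N$ one can arrange it in the canonical form $[j/M, (j+1)/M]$ with $j \in \Z \cap [-kM, kM]$. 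This yields the strongly uniform cover
\[
V' = \bigcup_{k, M \in N,\ j \in \Z \cap [-kM, kM]} V_{k, M, j}, \quad V_{k, M, j} \coloneqq \set{\x \in V' : [j/M, (j+1)/M] \subseteq (C_{i, k})_\x},
\]
of the open, hence non-meager, set $V'$ by closed sets. Iterating the definable Baire Category Theorem twice (once per copy of~$N$; the $j$-range is finite for each $(k, M)$) produces indices $(k_0, M_0, j_0)$ and a nonempty open $V'' \subseteq V_{k_0, M_0, j_0}$, from which $V'' \times [j_0/M_0, (j_0 + 1)/M_0] \subseteq C_{i, k_0} \subseteq E$, so $E$ has nonempty interior.

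\textbf{Main obstacle.} The essential step is a Kuratowski--Ulam-type passage from ``every fiber of $E$ has interior over an open base'' to ``$E$ itself has interior'', carried out inside a DC structure lacking a countable dense base. A direct definable-choice selection producing, for each $\x$, a ball $B(c(\x), \rho) \subseteq E_\x$ need not be continuous in~$\x$ in general, blocking the classical topological argument. The remedy is to discretize using an $N$-definable dyadic lattice, replacing the continuous selection by a strongly uniform discrete cover amenable to two applications of the definable Baire Category Theorem; the same discretization also supplies the strong-uniformity witnesses needed in Step~1 to conclude that $B$ is genuinely $\Ds$.
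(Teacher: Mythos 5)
The $\Ds$‑ness half of your argument follows the paper's route (exhaust $A$ by \dcompact pieces, express ``$(C_{i,k})_\x$ has interior'' by a formula and apply Remark~\ref{rem:formula}, then glue via a strongly uniform family over~$N$), and can be made to work after small adjustments: replace the locally closed set by the genuinely closed $\set{(\x,a,b): a+1/m\leq b,\ [a,b]\subseteq (C_{i,k})_\x}$ for $m\in N$, and note that the resulting uniformity is over $N\times N$ rather than $N$ (harmless by Fact~\ref{fact:ps-enum}). The genuine gap is in your empty‑interior step. The paper simply cites the definable Kuratowski--Ulam theorem of \cite{MS99}*{1.5(4)} and \cite{FS:1}*{\S4}; you attempt to reprove it via dyadic discretisation, and the proposed cover
\[
V' \;=\; \bigcup_{k,M\in N,\ j\in \Z\cap[-kM,kM]} V_{k,M,j}
\]
is not a definable family. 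The set $\Z$ is \emph{not} definable in $\K$ precisely in the situations for which this lemma matters: a \tame expansion of $\Rbar$ never defines $\N$ (Fact~\ref{fact:hier-tame}), so the lattice $\set{j/M: j\in\Z}$ is not a definable subset of $\K$, and $(V_{k,M,j})_{k,M,j}$ cannot be fed into the definable Baire Category Theorem, which accepts only \emph{definable} increasing families of closed nowhere dense sets. For general DC $\K$ it is worse still: there is no copy of $\Z$ at all, and replacing $\Z$ by the \pN set $N$ does not help because consecutive elements of $N$ have uncontrolled, unbounded gaps, so $\set{a/M: a\in N}$ is not a mesh. Thus the ``remedy'' in your \textbf{Main obstacle} paragraph does not supply what the classical argument needs.

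The standard fix is definable choice, not discretisation. Apply the DBCT once, to the $N^2$‑indexed family of closed sets
\[
W_{k,m}\coloneqq\set{\x\in V': \exists a\in[-k,k],\ [a,a+1/m]\subseteq (C_{i,k})_\x},
\]
to obtain a nonempty open $V''\subseteq W_{k_0,m_0}$. Then set $a(\x)\coloneqq\lexmin\set{a\in[-k_0,k_0]: [a,a+1/m_0]\subseteq (C_{i,k_0})_\x}$; by Lemma~\ref{lem:compact-uniformization}, $\Disc(a)$ is definably meager, hence $a$ is continuous at some $\x_0\in V''$, and a neighbourhood of $\x_0$ times $[a(\x_0)+\tfrac{1}{4m_0},\,a(\x_0)+\tfrac{3}{4m_0}]$ sits inside $C_{i,k_0}\subseteq E$, giving the contradiction $\dim(A)\geq d+1$. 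This is exactly what the cited Kuratowski--Ulam references package. One further, minor point: the lemma fixes the slicing over the first $d$ coordinates independently of which coordinate projection witnesses $\dim(A)=d$, so your opening ``Reordering coordinates'' silently replaces $B$ by a different set; fortunately your contradiction only uses that no $(d+1)$‑coordinate projection of $A$ has interior, so the reordering assumption is never used and should simply be deleted.
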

\begin{proof}[Sketch of proof]
The fact that $B$ is definably meager is a consequence of the (definable
version of) Kuratowski-Ulam theorem: see \cite{MS99}*{1.5(4)} and
\cite{FS:1}*{\S4} for details. 
If $A$ is closed, then, by Remark~\ref{rem:formula},
$B$ is a $\Ds$ set.
If $A$ is definably $\Ds$, then $A = \bigcup_{i \in N} A(i)$,
for some $\Pa{A(i): i \in N}$ definable family of \dcompact sets.
Then, $B = \bigcup_{i \in N} B(i)$, where $B(i) \coloneqq \set{\x \in \K^d:
  \dim(A(i)_{\x}) > 0}$.
By the previous case, each $A(i)$ is a $\Ds$ set.
Moreover, as it is easy to check, the family $\Pa{B(i): i \in N}$ is a
strongly uniform family of $\Ds$ sets; thus, $B$ is $\Ds$.
\end{proof}

\begin{lemma}\label{lem:dim-fiber-p}
Let $A \subseteq \K^{n+m}$ be a $\Ds$ set.
Let $B^{\geq p} \coloneqq \set{\x \in \K^d: \dim(A_{\x}) \geq p}$.
Then $B^{\geq p}$ is a $\Ds$ set.
\end{lemma}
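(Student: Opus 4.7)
The plan is to reduce the statement to a single ``nonempty interior'' condition via projection, dispose of the closed case by a direct formula count, and then handle the general $\Ds$ case with a Baire category argument combined with strong uniformity.

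First I would reduce to the following special case: for any $\Ds$ set $C \subseteq \K^{n+m}$, the set $E \coloneqq \set{\x \in \K^n : \interior(C_\x) \neq \emptyset}$ is $\Ds$. To perform the reduction, observe that $\dim(A_\x) \geq p$ holds iff there is some coordinate subspace $L \subseteq \K^m$ of linear dimension $p$ with $\Pi^m_L(A_\x)$ having nonempty interior in $L$; since there are only finitely many such $L$, and for each $L$ we have $\Pi^m_L(A_\x) = (\tilde A_L)_\x$ where $\tilde A_L$ is obtained from $A$ by projecting out the coordinates orthogonal to $L$ (and is $\Ds$ by Remark~\ref{rem:formula}), $B^{\geq p}$ is a finite union of sets of the reduced form.

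Next I would dispose of the case where $C$ is closed. Here $\x \in E$ can be written as
\[
\exists \y \in \K^m \; \exists r > 0 \; \forall \z \in \K^m \bigl( \abs{\z - \y} \geq r \;\vee\; (\x,\z) \in C \bigr),
\]
whose matrix is closed in $(\x, \y, r, \z)$. Applying Remark~\ref{rem:formula}, the universal quantifier over $\z$ produces a closed set via $\cPic$, and the two existential quantifiers over $\y$ and $r$ are projections that take a closed set to a $\Ds$ set; uniformity in parameters is transparent from the formula.

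For the general case, write $C = \bigcup_{i \in N} C(i)$ as an increasing union of \dcompact sets (possible by definition of $\Ds$), and set $E(i) \coloneqq \set{\x : \interior(C(i)_\x) \neq \emptyset}$. By the closed case, each $E(i)$ is $\Ds$, and the closed witnesses depend definably on $i$, so $(E(i): i \in N)$ is a strongly uniform family of $\Ds$ sets. The central claim is $E = \bigcup_{i \in N} E(i)$: the inclusion $\supseteq$ is trivial; for $\subseteq$, if $\x \in E$ pick an open ball $U \subseteq C_\x$. Then $U = \bigcup_{i \in N} (U \cap C(i)_\x)$ is an increasing union of relatively closed subsets, so by the definable Baire Category Theorem (applied inside $U$, which is not definably meager as an open subset of $\K^m$) some $U \cap C(i)_\x$ has nonempty interior, giving $\x \in E(i)$. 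The Remark preceding Lemma~\ref{lem:dim-fiber-0} then yields $E = \bigcup_i E(i)$ is~$\Ds$.

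I expect the main obstacle to be bookkeeping around strong uniformity: one must check that the closed witness for each $E(i)$ produced in the closed case really is given by a definable family indexed by $i \in N$, rather than merely existing pointwise. The Baire step itself is unproblematic given the available machinery, but combining it with the strongly uniform packaging to invoke the ``union over $N$'' remark is the place where care is needed.
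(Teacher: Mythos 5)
Your proposal is correct and follows essentially the same route as the paper: reduce $B^{\geq p}$ to finitely many ``projected fiber has nonempty interior'' conditions via the $p$-dimensional coordinate subspaces, handle the d-compact (or closed) case by a quantifier-prefix formula together with Remark~\ref{rem:formula}, and then treat general $\Ds$ by writing $A$ as an increasing $N$-indexed union of d-compact sets and invoking the strong-uniformity remark. The paper compresses this second stage into ``proceed as in the proof of Lemma~\ref{lem:dim-fiber-0}'' and leaves the Baire-category step (needed to show the inclusion $E \subseteq \bigcup_{i} E(i)$, \ie that an interior-containing fiber already has some $C(i)_\x$ with interior) implicit, so your surfacing it as the load-bearing point is accurate.
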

\begin{proof}
Assume that $A$ is \dcompact.
Then, $B^{\geq p}$ is the union of finitely many sets, which, after a
permutation of coordinates, are of the form
\[
B(i) \coloneqq \set{\bv \in \K^n: \exists \cv \in \K^p\ \exists r > 0\ 
\forall \y \in \K^p\ \Pa{\y \notin B(\cv; r) \vel \exists \z \in \K^{n-p} 
\pair{\bv, \y, \z} \in A}}.
\]
By Remark~\ref{rem:formula}, each $B(i)$ is a $\Ds$ set,
and thus $B^{\geq p}$ is also a $\Ds$ set.

If instead $A$ is a $\Ds$ set, proceed as in
the proof of Lemma~\ref{lem:dim-fiber-0}.
\end{proof}

\begin{lemma}\label{lem:compact-uniformization}
Let $A \subseteq \K^n \times \K^p$ be a \dcompact set.
Let $B \coloneqq \Pi^{n+p}_n(A)$.
Define
$f: B \to \K^p$, $f(x) \coloneqq \lexmin(A_x)$.
Then, $\Gamma(f)$ is a definably $\Gdelta$ set, and $\Disc(f)$ is definably meager.
%
\end{lemma}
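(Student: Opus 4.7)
Plan. The statement splits into two assertions: $\Gamma(f)$ is definably $\Gdelta$, and $\Disc(f)$ is definably meager. I would handle them separately.

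For the $\Gdelta$ claim, I would write
$$\K^{n+p}\setminus\Gamma(f) \;=\; (\K^{n+p}\setminus A) \cup E, \qquad E := \{(x,y) \in \K^{n+p} : \exists\,y' \in A_x,\ y' <_{\mathrm{lex}} y\},$$
and show that the complement is $\Ds$. The first summand is open, hence $\Ds$. For the second, $E$ is the projection onto the first $n+p$ coordinates of $C := \{(x,y,y') \in \K^{n+2p} : (x,y') \in A \text{ and } y' <_{\mathrm{lex}} y\}$. The relation $y' <_{\mathrm{lex}} y$ unfolds as a finite disjunction (over $i \in \{1,\dots,p\}$) of conjunctions combining the equalities $y'_j = y_j$ for $j<i$ with the single strict inequality $y'_i < y_i$; each such conjunction is locally closed, hence $\Ds$. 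Thus the relation itself defines a $\Ds$ set; intersecting with the closed set $\{(x,y') \in A\}$ shows $C$ is $\Ds$, and so is its projection $E$ by Remark~\ref{rem:formula}.

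For the meagerness claim, I would set $F := \cl(\Gamma(f))$; since $\Gamma(f) \subseteq A$ and $A$ is closed, $F \subseteq A$, so $F$ is \dcompact. For each $x_0 \in B$ the fiber $F_{x_0}$ coincides with the set of all cluster values of $f$ at $x_0$: any subsequential limit of $f(x_k)$ for $x_k \to x_0$ in $B$ lies in $A_{x_0}$ hence in $F_{x_0}$, and conversely every $y_0 \in F_{x_0}$ arises as such a limit by \dcompact ness of $A$. Therefore $f$ is continuous at $x_0$ iff $F_{x_0} = \{f(x_0)\}$, and so $\Disc(f) = \Pi^{n+p}_n(F \setminus \Gamma(f))$. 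From the first part, $\Gamma(f)$ is $\Gdelta$ in $\K^{n+p}$ (hence in $F$), and it is dense in $F$ by construction; the definable Baire category theorem applied in the \dcompact set $F$ then gives that $F \setminus \Gamma(f)$ is definably meager in $F$, and since $F$ is closed, also in $\K^{n+p}$.

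The main obstacle is transferring this meagerness through the projection. I would invoke the definable Kuratowski--Ulam theorem (\cite{FS:1}*{\S4}, cf.\ \cite{MS99}) applied to the definably meager $\Ds$ set $F\setminus\Gamma(f)\subseteq \K^n \times \K^p$: outside a definably meager set of $x$, the fiber $F_x \setminus \{f(x)\}$ is nowhere dense in $\K^p$. To bridge from ``nowhere dense'' to ``empty fiber'', I would argue by contradiction: if $\Disc(f)$ had nonempty interior $U \subseteq B$, one could manufacture a definable selection $g: U' \to \K^p$ with $g(x) \in F_x \setminus \{f(x)\}$ on a dense open $U' \subseteq U$ by reapplying the same uniformization construction to the locally \dcompact remainder $(F \cap (U \times \K^p)) \setminus \Gamma(f)$; the graph of $g$ would supply a second sheet of $F \setminus \Gamma(f)$ over $U'$, contradicting its definable meagerness via a further application of Kuratowski--Ulam. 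Hence $\Disc(f)$ has empty interior, and since it is a $\Ds$ set (projection of a $\Ds$), Fact~\ref{fact:meager} yields that it is definably meager.
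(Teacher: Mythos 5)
Your argument for the $\Gdelta$ claim is correct and is the natural complement (literally) of the paper's: the paper writes $\Gamma(f)$ directly as $A\cap\cPic\set{\pair{\x,y,y'}:y>y'\to\pair{\x,y'}\notin A}$ and invokes Remark~\ref{rem:formula}, whereas you verify that $\K^{n+p}\setminus\Gamma(f)$ is a finite union of projections of (locally) closed sets, hence $\Ds$. Both work; yours spells out the lexicographic decomposition, which the paper suppresses by restricting to $p=1$.

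The meagerness half is where there is a genuine gap. The paper simply cites \cite{DMS}*{Lemma~2.8(1)}; you instead try to argue directly, and the setup is fine up to the point where $F\setminus\Gamma(f)$ is shown to be definably meager and $\Disc(f)=\Pi^{n+p}_n(F\setminus\Gamma(f))$. The problem is the transfer through the projection. Kuratowski--Ulam for a meager set $X\subseteq\K^n\times\K^p$ yields only that, off a meager set of $x$, the fiber $X_x$ is meager in $\K^p$; it says nothing about the fiber being \emph{empty}. Indeed, the implication ``$X$ meager with closed fibers $\Rightarrow\Pi(X)$ meager'' is simply false: take $X=\K^n\times\iset{0}\subseteq\K^n\times\K$, which is closed, nowhere dense, has compact fibers, and projects onto all of $\K^n$. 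Your proposed contradiction --- manufacture a second selection $g$ over an open $U'\subseteq\Disc(f)$ with $\Gamma(g)\subseteq F\setminus\Gamma(f)$ --- does not produce one either: a meager set in $\K^{n+p}$ can perfectly well contain two disjoint continuous graphs over an open set (\eg $\K^n\times\iset{0,1}$), so adding a ``second sheet'' contradicts nothing, and no further application of Kuratowski--Ulam rescues this.

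There is also a technical circularity concern in the bridge. The set $(F\cap(U\times\K^p))\setminus\Gamma(f)$ is not locally closed ($\Gamma(f)$ is $\Gdelta$ but in general not closed), so the $\lexmin$ construction of the present lemma does not apply to it directly; the uniformization you would actually need is Lemma~\ref{lem:Fs-uniformization} ($\Ds$-Uniformization), which appears later, requires the \tame hypothesis (not in force in this preliminaries section), and is itself proved \emph{using} Lemma~\ref{lem:compact-uniformization}. One could sidestep this by selecting $g(x)\coloneqq\lexmin$ (or $\lexmax$) of the d-compact $F_x$ rather than of the non-closed remainder, but that only fixes issue one, not the missing contradiction. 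A correct self-contained route is to observe that $f$ is a (lexicographic analogue of a) lower semicontinuous function --- $(x_k,f(x_k))\in A$ and $A$ closed force $\liminf f(x_k)\geq_{\mathrm{lex}} f(x_0)$ as $x_k\to x_0$ --- so that $\Disc(f)=\bigcup_t\set{x:\omega_f(x)\geq t}$ with each $\set{\omega_f\geq t}$ closed, and then show each such level set has empty interior using semicontinuity; alternatively one can cast $f$ as a pointwise limit of a definable family of continuous functions and appeal to Fact~\ref{fact:first-class}. Either way, some input beyond Baire category plus Kuratowski--Ulam is needed.
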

\begin{proof}
Let $\Pi \coloneqq \Pi^{n+p}_n$.
For simplicity, we will treat only the case $p = 1$.
Notice that
\[
\Gamma(f) = A \cap \cPic \set{\pair{\x, y, y'}: 
y > y' \rightarrow \pair{\x, y'} \notin A}.
\]
The fact that $\Gamma(f)$ is definably $\Gdelta$ follows from
Remark~\ref{rem:formula}.
$\Disc(f)$ is definably meager by \cite{DMS}*{Lemma~2.8(1)}.
\end{proof}

Notice that there are some compact subsets of $\R^2$, such that, for the
corresponding function $f$ as in the above lemma, 
$\Gamma(f)$ is a $\Gdelta$, but not an $\mathcal F_\sigma$ set.

\begin{definition}\cite{tame:2}*{\S4}
A definable set $X$ is \intro{at most \psenum{}} if there exists a definable
discrete set $D$ and a definable surjective map $g: D \to X$.
\end{definition}
Hence, $\K$ is \tame iff every at most pseudo-enumerable subset of $\K$ is
nowhere dense.

\begin{fact}[\cite{tame:2}*{\S5}]\label{fact:ps-enum}
\begin{enumerate}\item 
Let $A \subseteq \K^n$ be at most \psenum.  
Then, there exist $M \subset \K$
definable, closed and discrete, and $g: M \to \K^n$ definable, such that 
$A = g(M)$.
\item
Let $A$, $A'$ be at most \psenum subsets of $\K^n$.
Then, $A \cup A'$ and $A \times A'$ are also at most \psenum.
\item
Let $A$ be at most \psenum and $B \subseteq A$ be definable.
Then, $B$ is at most \psenum.
\end{enumerate}
\end{fact}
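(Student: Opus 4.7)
The plan is to dispose of (2) and (3) as essentially formal consequences of the definition and to concentrate on (1), which carries the substantive content. For (3), if $g \colon D \twoheadrightarrow A$ is a definable surjection from a definable discrete set, then $g^{-1}(B) \subseteq D$ is a definable subset of a discrete set, hence itself discrete, and $g$ restricts to a definable surjection onto $B$. For (2), $A \cup A'$ is witnessed by the definable discrete disjoint union $(D \times \iset 0) \cup (D' \times \iset 1)$ with the piecewise map to $A$ or $A'$, and $A \times A'$ by $D \times D'$ with the product map $(d, d') \mapsto (g(d), g'(d'))$.

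For (1), let $g \colon D \twoheadrightarrow A$ be a definable surjection with $D \subseteq \K^k$ definable and discrete; the goal is a definable closed discrete $M \subseteq \K$ together with a definable surjection $M \twoheadrightarrow A$. The idea is to use the pseudo-$\N$ set $N$ to enumerate $D$ uniformly. By DC, set $r(x) \coloneqq \sup\set{\rho > 0 : B(x, \rho) \cap D = \iset x} > 0$ for $x \in D$, and layer
\[
D = \bigcup_{n \in N} D_n, \qquad D_n \coloneqq \set{x \in D : r(x) \geq 1/n \et \norm x \leq n}.
\]
Each $D_n$ is bounded and $1/n$-separated, hence definably finite with a definable cardinality $c(n) \in N$ and a uniformly definable enumeration. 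Gluing these enumerations across $n \in N$ produces a definable bijection between $D$ and a definable closed discrete subset of $N \times N \subseteq \K^2$; composing with a definable pairing $N \times N \hookrightarrow N$ yields the required $M \subseteq \K$, and $g$ transports to a definable surjection $M \twoheadrightarrow A$.

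The main obstacle is the uniform definability of the finite enumerations of the $D_n$ and their gluing along $N$. This rests on two standard features of the DC toolkit recalled in \S\ref{sec:prelim} (and based on \cite{miller}, \cite{tame:1}): bounded uniformly separated definable sets are definably finite with cardinality in $N$, and definable $N$-recursion is available in DC structures admitting a pseudo-$\N$ set. Once these are in hand, the construction above yields the claimed strengthening and the remaining two parts follow immediately.
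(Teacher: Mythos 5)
This statement appears in the paper as a \emph{Fact} cited from \cite{tame:2}*{\S5}; the paper itself gives no proof, so there is nothing internal to compare your attempt against --- I can only assess it on its own merits.

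Parts \romref{en:mat-incr}-like bookkeeping aside, your dispatch of (2) and (3) is correct: disjoint unions and products of definable discrete sets are discrete, and a definable subset of a discrete set is discrete, so the obvious witnessing surjections do the job.

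Part (1) is where the real content is, and here your proposal has a genuine gap. It leans on two claims presented as ``standard features of the DC toolkit recalled in \S\ref{sec:prelim}'': (i) that a bounded, uniformly separated definable set is ``definably finite with a definable cardinality $c(n)\in N$ and a uniformly definable enumeration'', and (ii) that ``definable $N$-recursion'' is available. Neither appears in \S\ref{sec:prelim}, nor does either drop out of what is there (definable meagerness, the Baire property, $\Ds$ sets, $d$-compactness, compact uniformization). Showing that a $d$-compact discrete set carries a definable bijection onto an initial segment of $N$, \emph{uniformly in a definable family}, together with a definable injective pairing $N\times N\hookrightarrow\K$ whose image is closed and discrete, is precisely the nontrivial work whose proof the paper delegates to \cite{tame:2}. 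Note in particular that $N$ is just \emph{some} definable closed discrete unbounded subset of $\K$, with no assumed arithmetic structure; so a pairing $N\times N\hookrightarrow N$ is not free, and is typically built using the very recursion principle you are invoking it to implement. The overall shape of your argument (layer $D$ by separation scale and radius, enumerate each layer, glue along $N$) is plausible and likely close to the intended one, but as written it presumes the conclusion's hardest ingredients.
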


\begin{lemma}\label{lem:nd-countable}
Let $X \subseteq \K$ be nowhere dense.
Then, there exists a set $Y \subset \K$ discrete, definable, and such that
$\cl X = \cl Y$.
Moreover, the choice of $Y$ can be made in a strongly uniform way:
that is, if $Z \subset \K^{n+1}$ is definable, and for every $t \in \K^n$,
$Z_t$ is nowhere dense, then there exists $W \subset \K^{n+1}$ definable,
such that, for every $t \in \K^n$, $W_t$ is discrete, and $Z_t \subseteq \cll(W_t)$.
\end{lemma}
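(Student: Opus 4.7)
The plan is to take $Y$ to be the set of isolated points of $C \coloneqq \cl X$, parametrized by the scale at which each point is isolated. Since $X$ is nowhere dense, $C$ has empty interior, so its complement $U \coloneqq \K \setminus C$ is open and dense. For each $m$ in the pseudo\hyph$\N$ set $N$ I would put
\[
Y_m \coloneqq \set{y \in C : (y - 1/m,\, y+1/m) \cap C = \iset{y}},
\]
and set $Y \coloneqq \bigcup_{m \in N} Y_m$, which is definable (by a single formula with $m$ existentially quantified).

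First I would verify that $Y$ is itself discrete: if $y \in Y$, pick $m$ with $y \in Y_m$; then $(y - 1/m, y+1/m) \cap C = \iset{y}$, and since $Y \subseteq C$ this same interval isolates $y$ inside $Y$. Next I would show $\cl Y = C$, which reduces to proving that the isolated points of $C$ are dense in $C$, \ie{} that $C$ is scattered. For the strongly uniform version I would simply apply the same definition fiberwise: put
\[
W \coloneqq \set{\pair{t,y} \in \K^{n+1} : \exists m \in N,\ (y - 1/m,\, y+1/m) \cap \cl(Z_t) = \iset{y}},
\]
so that $W_t$ is the set of isolated points of $\cl Z_t$; this is discrete by the previous paragraph applied to $Z_t$, and $Z_t \subseteq \cl Z_t = \cl(W_t)$ follows from the main assertion applied to $Z_t$.

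The main obstacle is establishing that $C$ is scattered. In a general DC structure a closed nowhere dense set can fail to be scattered (any Cantor\hyph like perfect set would be a counterexample), so this step must invoke nontrivial input. I expect it to be here that the standing \tame hypothesis on $\K$ enters: combining Fact~\ref{fact:ps-enum} with the assumption that $\K$ is \tame, one argues that a closed nowhere dense definable subset of $\K$ is at most \psenum and that no nonempty perfect set is \psenum (a perfect set has the cardinality of the continuum, whereas a \psenum set is governed by a definable discrete index). Ruling out a perfect subset of $C$ in this way forces $C$ to be scattered; at that point $\cl Y = C$ follows, and the same argument goes through uniformly in the parameter $t$.
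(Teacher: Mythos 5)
Your construction takes $Y$ to be the set of \emph{isolated points} of $C = \cl X$, parametrized by the scale of isolation. The paper's proof is different: it takes $Y$ to be the set of \emph{endpoints of $\K \setminus X$}, pointing to \cite{tame:2}*{\S2} for the details. These two sets do not coincide. A point $y \in C$ is an endpoint as soon as some interval $(y, y+\eps)$ or $(y-\eps, y)$ lies entirely in $\K \setminus C$, i.e.\ there is a gap on \emph{one} side; whereas $y$ is isolated in $C$ only if there are gaps on \emph{both} sides. Consequently your $Y$ omits every two-sided accumulation point of $C$ that is a one-sided endpoint, and in particular $Y$ is empty whenever $C$ is densely-in-itself, in which case $\cl Y = \emptyset \neq C$. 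The endpoint set, by contrast, is dense in $C$ whenever $C$ is closed, nowhere dense and nonempty, by a purely topological argument that does not mention scatteredness.

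You do flag the obstacle — proving that $C$ is scattered — but the argument you sketch for it has a genuine gap. The step ``a closed nowhere dense definable subset of $\K$ is at most pseudo-enumerable'' is not a consequence of Fact~\ref{fact:ps-enum} nor of the \tame hypothesis, and it is not established anywhere in the paper before this point; in fact it is essentially a rephrasing of the conclusion of the lemma (a pseudo-enumerable set is the image of a definable discrete set), so assuming it is close to circular. The auxiliary step ``no nonempty perfect set is pseudo-enumerable because of cardinality'' is special to $\K=\R$; it has no analogue in a general definably complete field, where neither ``perfect set'' in the Polish sense nor cardinality arguments are available, yet the lemma is stated (and used from \S\ref{sec:prelim} onward) for arbitrary DC $\K$ satisfying the standing pseudo-$\N$ proviso. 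The paper avoids all of this by choosing a different set $Y$ that is dense in $C$ for structural rather than cardinality reasons.
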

\begin{proof}
\Wlog, we can assume that $X$ is closed.
Take $Y$ to be the set of endpoints of $\K \setminus X$ (see
\cite{tame:2}*{\S2}). 
\end{proof}

\subsection{Proof of Theorem~\ref{mainthm:3}}
The proof of Theorem~\ref{mainthm:3} is surprisingly straightforward.
($\romref{en:main3-Haus} \Rightarrow \romref{en:main3-null}$) is clear
and ($\romref{en:main3-null} \Rightarrow \romref{en:main3-imin}$) is easy (see
\cite{miller05}*{3.1}).

($\romref{en:main3-imin} \Rightarrow \romref{en:main3-Haus}$): let $A \subset
\R$ be definable and with empty interior; we have to show that $A$ has
Hausdorff dimension~$0$.  Since we assumed that $\Rcal$ is \iminimal, we can
substitute $A$ with its closure, and thus, \wloG, $A$~is closed.  By
Lemma~\ref{lem:nd-countable}, there is a countable definable set $Y \subseteq
A$ such that $A = \cl Y$.  Assume, for a contradiction, that $\dimH(A) > 0$.
By \cite{EM01}*{Lemma~1}, there exists a linear function $T: \R^n \to \R$, such
that $T(A^n)$ has interior (in~$\R$).
Since $Y$ is dense in $A$ and $T$ is continuous, $Z \coloneqq T(Y^n)$ is
somewhere dense; since $\Rcal$ is \iminimal, $Z$ has nonempty interior,
contradicting the fact that $Z$ is countable.
\hfill\qedsymbol

\section[Meager and nowhere dense sets]{Meager and nowhere dense sets in \tame structures}

\begin{lemma}\label{lem:equivalence}
$\K$ is \tame iff, for every definably meager set $X \subset \K$,
we have that $X$~is nowhere dense.
\end{lemma}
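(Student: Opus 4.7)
The plan is to prove the two directions separately, with the backward direction being the shorter. Assume first that every definably meager $X \subset \K$ is nowhere dense; let $D \subseteq \K^n$ be definable and discrete, and $g: D \to \K$ definable. The image $g(D)$ is at most \psenum, so by Fact~\ref{fact:ps-enum}(1) I may reduce to the case where $D \subseteq \K$ is closed and discrete. Set $B_t \coloneqq g(D \cap [-t, t])$ for $t \in \K$. Each $D \cap [-t, t]$ is a bounded closed discrete subset of $\K$, hence finite, so every $B_t$ is finite and therefore closed and nowhere dense; the family $(B_t)_{t \in \K}$ is definable and increasing with $\bigcup_{t \in \K} B_t = g(D)$. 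Thus $g(D)$ is definably meager, and the hypothesis yields that $g(D)$ is nowhere dense.

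For the forward direction, assume $\K$ is \tame and let $X \subseteq \K$ be definably meager. By Fact~\ref{fact:meager} I may replace $X$ by a $\Ds$ set $Y \supseteq X$ with empty interior and show $Y$ itself is nowhere dense. Write $Y = \Pi^2_1(Z)$ with $Z \subseteq \K^2$ closed, and set $Y(n) \coloneqq \Pi^2_1\Pa{Z \cap [-n, n]^2}$ for $n \in N$. Each $Y(n)$ is \dcompact, so closed, and $Y = \bigcup_{n \in N} Y(n)$ because $N$ is unbounded in $\K$. Since $Y$ has empty interior and each $Y(n) \subseteq Y$ is closed, every $Y(n)$ is nowhere dense.

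The key step, which I expect to be the principal obstacle, is to ``glue'' this family of closed nowhere dense sets into a single discrete definable set to which the hypothesis that $\K$ is \tame can be applied. The strongly uniform form of Lemma~\ref{lem:nd-countable}, applied to the family $(Y(n))_{n \in N}$, supplies a definable $W \subseteq \K^2$ with every fiber $W_n$ discrete and $Y(n) \subseteq \cll(W_n)$. Then $\tilde D \coloneqq \set{\pair{n, w}: n \in N,\ w \in W_n}$ is a discrete definable subset of $\K^2$: every point is isolated by the product of an open interval around $n$ meeting $N$ only at $n$ and an open interval around $w$ meeting $W_n$ only at $w$. Since $\K$ is \tame, the image $\bigcup_{n \in N} W_n$ of $\tilde D$ under the definable map $\pair{n, w} \mapsto w$ is nowhere dense in $\K$; hence $\cll Y \subseteq \bigcup_n \cll(W_n) \subseteq \cll\Pa{\bigcup_n W_n}$ has empty interior, i.e., $Y$ (and therefore $X$) is nowhere dense. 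Without the \emph{uniform} selection of discrete approximants from Lemma~\ref{lem:nd-countable}, the fiberwise discrete sets might not combine into a single definable discrete object in $\K^2$, so this uniformity is essential.
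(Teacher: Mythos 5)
Your forward direction (\tame $\Rightarrow$ definably meager subsets of $\K$ are nowhere dense) is essentially the paper's own argument: cover the definably meager set by a definable family of \dcompact nowhere dense sets indexed by $N$, replace each by a discrete approximant via the uniform form of Lemma~\ref{lem:nd-countable}, glue these into a single definable discrete subset of $\K^2$ (the paper instead cites \cite{tame:2}*{\S5} for the fact that a definable union over $N$ of discrete sets is at most \psenum), and apply the definition of \tame to the projection.

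The backward direction contains a genuine error. You assert that $D \cap [-t,t]$, being a bounded closed discrete definable subset of $\K$, is finite, and you derive from this that each $B_t$ is nowhere dense. This is false for non-archimedean DC structures, which are exactly the structures the lemma is formulated for: if $\K$ is a proper elementary extension of $\pair{\Rbar, \N}$, $N$ is the corresponding \pN set, and $t \in \K$ is larger than every standard natural number, then $N \cap [0,t]$ is a \dcompact discrete definable set containing all standard naturals and hence infinite. In such a structure $B_t = g(D \cap [-t,t])$ is still \dcompact (continuity of $g$ on a discrete set is automatic), so closed, but the fact that $B_t$ has empty interior cannot be obtained via finiteness; establishing that an at most \psenum set has empty interior, equivalently is definably meager, is precisely the nontrivial content here. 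The paper's ``if'' direction simply invokes the fact that at most \psenum subsets of $\K$ are definably meager (proved in prior work, compare \cite{tame:2}); your argument would need to cite or reprove that fact rather than route it through a finiteness claim that fails away from the archimedean case.
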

\begin{proof}
%
For the ``only if'' direction,
let $X \subseteq \bigcup_{i \in N} Y_i$, 
with $(Y_i: i \in \N)$ definable family of nowhere dense sets.
By Lemma~\ref{lem:nd-countable}, there exists a definable family of discrete
sets $(Z_i: i \in N)$, such that, for every $i \in N$, 
$Y_i \subseteq \cll(Z_i)$.
Let $W \coloneqq \bigcup_{i \in N} Z_i$.
By \cite{tame:2}*{\S 5}, $W$~is at most pseudo-enumerable.
Since $X \subseteq \cl W$, we have that $X$ is nowhere dense.

For the ``if'' direction,
let $X \subset \K$ be at most pseudo-enumerable.
Then, $X$ is definably meager;
thus, by assumption, 
$X$~is nowhere dense, proving that $\K$ is \tame.
%
\end{proof}

\begin{proviso*}
From now on, we will assume that $\K$ is \tame (besides being a definably
complete expansion of an ordered field).
\end{proviso*}

The following lemma includes a higher-dimensional analogue of
Lemma~\ref{lem:equivalence}.
\begin{lemma}\label{res:meager-n}
\begin{enumerate}
\item\label{en:equiv-n-meager} 
Let $X \subseteq \K^n$ be definably meager.  Then, $X$ is nowhere dense.
\item\label{en:equiv-n-nd}
Let $X \subseteq \K^n$ be a $\Ds$ set.
Then, either $X$ has interior or it is nowhere dense.
\item\label{en:equiv-n-bad}
Let $A \subseteq \K^{n+p}$ be a $\Ds$ set.
The set
\[
\Bad_n(A) \coloneqq \set{x \in \K^n: \cll(A)_x \neq \cll(A_x)}
\]
is nowhere dense.
\end{enumerate}
\end{lemma}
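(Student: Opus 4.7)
The plan is to prove the three parts together by induction on $n$, exploiting that (1) and (2) are equivalent via Fact~\ref{fact:meager} (apply it with $Y=X$ itself when $X$ is $\Ds$ with empty interior), and that part~(3) at level $n-1$ is the engine driving the inductive step of~(1) at level~$n$.

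In the base case $n=1$, (1)$_1$ is Lemma~\ref{lem:equivalence} and (2)$_1$ is an immediate consequence. For (3)$_{1,p}$, I introduce $G\coloneqq\set{(x,y)\in\K^{1+p}:y\in\cll(A_x)}$; a short computation gives $A\subseteq G\subseteq\cll(A)$, whence $\cll(G)=\cll(A)$ and $\Bad_1(A)=\Pi^{1+p}_1(\cll(A)\setminus G)$. Using the decomposition $A=\bigcup_{i\in N}C_i$ with $C_i$ \dcompact and increasing, together with the definable Kuratowski--Ulam theorem available in the DC setting (cf.\ the proof of Lemma~\ref{lem:dim-fiber-0}), I would show that the fibers $(\cll(A)\setminus G)_x=\cll(A)_x\setminus\cll(A_x)$ form a strongly uniform family of nowhere dense sets in $\K^p$, so that $\Bad_1(A)$ is covered by a $\Ds$ set of empty interior, hence definably meager, and then~(1)$_1$ upgrades it to nowhere dense.

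For the inductive step at~$n$, assume (1)--(3) hold in all dimensions~$<n$. To establish~(1)$_n$, given $X\subseteq\K^n$ definably meager, Fact~\ref{fact:meager} lets me assume $X$ is $\Ds$ with empty interior. Write $\K^n=\K^{n-1}\times\K$. Kuratowski--Ulam provides that $M\coloneqq\set{x\in\K^{n-1}:X_x\text{ is not definably meager in }\K}$ is definably meager in $\K^{n-1}$, and~(1)$_{n-1}$ then gives $M$ nowhere dense. By the inductive hypothesis~(3)$_{n-1,1}$ applied to~$X$, $\Bad_{n-1}(X)$ is also nowhere dense. Supposing for contradiction that $\cll(X)$ contains an open box $U\times V$, I pick $x\in U\setminus(M\cup\Bad_{n-1}(X))$: then $V\subseteq\cll(X)_x=\cll(X_x)$ forces $X_x$ to be somewhere dense, contradicting the fact that $X_x$, being definably meager in~$\K$, is nowhere dense by~(1)$_1$. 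Part~(2)$_n$ follows from~(1)$_n$ via Fact~\ref{fact:meager}, and (3)$_{n,p}$ is then obtained by repeating the base-case argument and invoking the just-proved~(1)$_n$ to pass from definably meager to nowhere dense.

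The main obstacle is controlling $\Bad_n(A)$: since projections of definably meager sets are not in general definably meager, one cannot simply project $\cll(A)\setminus G$ onto~$\K^n$. The resolution will be to use that each fiber $(\cll(A)\setminus G)_x$ is a relatively open subset of the closed set $\cll(A)_x$ and to combine this fiberwise description with the $\Ds$ decomposition $A=\bigcup_{i\in N}C_i$ and Lemma~\ref{lem:nd-countable} (in strongly uniform form, applied in the $\K^p$ direction) to exhibit $\Bad_n(A)$ as covered by a strongly uniform family of nowhere dense sets indexed by~$N$; this is precisely the point at which the tameness hypothesis on~$\K$ is essential.
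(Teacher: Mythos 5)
Your overall architecture---the equivalence of (1) and (2) via Fact~\ref{fact:meager}, a joint induction on $n$ whose base case for (2) is Lemma~\ref{lem:equivalence}, and Kuratowski--Ulam driving the inductive step for (1)$_n$---matches what the paper does; the paper's own proof is itself only a brief sketch that delegates the case $(3)_1$ and the inductive step to \cite{MS99}*{1.6}. Your step from $(3)_{n-1,1}$ and $(1)_{n-1}$ to $(1)_n$ (choose a point of the base avoiding both the Kuratowski--Ulam exceptional set and $\Bad_{n-1}(X)$, then contradict $(1)_1$ on the fiber) is correct and is essentially the argument used there.

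The problem is your treatment of $(3)$. The assertion that ``the fibers $(\cll(A)\setminus G)_x=\cll(A)_x\setminus\cll(A_x)$ form a strongly uniform family of nowhere dense sets in $\K^p$'' is false: for the $\Ds$ set $A=(0,1)\times[0,1]\subseteq\K^2$ one has $A_0=\emptyset$ and $\cll(A)_0\setminus\cll(A_0)=[0,1]$, which has interior. Over points of $\Bad_n(A)$ the fibers of $\cll(A)\setminus G$ can be arbitrary; only over the complement are they (trivially, being empty) nowhere dense. Independently of that, a ``strongly uniform family'' in the paper's sense is indexed by a definable set $I$ through a family of closed witnesses $(Y_i)_{i\in I}$, not by the base point $x$; and even if each fiber were nowhere dense, that would not yield ``$\Bad_1(A)$ covered by a $\Ds$ set of empty interior''---that direction of Kuratowski--Ulam is not available. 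Your closing paragraph states the correct goal (decompose $A=\bigcup_{i\in N}C_i$ and exhibit $\Bad_n(A)$ as covered by a strongly uniform family indexed by $N$, exploiting that $\cll(A)_x\setminus\cll(A_x)$ is relatively open in $\cll(A)_x$), and that is indeed what the construction in \cite{MS99}*{1.6} delivers, but you have not actually carried it out: as written, this is a statement of intent rather than an argument, and it is precisely the nontrivial content of the lemma.
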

\begin{proof}[Sketch of proof]
\eqref{en:equiv-n-meager} is equivalent to \enumref{en:equiv-n-nd}, 
since, if $X$ is a $\Ds$ set,
then $X \setminus \inter X$ is a definably meager set and a $\Ds$ set.

The proofs of \enumref{en:equiv-n-nd} and \enumref{en:equiv-n-bad} 
proceed together by induction on $n$:
the case $\enumref{en:equiv-n-nd}_1$ is Lemma~\ref{lem:equivalence},
while the proofs of $\enumref{en:equiv-n-bad}_1$ 
and the inductive step are, with minor
modifications, the same as in \cite{MS99}*{1.6}.
\end{proof}

\begin{definition}[\cite{FS:1}*{\S5}]
Let $X \subseteq \K^n$ be a definable.
We say that $X$ is \intro{definably almost open}, or \ao for brevity, if
there exist a definable open set
$U$ and a definably meager set $F$, such that $X = U \Sdiff F$.
\end{definition}

\begin{fact}[\cite{FS:1}*{\S5}]
The family of \ao subset of $\K^n$ is a Boolean algebra containing all
$\Ds$ sets.
\end{fact}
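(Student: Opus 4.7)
The plan is to verify the Boolean algebra axioms for the family of \ao sets and then show that $\Ds$ sets sit inside, using throughout the equivalent reformulation: $X$ is \ao iff there is a definable open $U$ with $X \Sdiff U$ definably meager. Before anything else I would record two ideal properties of the class of definably meager sets: closure under definable subsets is immediate from Fact~\ref{fact:meager}, while closure under finite unions follows from Lemma~\ref{res:meager-n}\enumref{en:equiv-n-meager}, which in the \tame setting identifies meagerness with nowhere density, together with the observation that a finite union of nowhere dense sets is nowhere dense.

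For closure under complement, given $X = U \Sdiff F$ with $U$ open and $F$ meager, I would decompose $U^c$ as the disjoint union of the open set $V \coloneqq \K^n \setminus \cl U$ and the topological frontier $\partial U \coloneqq \cl U \setminus U$; the latter is closed with empty interior, and hence definably meager by Fact~\ref{fact:meager}. Then
\[
X^c = U^c \Sdiff F = V \Sdiff \Pa{\partial U \Sdiff F}
\]
is \ao. For closure under intersection, setting $W \coloneqq U_1 \cap U_2$, an elementary unfolding of symmetric differences yields
\[
(X_1 \cap X_2) \Sdiff W \subseteq (X_1 \Sdiff U_1) \cup (X_2 \Sdiff U_2) \subseteq F_1 \cup F_2,
\]
which is meager by the ideal properties. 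Combined with the complement step, De Morgan gives closure under finite unions, so the \ao sets form a Boolean algebra.

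For the inclusion of $\Ds$ sets, given $D$ a $\Ds$ set I would write $D = \inter D \Sdiff (D \setminus \inter D)$. The residual $D \setminus \inter D = D \cap (\K^n \setminus \inter D)$ is the intersection of a $\Ds$ set with a closed set, hence $\Ds$, and it has empty interior by the very definition of $\inter D$; by Fact~\ref{fact:meager} it is definably meager, and $D$ is \ao. The only mildly nontrivial ingredient is the observation in the complement step that the topological frontier of a definable open set is meager; everything else is formal symmetric-difference bookkeeping.
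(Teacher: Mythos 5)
The paper does not actually prove this fact here; it is imported verbatim from \cite{FS:1}*{\S5}, so there is no in-paper argument to compare against. That said, your argument is correct and is the standard ``open modulo meager'' Boolean-algebra argument. The complement step (decompose $U^c$ into the open set $\K^n \setminus \cl U$ and the frontier $\partial U$, which is a definable closed set with empty interior and hence meager by Fact~\ref{fact:meager}) and the observation that $D \setminus \inter D$ is $\Ds$ with empty interior are exactly right. One small remark: you invoke the \tame hypothesis, via Lemma~\ref{res:meager-n}, to get closure of definably meager sets under finite unions, but this actually holds in any DC structure: given two definable increasing covers by closed nowhere dense sets, their pointwise union is again a definable increasing cover by closed nowhere dense sets. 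Since the cited fact in \cite{FS:1} is stated for general DC structures, it is worth noting that your proof, as written, would establish it only under the \tame proviso in force at that point in the paper.
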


\begin{lemma}\label{lem:ao-meager}
Let $X \subseteq \K^n$ be almost open.
Then, 
$X$ is nowhere dense;
iff $X$ has empty interior.
\end{lemma}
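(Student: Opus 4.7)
The statement to prove is the equivalence, for an almost open set $X \subseteq \K^n$, between being nowhere dense and having empty interior. One direction is immediate: any nowhere dense set has empty interior by definition, regardless of any \ao hypothesis.

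The plan for the non-trivial direction is to push the empty interior hypothesis through the almost-open decomposition, using the tameness of $\K$ to handle the meager piece. Write $X = U \Sdiff F$ with $U$ open definable and $F$ definably meager. By Lemma~\ref{res:meager-n}\eqref{en:equiv-n-meager}, the \tame assumption on $\K$ upgrades $F$ from definably meager to nowhere dense, so $\cl F$ is a closed set with empty interior.

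Now I would note that $U \setminus F \subseteq X$, and in particular the definable open set $V \coloneqq U \setminus \cl F$ is contained in~$X$. Assuming $X$ has empty interior forces $V = \emptyset$, so $U \subseteq \cl F$. But $U$ is open and $\cl F$ has empty interior, so $U = \emptyset$. Hence $X = F$, which we already know to be nowhere dense. This completes the argument.

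The only real content here is Lemma~\ref{res:meager-n}\eqref{en:equiv-n-meager}, so the proof is essentially a two-line reduction; there is no serious obstacle. The main thing to be careful about is to subtract $\cl F$ rather than just $F$ when extracting an open subset of~$X$, since $F$ itself need not be closed.
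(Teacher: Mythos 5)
Your proof is correct and follows exactly the same route as the paper: decompose $X = U \Sdiff F$, upgrade $F$ to nowhere dense via Lemma~\ref{res:meager-n}, and conclude $U=\emptyset$, hence $X=F$. The paper compresses the deduction ``$U$ must be empty'' into one clause; you merely spell it out (correctly) via the open subset $U\setminus\cl F\subseteq X$.
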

\begin{proof}
The ``if'' direction is clear..
Conversely, assume that $X$ has empty interior.
By definition, $X = U \Sdiff F$, for some open set $U$ and some definably
meager set $F$.
By Lemma~\ref{res:meager-n}, $F$ is nowhere dense.
Thus, since $X$ has empty interior, $U$ must be empty, and $X = F$.
\end{proof}

\begin{lemma}[$\Ds$-Uniformization]\label{lem:Fs-uniformization}
Let $A \subseteq \K^n \times \K^p$ be a $\Ds$ set.
Let $B \coloneqq \Pi^{n+p}_n(A)$.
Then, there exists a definable function $f: B \to \K^p$, such that
$\Disc(f)$ is nowhere dense and,
for all $b \in B$, $\pair{b, f(b)} \in A$.
\end{lemma}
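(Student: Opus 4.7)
My plan is to reduce to the \dcompact case handled by Lemma~\ref{lem:compact-uniformization} via an exhaustion-and-patching argument, then control the discontinuity set of the patched function using the definable Baire category theorem together with the tameness upgrade ``definably meager implies nowhere dense'' from Lemma~\ref{res:meager-n}\eqref{en:equiv-n-meager}.

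Since $A$ is $\Ds$, I write $A = \bigcup_{i \in N} A(i)$ as a definable increasing family of \dcompact sets: taking a closed $Y \subseteq \K^{n+p+1}$ with $\Pi^{n+p+1}_{n+p}(Y) = A$, set $A(i) := \Pi^{n+p+1}_{n+p}\Pa{Y \cap [-i,i]^{n+p+1}}$, which is \dcompact because projection preserves \dcompactness. Put $B(i) := \Pi^{n+p}_n(A(i))$; these are \dcompact and increase to $B$. Lemma~\ref{lem:compact-uniformization}, applied uniformly in~$i$, yields a definable family of functions $f_i \colon B(i) \to \K^p$ given by $f_i(x) := \lexmin(A(i)_x)$, with each $\Disc(f_i)$ definably meager, hence nowhere dense in~$\K^n$ by Lemma~\ref{res:meager-n}\eqref{en:equiv-n-meager}. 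I then define $i(x) := \min\set{i \in N : x \in B(i)}$ (well-defined on~$B$ since $N$ is discrete and closed) and set $f(x) := f_{i(x)}(x)$; by construction $\pair{x, f(x)} \in A(i(x)) \subseteq A$, and $f$ is definable.

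To see $\Disc(f)$ is nowhere dense, note that $B \setminus \inter B$ is a $\Ds$ set with empty interior, hence nowhere dense by Lemma~\ref{res:meager-n}\eqref{en:equiv-n-nd}; since $\Disc(f) \subseteq B$, it suffices to rule out a nonempty open $U \subseteq \inter B$ in which $\Disc(f)$ is dense. Assuming such $U$ exists, $U = \bigcup_i (U \cap B(i))$ is a definable increasing cover of $U$ by relatively closed sets, and the definable Baire category theorem (applied to $U$ via closures in $\K^n$, using tameness to forbid a nonempty open set from being definably meager) provides a smallest $i_0 \in N$ for which $U \cap B(i_0)$ has nonempty $U$-interior $W$. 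Writing $B(i_0^-)$ for $B(j)$ with $j$ the predecessor of $i_0$ in $N$ (or $\emptyset$ if $i_0 = \min N$), minimality of $i_0$ forces $V := W \setminus B(i_0^-)$ to be a nonempty open subset of $\K^n$ on which $i(x) \equiv i_0$ and hence $f \equiv f_{i_0}$. Since $V$ is open and contained in $B$, continuity of $f$ at a point of $V$ coincides with continuity of $f_{i_0}$ there, so $\Disc(f) \cap V \subseteq \Disc(f_{i_0})$ is nowhere dense, contradicting the density of $\Disc(f)$ in $V$.

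The main delicacy is the bookkeeping around ``predecessor in $N$'' which needs $B(i_0^-)$ to be a single closed set so that $V$ is open; this is guaranteed by discreteness of $N$ and the monotonicity of the family $(B(i))$. Apart from this, the proof is a clean patching-plus-Baire reduction, and the tameness hypothesis enters only in the single step promoting definably meager sets to nowhere dense ones.
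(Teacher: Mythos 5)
Your construction coincides with the paper's: both decompose $A$ as an increasing definable union of \dcompact sets $A(i)$, take $f_i(x) = \lexmin(A(i)_x)$ via Lemma~\ref{lem:compact-uniformization}, and patch by the first index $i$ with $x \in B(i)$ (the paper phrases this through $C(i) \coloneqq B(i) \setminus \bigcup_{j<i} B(j)$, which gives the same function $f$). Where you diverge is in verifying that $\Disc(f)$ is nowhere dense: the paper does this by a direct covering, showing $\Disc(f) \subseteq (B \setminus U) \cup \cl F$ with $U \coloneqq \bigcup_i \interior(C_i)$ and $F \coloneqq \bigcup_i \Disc(f_i)$ both definably meager (hence nowhere dense by Lemma~\ref{res:meager-n}), whereas you argue by contradiction, invoking the definable Baire category theorem to extract an open $V$ on which $f$ equals a single $f_{i_0}$ and then reach a contradiction with $\Disc(f_{i_0})$ being nowhere dense. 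Your route is correct and buys a slightly more self-contained finish (you localize to a single $f_{i_0}$ rather than assembling a uniform family of meager sets), at the cost of making the argument indirect; the paper's route keeps everything constructive and reuses the strongly-uniform-union machinery already set up in \S2--3. One small misattribution worth correcting: the fact that a nonempty open set cannot be definably meager does not come from tameness — it follows from Fact~\ref{fact:meager} (a definably meager set sits inside a $\Ds$ set with empty interior) and holds in any definably complete structure. Tameness is used only where you later promote the definably meager sets $\Disc(f_i)$ (and $B \setminus \inter B$) to nowhere dense ones via Lemma~\ref{res:meager-n}\eqref{en:equiv-n-meager}.
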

\begin{proof}[Proof of Lemma~\ref{lem:Fs-uniformization}]
Since $A$ is a $\Ds$ set, there exists a definable family 
$\Pa{A(i): i \in N}$ of \dcompact sets, such that $A = \bigcup_{i \in N} A(i)$.
For every $i \in N$, define $B(i) \coloneqq \Pi^{n+p}_n(A(i))$,
and $C(i) \coloneqq B(i) \setminus \bigcup_{j < i} B(j)$.
Notice that $B = \bigcup_{i \in N} C(i)$.
For every $i \in N$, define $f_i: C_i \to \K^p$, $f_i(c) \coloneqq
\lexmin(A(i)_c)$, and define $f: B \to \K^p$ as 
$\Gamma(f) \coloneqq \bigcup_{i\in N} f_i$.
If $B$ has empty interior, then it is meager, and therefore nowhere dense, and
we are done.
Otherwise, let $U \coloneqq \bigcup_{i\in N} \interior(C_i)$.
Notice that $B \setminus U$ is definable and hence nowhere dense;
thus, it suffices to show that
$f \rest U$ is continuous outside a nowhere dense set.
By Lemma~\ref{lem:compact-uniformization}, each set $\Disc(f_i)$ is definably
meager, and thus nowhere dense.
Therefore, 
$F \coloneqq \bigcup_i \Pa{\Disc(f_i)}$ is also a definably meager
set, and thus nowhere dense; let $U' \coloneqq U \setminus \cl F$.

We claim that $f \rest {U'}$ is continuous.
It suffices to show that $f \rest{\interior(C_i)\setminus \cl F}$ is
continuous, for every $i \in N$.
However, $f \rest{\interior(C_i) \setminus \cl F} = 
f_i \rest{\interior(C_i) \setminus \cl F}$, and the latter is continuous by
definition of $F$.
\end{proof}

\begin{corollary}[$\Ds$-Choice]\label{cor:inverse}
Let $C \subseteq \K^n$ be a $\Ds$ set, and $f: C \to \K^p$ be a
definable continuous function.
Then, there exists 
a definable function $g: f(C) \to C$, 
 such that, for every $y \in f(C)$, $f(g(y)) = y$, and $\Disc(g)$ is nowhere dense.
\end{corollary}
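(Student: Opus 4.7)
The plan is to reduce the statement to a direct application of $\Ds$-Uniformization (Lemma~\ref{lem:Fs-uniformization}) applied to the ``flipped graph'' of $f$. Concretely, I would consider the set
\[
A \coloneqq \set{\pair{y, x} \in \K^p \times \K^n : x \in C \et f(x) = y},
\]
so that $\Pi^{p+n}_p(A) = f(C)$ and $A_y = f^{-1}(y)$. If I can show $A$ is a $\Ds$ set, Lemma~\ref{lem:Fs-uniformization} hands me a definable $g : f(C) \to \K^n$ with $\pair{y, g(y)} \in A$ for every $y$ (so $g(y) \in C$ and $f(g(y)) = y$) and $\Disc(g)$ nowhere dense, which is exactly what the corollary asks for.

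The one step that needs argument is that $A$ is $\Ds$, equivalently that the graph $\Gamma(f)$ is $\Ds$. Since $C$ is $\Ds$, write $C = \bigcup_{i \in N} C(i)$ with each $C(i)$ \dcompact (using that a $\Ds$ set is a countable union of \dcompact pieces). For each $i$, the restriction $f \rest C(i)$ is continuous on a \dcompact set, hence its graph is closed in the \dcompact set $C(i) \times \cl{B(0;r_i)}$ (for a suitable radius $r_i$ chosen from the \dcompact image $f(C(i))$), so $\Gamma(f \rest C(i))$ is itself \dcompact. Then $\Gamma(f) = \bigcup_{i \in N} \Gamma(f \rest C(i))$ exhibits $\Gamma(f)$ as a definable union over $N$ of \dcompact sets, hence as a $\Ds$ set; swapping the coordinates of the ambient space shows that $A$ is $\Ds$ as well.

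With $A$ in hand as a $\Ds$ subset of $\K^p \times \K^n$, apply Lemma~\ref{lem:Fs-uniformization} with the roles of $(n,p)$ in that lemma played by $(p,n)$ here. The resulting definable section $g: f(C) \to \K^n$ satisfies $\pair{y, g(y)} \in A$, which by construction of $A$ means $g(y) \in C$ and $f(g(y)) = y$ for every $y \in f(C)$, and the same lemma gives $\Disc(g)$ nowhere dense. I expect no real obstacle: the only place where one has to be careful is the verification that $\Gamma(f)$ is $\Ds$, which uses continuity of $f$ crucially (without continuity, the graph over a dcompact piece need not be closed).
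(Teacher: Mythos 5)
Your proposal follows exactly the same route as the paper: form the flipped graph $A = \{\langle f(c), c\rangle : c \in C\}$, observe that $A$ is $\Ds$, and apply Lemma~\ref{lem:Fs-uniformization}. The paper merely asserts that $A$ is $\Ds$ without argument, while you correctly fill in this step by decomposing $C$ into a definable $N$-indexed family of \dcompact pieces and using continuity of $f$ to see that each restricted graph is \dcompact, so that $\Gamma(f)$ (hence $A$) is a strongly uniform union of \dcompact sets and therefore $\Ds$.
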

\begin{proof}
Let
$
A \coloneqq \set{\pair{f(c), c}: c \in C}
$.
Notice that $A$ is a $\Ds$ set; the conclusion follows
by applying Lemma~\ref{lem:Fs-uniformization} to~$A$.
\end{proof}

In the above corollary, notice that, if $f(C)$ itself is nowhere dense, then it might happen that $\Disc(g) = f(C)$.

\section{Dimension and closure operator}\label{sec:dimension}
We will prove some good properties for the dimension function on $\Ds$
sets; in particular, we will prove Theorem~\ref{mainthm:2}.

\begin{lemma}\label{lem:dim}
Let $A \subseteq \K^n$ be a $\Ds$ set.
Then, $\dim(\cl A) = \dim(A)$.
\end{lemma}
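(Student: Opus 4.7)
My plan is to prove the nontrivial inequality $\dim(\cl A) \leq \dim(A)$ by unpacking the definition of dimension through coordinate projections, and then invoking the two earlier structural facts about $\Ds$ sets: that they project to $\Ds$ sets (Remark~\ref{rem:formula}), and that a $\Ds$ set satisfies the interior/nowhere-dense dichotomy (Lemma~\ref{res:meager-n}\eqref{en:equiv-n-nd}).

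The inequality $\dim(A) \leq \dim(\cl A)$ is immediate from the definition, since $A \subseteq \cl A$ and the orthogonal projection of a larger set is larger. For the reverse, set $d \coloneqq \dim(\cl A)$ and pick a coordinate subspace $L \subseteq \K^n$ of linear dimension $d$ such that $\Pi^n_L(\cl A)$ has nonempty interior in $L$. Since $\Pi^n_L$ is continuous (in fact linear), we have
\[
\Pi^n_L(\cl A) \subseteq \cll\bigl(\Pi^n_L(A)\bigr),
\]
and therefore $\cll(\Pi^n_L(A))$ has nonempty interior in $L$.

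Now $\Pi^n_L(A)$ is definable and is the projection of a $\Ds$ set, hence itself a $\Ds$ subset of $L$ by Remark~\ref{rem:formula}. By Lemma~\ref{res:meager-n}\eqref{en:equiv-n-nd}, applied inside $L \cong \K^d$, the set $\Pi^n_L(A)$ either has nonempty interior or is nowhere dense. The latter is ruled out by the previous paragraph, so $\Pi^n_L(A)$ itself has nonempty interior in $L$. By the definition of dimension this forces $\dim(A) \geq d$, completing the proof.

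Conceptually there is no real obstacle, since the hard work is hidden in Lemma~\ref{res:meager-n}; the only thing to be careful about is to use a \emph{coordinate} projection (so that the $\Ds$-preservation from Remark~\ref{rem:formula} applies) rather than an arbitrary linear projection, and to make sure the interior/nowhere-dense dichotomy is invoked in the correct ambient space $L$, which is legitimate because $L$ is a coordinate subspace and hence canonically identified with $\K^d$.
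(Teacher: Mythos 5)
Your argument is essentially the paper's proof, just phrased directly rather than by contradiction: both set up a coordinate subspace $L$ witnessing the dimension of $\cl A$, use the inclusion $\Pi^n_L(\cl A) \subseteq \cll\bigl(\Pi^n_L(A)\bigr)$ together with the fact that $\Pi^n_L(A)$ is a $\Ds$ set, and then invoke the interior/nowhere-dense dichotomy of Lemma~\ref{res:meager-n} to conclude that $\Pi^n_L(A)$ has interior. The proposal is correct and matches the paper's approach.
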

\begin{proof}
Let $d \coloneqq \dim(A)$.
If $d = n$, the result is clear.
Thus, \wloG $d < n$.
Assume, for a contradiction, that $\dim(\cl A) = e > d$.
Let $L$ be a coordinate space of dimension~$e$,
such that $\Pi^n_L(\cl A)$ has nonempty interior inside~$L$.
Notice that $\Pi^n_L(\cl A) \subseteq \cll(\Pi^n_L(A))$,  that
$\Pi^n_L(A)$ is a $\Ds$ set, and that, by assumption,
$\Pi^n_L(A)$ has empty interior.
Thus, by Lemma~\ref{res:meager-n}, $\Pi^n_L(A)$ is nowhere dense, and therefore
$\Pi^n_L(\cl A)$ has empty interior, absurd.
\end{proof}
Notice that we cannot conclude that, if $A$ is a $\Ds$ set, 
then $\dim(\partial A) < \dim(A)$,
since the latter inequality fails for \dminimal structures.

\begin{lemma}\label{lem:dim-function-0}
Let $A \subset \K^n$ be a $\Ds$ set and $f: A \to \K^m$ be a
definable continuous function.
Assume that $\dim(A) = 0$.
Then, $\dim(f(A)) = 0$. 
\end{lemma}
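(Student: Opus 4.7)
My plan is first to reduce to $m = 1$. By Definition~\ref{def:dim}, $\dim(f(A)) = 0$ holds iff, for every coordinate projection $\pi: \K^m \to \K$ onto a coordinate axis, $\pi(f(A)) = (\pi\circ f)(A)$ has empty interior in $\K$. Since $\pi \circ f: A \to \K$ is again definable and continuous, I can replace $f$ by $\pi \circ f$ and assume $m = 1$; the goal becomes showing that $f(A)$ has empty interior in $\K$.

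I would then argue by contradiction, supposing $f(A)$ contains an open interval $I$. The key input is $\Ds$-Choice (Corollary~\ref{cor:inverse}) applied to the continuous map $f: A \to \K$ on the $\Ds$ set $A$: it yields a definable section $g: f(A) \to A$ with $f\circ g = \id$ (so $g$ is injective) and with $\Disc(g)$ nowhere dense in $\K$. Since $\cl{\Disc(g)}$ is closed with empty interior, $I \setminus \cl{\Disc(g)}$ is a nonempty open set and contains an open subinterval $J \subseteq I$ on all of which $g$ is continuous.

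To close the argument, for each $j = 1,\dots,n$ I would consider $h_j \coloneqq \pi_j\circ g \rest J: J \to \K$, where $\pi_j: \K^n \to \K$ is the $j$-th coordinate projection. Each $h_j$ is a continuous definable map from an interval into $\K$, so by the intermediate value property available in any definably complete structure $h_j(J)$ is an interval of $\K$. But $h_j(J) \subseteq \pi_j(A)$, and $\pi_j(A)$ is a $\Ds$ set (projection of a $\Ds$ set, by Remark~\ref{rem:formula}) with empty interior (because $\dim A = 0$), hence nowhere dense by Lemma~\ref{res:meager-n}. So $h_j(J)$ cannot be a nondegenerate interval, \ie each $h_j$ is constant on $J$, forcing $g$ itself to be constant on $J$. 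Since $\card{J} > 1$, this contradicts the injectivity of $g$.

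The only place where I rely on \tame-ness is via Lemma~\ref{res:meager-n}, which upgrades ``empty interior'' to ``nowhere dense'' for the $\Ds$ sets $\pi_j(A)$; without it, a nondegenerate interval could in principle sit inside $\pi_j(A)$. The main step where I expect to need a little care is verifying that Corollary~\ref{cor:inverse} really produces a section whose discontinuity set is nowhere dense in the ambient $\K$, so that $J$ can be chosen as a genuine open interval; with that in hand, the rest is a direct combination of $\Ds$-Choice and the intermediate value theorem.
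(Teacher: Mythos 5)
Your proof is correct and follows essentially the same route as the paper: reduce to $m=1$, assume $f(A)$ contains an interval, apply $\Ds$-Choice (Corollary~\ref{cor:inverse}) to get a continuous section on a subinterval, and derive a contradiction from the fact that a definably connected subset of a $0$\hyph dimensional set must be a point. The paper simply says ``$g(I')$ is definably connected, so $g$ is constant''; your coordinate-wise unpacking via $h_j = \pi_j\circ g$ and the intermediate value property is the expansion of that same step. Two small quibbles. First, the appeal to Lemma~\ref{res:meager-n} at the end is unneeded: since $h_j(J)$ is a definably connected subset of $\K$, it is an interval, and a nondegenerate interval already has nonempty interior; so $h_j(J) \subseteq \pi_j(A)$ together with $\pi_j(A)$ having empty interior (which is exactly $\dim(A)=0$) forces $h_j(J)$ to be a singleton, no nowhere-dense upgrade required. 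Second, your closing remark that \tame{}ness enters only through Lemma~\ref{res:meager-n} is therefore not quite right: the substantive use of \tame{}ness in this argument is inside Corollary~\ref{cor:inverse} itself, whose proof (via Lemma~\ref{lem:Fs-uniformization}) needs \tame{}ness to promote $\Disc(g)$ from definably meager to nowhere dense, which is precisely what lets you extract a genuine open subinterval $J$.
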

\begin{proof}
Assume, for a contradiction, that $\dim(f(A)) > 0$.
\Wlog, $m = 1$.
Since $f(A)$ is a $\Ds$ set, this means that $f(A)$ is
nonmeager, and thus it contains an open interval~$I$.
By applying Corollary~\ref{cor:inverse},
we conclude that there exists an open interval $I' \subseteq I$ 
and a continuous definable function $g: I' \to \K^n$, such that,
for every $y \in I'$, $g(y)\in A$ and $f(g(y)) = y$.
Since $I'$ is definably connected and $g$ is continuous, $g(I')$ is also
definably connected.
Since $\dim(A) = 0$, the function $g$ must be constant, contradicting 
$f(g(y)) = y$.
\end{proof}

We write that a set is $\emptyset$-definable if it is definable without
parameters. 
We introduce a matroid, which is useful in treating the dimension for $\Ds$
sets.

\begin{definition}\label{def:Fcl}
Let $B \subset \K^n$ be any set (not definable, in general)
and $a \in \K$.
We say that $a \in \Fcl(B)$ (the ``F'' stands for ``$\mathcal F_\sigma$'') if
there exists $\bv \in B^n$ and $X \subset \K^{n+1}$, such that:
\begin{enumerate}[(a)]
\item\label{en:def-Fcl-Ds} $X$ is a $\Ds$ set and $X$ is \Zdefinable;
\item\label{en:def-Fcl-nd} For every $\y \in \K^n$, $X_y$ is nowhere dense;
\item $a \in X_\bv$.
\end{enumerate}
\end{definition}
Notice that, in the above definition, under Assumption~\enumref{en:def-Fcl-Ds}, 
Assumption~\enumref{en:def-Fcl-nd} is equivalent to: 
\begin{enumerate}
\item[(\ref{en:def-Fcl-nd}')]  For every $\y \in \K^n$, $X_y$ is definably meager; 
\end{enumerate}
and to:
\begin{enumerate}
\item[(\ref{en:def-Fcl-nd}'')] For every $\y \in \K^n$, $X_y$ has empty interior. 
\end{enumerate}

\begin{lemma}\label{lem:matroid}
$\Fcl$ is a finitary matroid: that is (for every $B$ and $C$ subsets of $\K$
and every $a,c \in \K$)
\begin{enumerate}
\item\label{en:mat-ext} $\Fcl$ is extensive: 
  $B \subseteq \Fcl(B)$;
\item\label{en:mat-incr} $\Fcl$ is increasing: 
  $B \subseteq C$ implies $\Fcl(B) \subseteq \Fcl(C)$;
\item\label{en:mat-idemp} $\Fcl$ is idempotent: 
  $\Fcl(\Fcl(B)) = \Fcl(B)$;
\item\label{en:mat-EP} $\Fcl$ satisfies the Exchange Property:
  $a \in \Fcl(B c) \setminus \Fcl(B) \Rightarrow c \in \Fcl(B a)$;
\item\label{en:mat-fin} $\Fcl$ is finitary:
  if $a \in \Fcl(B)$, then there exists $B' \subseteq B$ finite, such that $a
  \in \Fcl(B')$.
\setcounter{saveenum}{\value{enumi}}
\end{enumerate}
Moreover, we have:
\begin{enumerate}
\setcounter{enumi}{\value{saveenum}}
\item\label{en:mat-dim}
let $X \subset \K^n$ be a $\Ds$ set of dimension $d \leq n$,
which is \Zdefinable.
Let $\bv \in X$.
Then, there exists $\bv'$ a subtuple of $\bv$ of length $d$,
such that $\Fcl(\bv') = \Fcl(\bv)$.
\end{enumerate}
\end{lemma}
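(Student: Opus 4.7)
The plan is to verify extensivity, monotonicity, and finiteness directly; to obtain idempotence by concatenating witnesses; to prove exchange by a double Kuratowski--Ulam argument together with Lemma~\ref{lem:dim-fiber-p}; and to derive the subtuple statement from the first five axioms combined with the dimension theory of $\Ds$ sets.

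For extensivity, given $a \in B$ I take $n=1$, $\bv = (a)$, and $X = \set{(y,x) \in \K^2 : x = y}$, a closed $\emptyset$-definable set with singleton fibers. Monotonicity is built in, since any witness for $\Fcl(B)$ already uses only a tuple in $B \subseteq C$. Likewise the finitary property is automatic, as a witness names only the finite entries of $\bv$. For idempotence, if $a \in \Fcl(\iset{c_1, \dots, c_m})$ via $X$ and each $c_i \in \Fcl(B)$ via $(\bv_i, Y_i)$, I would concatenate the tuples into $\bv \coloneqq (\bv_1, \dots, \bv_m) \in B^{\sum k_i}$ and substitute the $Y_i$-witnesses into the argument slots of $X$; the resulting $\emptyset$-definable $\Ds$ set has fiber over $\bv$ containing $a$, and uniform nowhere-denseness of its fibers follows from a Kuratowski--Ulam argument on the product structure, in the spirit of the proof of Lemma~\ref{lem:dim-fiber-0}.

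The exchange property is the main point. Assume $a \in \Fcl(Bc) \setminus \Fcl(B)$ with witness $\bv \in B^k$ and $X \subseteq \K^{k+2}$ that is $\emptyset$-definable and $\Ds$, with $X_{\y, z}$ nowhere dense for every $(\y, z) \in \K^{k+1}$, and $a \in X_{\bv, c}$. Define the exceptional set
\[
S \coloneqq \set{(\y, \alpha) \in \K^{k+1} : \set{z : (\y, z, \alpha) \in X} \text{ has nonempty interior}}.
\]
Applying Lemma~\ref{lem:dim-fiber-p} to the coordinate swap of $X$, $S$ is $\emptyset$-definable and $\Ds$. A double application of the definable Kuratowski--Ulam theorem (as in the proof of Lemma~\ref{lem:dim-fiber-0}) shows each fiber $S_\y$ is nowhere dense: the section $X_\y \subseteq \K^2$ is $\Ds$ with all horizontal sections nowhere dense, hence definably meager, hence nowhere dense by Lemma~\ref{res:meager-n}; then the set $S_\y$ of $\alpha$ whose vertical section in $X_\y$ is non-meager is again meager. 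Thus $S$ itself is a valid $\Fcl$-witness showing $\alpha \in S_\y \Rightarrow \alpha \in \Fcl(\y)$. Since $a \notin \Fcl(B)$ I get $a \notin S_\bv$, so $\set{z : (\bv, z, a) \in X}$ has empty interior, is therefore nowhere dense (Lemma~\ref{res:meager-n}), and contains $c$. The main obstacle is upgrading this to a $\emptyset$-definable $\Ds$ set with \emph{all} fibers nowhere dense: the coordinate swap $X^\sigma \coloneqq \set{(\y, \alpha, z) : (\y, z, \alpha) \in X}$ is $\Ds$ and has $c \in X^\sigma_{\bv, a}$, but its other fibers may have interior, and naively removing them via $X^\sigma \setminus (S \times \K)$ leaves a difference of $\Ds$ sets. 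I would handle this using the \dcompact decomposition $X = \bigcup_{i \in N} X(i)$ and Lemma~\ref{lem:dim-fiber-p} fiber-by-fiber: for each $i$ the exceptional set $S(i)$ where $X(i)^\sigma$ fails to have nowhere dense fibers is $\Ds$, and each offending fiber can be replaced by a single placeholder point, reassembling the pieces into a $\emptyset$-definable $\Ds$ set $Z$ with every fiber nowhere dense and $c \in Z_{\bv, a}$. This proves $c \in \Fcl(Ba)$.

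For the subtuple statement, choose by Definition~\ref{def:dim} a coordinate subspace $L$ of dimension $d$ with $\Pi^n_L(X)$ of nonempty interior, and let $\bv'$ be the subtuple of $\bv$ lying in $L$; monotonicity gives $\Fcl(\bv') \subseteq \Fcl(\bv)$. For the converse it suffices to verify $b_j \in \Fcl(\bv')$ for each coordinate $b_j$ of $\bv$ outside $L$. The projection $Y \coloneqq \Pi^n_{L \oplus \K e_j}(X) \subseteq \K^{d+1}$ is $\Ds$ by Remark~\ref{rem:formula}, has dimension at most $d$ by Theorem~\ref{mainthm:2}, hence has empty interior in $\K^{d+1}$ and is nowhere dense by Lemma~\ref{res:meager-n}; by Lemma~\ref{lem:dim-fiber-0} its exceptional fiber set is $\Ds$ and nowhere dense, and the same cleanup construction used for exchange produces a $\emptyset$-definable $\Ds$ witness with uniformly nowhere dense fibers whose fiber over $\bv'$ contains $b_j$.
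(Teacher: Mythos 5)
Your handling of extensivity, monotonicity, and finitariness matches the paper, and the opening move in your exchange argument (observing that $S$ is itself a valid $\Fcl$-witness, so $a\notin\Fcl(B)$ directly yields $a\notin S_{\bv}$) is actually slicker than the paper's Case~1. But each of the three substantive verifications has a gap, and in each case the missing idea is specific.

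For idempotence, after forming $Z := X \cap \set{(\y,\z,x) : \bigwedge_i (\y, z_i) \in Y(i)}$ and projecting out the middle block to get $W$, you must show every fiber $W_{\y}$ has empty interior. This is not a Kuratowski--Ulam statement: $W_{\y}$ is the image of the nowhere dense set $Z_{\y}\subseteq\K^{n+1}$ under a coordinate projection, and such images can fill an interval. The paper instead extracts a continuous section $g: I \to \K^n$ over a putative interval $I\subseteq W_{\dv}$ via $\Ds$-Uniformization (Lemma~\ref{lem:Fs-uniformization}) and uses \emph{connectedness} of $I$ together with $g(I)\subseteq Y(1)_{\dv}\times\dotsm\times Y(n)_{\dv}$ (a product of nowhere dense sets) to force $g$ to be constant, so $I\subseteq X_{\dv, g(I)}$, contradicting the nowhere-denseness of the fibers of $X$. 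That connectedness step cannot be replaced by a K--U argument.

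For exchange, the cleanup fails as stated: $X(i)^\sigma \setminus (S(i)\times\K)$ is a closed set minus a $\Ds$ set, hence definably $\Gdelta$ rather than $\Ds$, and patching in a placeholder does not fix this. Passing to $\cl{S(i)}$ would restore $\Ds$-ness, but then you must show $(\bv, a)\notin\cl{S(i)}$, which does not follow from $(\bv, a)\notin S(i)$. This is precisely the obstacle the paper's proof is built around: it chooses $\bv\subseteq B$ of \emph{minimal length} with $a\in\Fcl(\bv)$, proves the subtuple statement~\enumref{en:mat-dim} \emph{before} exchange, and combines it with Lemma~\ref{res:meager-n}\enumref{en:equiv-n-bad} to conclude that $\bv$ avoids the closure of the bad set in $\K^n$, so the witness can be carved out by intersecting $X$ with an \emph{open} complement. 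By scheduling \enumref{en:mat-dim} last and not picking $\bv$ minimal, you have discarded exactly the tool needed here. Your proof of \enumref{en:mat-dim} has the same defect: taking $\bv'$ to be the coordinates of $\bv$ in a subspace $L$ with $\Pi_L(X)$ of nonempty interior does not work, since $\bv'$ may itself lie in the (nowhere dense but nonempty) exceptional set of Lemma~\ref{lem:dim-fiber-0}. A concrete obstruction: take $X=\set{(t,t): t\in\K}\cup(\iset{0}\times\K)$ and $\bv=(0,5)$ with $5$ generic; then $L=\K e_1$ gives $\bv'=(0)$, but $5\notin\Fcl((0))$. The paper avoids this by inducting on the refined ``full dimension'' $(d,k)$ and splitting off the piece $X^{>0}$ lying over the exceptional set, which has strictly smaller full dimension.
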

\begin{proof}
\enumref{en:mat-incr} and \enumref{en:mat-fin}  are clear.
\enumref{en:mat-ext} is also clear: 
take $X \coloneqq \set{\pair{x, y} \in \K^2: x = y}$ in the definition of $\Fcl$.

\enumref{en:mat-idemp} Let $a \in \Fcl(\Fcl(B))$.
Thus, there exist $\cv\in \K^n$ and $\bv \in B^m$,
such that $a \in \Fcl(\bv \cv)$ and, 
for every $i= 1, \dotsc, n$, $c_i \in \Fcl(\bv)$.
Therefore, by definition, there exist $X \subset \K^{m + n + 1}$ and $Y(i)
\subset \K^{m+1}$, $i = 1, \dotsc, n$, such that
\begin{enumerate}[(a)]
\item $X$ and $Y(i)$ are $\Ds$ and \Zdefinable set;
\item For every $\y \in \K^n$ and $\z \in \K^m$,
$X_{\y,\z}$ and $Y(i)_{\z}$ are nowhere dense;
\item $\pair{\bv, \cv, a} \in X$ and $\pair{\bv, c_i} \in Y(i)$.
\end{enumerate}
Define
\[
Z  \coloneqq X \cap \set{\pair{\y, \z, x} \in \K^m \times \K^n \times \K:
\smash{\bigwedge_{i = 1}^n} \pair{\y, z_i}\in Y(i)}, \qquad
W \coloneqq \Pi(Z),
\]
where 
$\Pi: \K^m \times \K^n \times \K \to \K^m \times \K$ is
the projection omitting the ``middle'' $n$ coordinates.

Notice that $\pair{\bv,\cv,a} \in Z$, and therefore $\pair{\bv, a} \in W$.
Moreover, it is clear that $W$ is a $\Ds$ and \Zdefinable set.
Thus, it suffices to show that, for every $\y \in \K^m$, $W_{\y}$ has empty interior.
Assume, for a contradiction, that $W_{\dv}$ has nonempty interior, 
for some $\dv \in \K^m$.
By $\Ds$-Uniformization, there exists an open interval 
$I \subseteq W_{\dv}$ and a continuous function $g: I \to \K^n$, such that,
for every $t \in I$, $\pair{g(t), t} \in Z_{\dv}$.
Since
\[
\Z_{\dv} \subseteq Y(1)_\dv \times \dots \times Y(n)_\dv 
\times \K,
\]
and each $Y(i)_\dv$ is nowhere dense, the function $g$ must be constant, 
say $g(t) = \bv'$
Thus, $\set{\bv'} \times \K \subseteq Z_{\dv}$, contradicting the assumption
that $Z \subseteq X$ and (b).

\enumref{en:mat-dim}
We define the full dimension of $X$ as the lexicographically ordered pair 
$\pair{d,k}$, where $d \coloneqq \dim(X)$ and $k$ is the number of $d$-dimensional 
coordinate spaces $L$, such that $\Pi^n_L(X)$ has nonempty interior.
The proof is by induction on the full dimension of~$X$.
If $d = n$, we take $\bv' = \bv$.
Otherwise, let $Y^{>0} \coloneqq \set{\y \in \K^d: \dim(X_{\y}) > 0}$, and
$X^{>0} \coloneqq X \cap \cll\Pa{\Pi^{-1}(Y^{>0})}$, where $\Pi \coloneqq \Pi^{n}_d$.
By Lemma~\ref{lem:dim-fiber-0}, $X^{>0}$ is a $\Ds$ set of full
dimension less than the full dimension of~$X$.
Thus, if $\bv \in X^{>0}$, then the conclusion follows by inductive
hypothesis.
Assume instead that $\bv \in X^{0} \coloneqq X \setminus X^{>0}$; 
define $\bv' \coloneqq \pair{b_1, \dotsc, b_d}$.
Consider for simplicity the case when $n = d+1$; notice that $X^{0}$ is a
$\Ds$ set with fibers of dimension $0$:
by definition, $b_n \in \Fcl(\bv')$.
In the general case we proceed similarly, by using
Lemma~\ref{lem:dim-function-0}, and conclude that $\bv \in \Fcl(\bv')$.
 The conclusion then follows from \enumref{en:mat-ext}, 
\enumref{en:mat-incr} 
and \enumref{en:mat-idemp}.

\enumref{en:mat-EP}
Let $\bv \subset B$ be of minimal length, such that $a \in \Fcl(\bv)$; say,
$\bv \in B^n$.
There exists 
$X \subseteq \K^{n+2}$, such that
$X$ is a \Zdefinable $\Ds$ set, 
for every $\pair{\y, z} \in \K^{n+1}$, $X_{\y,z}$ is nowhere dense, 
and $\pair{\bv, c, a} \in X$.

For every $\dv \in \K^n$,  define 
\[
Y_{\dv}^{>0} \coloneqq \set{x \in \K: \dim\Pa{\iset{z \in \K: \pair{\dv, z, x}
      \in X}}>0},
\]
and $Y^{>0} \subseteq \K^{n+1}$ be the set whose fibers are given by the 
$Y_{\dv}^{>0}$.
By Lemma~\ref{lem:dim-fiber-0}, each $Y^{>0}_{\dv}$ is nowhere dense; moreover,
the whole $Y^{>0}$ is a $\Ds$ set.

Assume that $a \in \cll(Y^{>0}_{\bv})$.
Notice that $Y^{>0}$ is a $\Ds$ set with empty interior, and
therefore it is nowhere dense.
Let $W \coloneqq \cll(Y^{>0})$; notice that $\pair{\bv, a} \in W$.
Since $W$ is a nowhere dense subset of $\K^{n+1}$, $\dim(W) \leq n$;
thus, by Lemma~\ref{lem:dim-fiber-0}, the set
$C \coloneqq \set{\dv \in \K^n: \dim(W_{\dv}) > 0}$ is nowhere dense; by
  minimality of $\bv$  and \enumref{en:mat-dim}, $\bv \notin \cl C$.
Let $W' \coloneqq W \setminus (\cl C \times \K)$.
Then, $\pair{\bv,a} \in W'$, and $W'_{\dv}$ is nowhere dense for every $\dv
\in \K^n$; thus, $a \in \Fcl(\bv)$, a contradiction.

Assume now that $a \notin \cll\Pa{Y^{>0}_{\bv}}$.
By Lemma~\ref{res:meager-n}, the set
$B \coloneqq\set{\dv  \in \K^n: \cll(Y_{\dv}^{>0}) \neq \cll(Y^{>0})_{\dv}}$ is
nowhere dense; 
thus, by \enumref{en:mat-dim} and minimality of $\bv$, 
we have $\bv \notin \cl B$,
and therefore $a \notin \cll(Y^{>0})_{\bv}$.
Let
\[
Z \coloneqq \set{\pair{\y, z, x} \in \K^n \times \K \times \K: \pair{\y, z, x}
\in X \et \pair{\y, x} \notin \cll(Y^{>0})}.
\]
Notice that $Z$ is a \Zdefinable $\Ds$ set, that 
$\pair{\bv, c, a} \in Z$, and that, for every $\dv \in \K^n$ and $a' \in \K$,
$\dim(\set{z\in \K: \pair{\dv, z,a'} \in Z}) \leq 0$.
Thus, $c \in \Fcl(\bv a)$.
\end{proof}

Since, by Lemma~\ref{lem:matroid}, $\Fcl$ is a matroid, it induces a 
\intro{rank} function, which we denote by $\rk$.

\begin{definition}
Let $\bv \in \K^n$.
Let $\K(\bv)$ be the expansion of $\K$ by constants denoting~$\bv$.
Let $\Fcl_{\bv}$ be the matroid $\Fcl$ defined in $\K(\bv)$
and $\rk_{\bv}$ be the rank associated to~$\Fcl_{\bv}$.
\end{definition}

\begin{remark}
We have $\Fcl(\bv) \subseteq \Fcl_{\bv}(\emptyset)$.
It is not true in general that $\Fcl(\bv) = \Fcl_{\bv}(\emptyset)$.
Therefore, $\rk_{\bv}(\av) \leq \rk(\av/\bv)$.
The relative (in $\K$) field algebraic closure of $\bv$ is contained in
$\Fcl(\bv)$, but the model-theoretic algebraic closure of $\bv$ might not be
contained in $\Fcl(\bv)$.
On the other hand, $\Fcl_{\bv}(\emptyset)$ contains the model-theoretic
algebraic closure of~$\bv$.
\end{remark}

\begin{lemma}\label{lem:rk-dim}
Assume that $\K$ is $\omega$-saturated.
\begin{enumerate}\item\label{en:rk-open} 
Let $U \subseteq \K^n$ be open, nonempty, and definable.
Then, there exists $\bv \in U$ such that $\rk(\bv) = n$.
\item\label{en:rk-dim-Ds}
Let $X \subseteq \K^n$ be a \Zdefinable $\Ds$ set.  Then,
$
\dim(X) = \max\Pa{\rk(\bv): \bv \in X}
$.
\end{enumerate}
\end{lemma}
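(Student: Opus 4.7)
For part \enumref{en:rk-open}, my plan is to use $\omega$-saturation to realize a partial type that forces $\bv$ to be matroid-generic inside~$U$. Consider the partial type $\Sigma(\x)$ over $\emptyset$ consisting of the formula $\x \in U$ together with, for each $i = 1, \ldots, n$ and each \Zdefinable $\Ds$ set $X \subseteq \K^i$ all of whose fibers $X_\y$ over $\K^{i-1}$ are nowhere dense, the formula $\pair{x_1, \ldots, x_i} \notin X$. By the definition of $\Fcl$, any realization $\bv$ automatically satisfies $b_i \notin \Fcl(b_1, \ldots, b_{i-1})$ for every~$i$, and hence $\rk(\bv) = n$.

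The main technical point is finite consistency of $\Sigma$. Given finitely many such sets $X_1, \ldots, X_m$ with $X_j \subseteq \K^{i_j}$, I first claim each $X_j$ is itself nowhere dense in $\K^{i_j}$: if $X_j$ had nonempty interior, then some fiber $(X_j)_\y$ over an open box in $\K^{i_j - 1}$ would contain an open interval, contradicting the fiber hypothesis; since $X_j$ is $\Ds$, Lemma~\ref{res:meager-n}\enumref{en:equiv-n-nd} then upgrades ``empty interior'' to ``nowhere dense''. Consequently each cylinder $X_j \times \K^{n - i_j}$ is nowhere dense in $\K^n$, their finite union cannot cover the open nonempty set~$U$, and any point of $U$ outside this union witnesses finite consistency. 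Since the language is countable, $\Sigma$ involves only countably many formulas, so $\omega$-saturation produces the desired realization.

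For part \enumref{en:rk-dim-Ds}, the upper bound $\rk(\bv) \leq \dim(X)$ for every $\bv \in X$ is immediate from Lemma~\ref{lem:matroid}\enumref{en:mat-dim}: setting $d \coloneqq \dim(X)$, that lemma yields a subtuple $\bv'$ of $\bv$ of length $d$ with $\Fcl(\bv') = \Fcl(\bv)$, so $\rk(\bv) = \rk(\bv') \leq d$. For the lower bound, choose a coordinate space $L$ of linear dimension $d$ such that $\Pi^n_L(X)$ has nonempty interior in $L \cong \K^d$, apply part~\enumref{en:rk-open} inside a nonempty open subset of that interior to obtain $\av \in L$ with $\rk(\av) = d$, and then pick any $\bv \in X$ with $\Pi^n_L(\bv) = \av$ (possible since $\av \in \Pi^n_L(X)$). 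Because the entries of $\av$ appear among the entries of $\bv$ (up to a permutation of coordinates), $\rk(\bv) \geq \rk(\av) = d$, and together with the upper bound this gives $\rk(\bv) = d = \dim(X)$, completing the proof.
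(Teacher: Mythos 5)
Your proof is correct. For part~\enumref{en:rk-open}, you build a single partial type $\Sigma(\x)$ that simultaneously encodes $b_i\notin\Fcl(b_1,\dotsc,b_{i-1})$ for every~$i$ and verify finite consistency directly via the nowhere-dense-cylinder argument. The paper instead proceeds by induction on~$n$: the base case $n=1$ realizes a one-variable type $\Lambda(x)$, and the inductive step adjoins $\cv$ to the language and works in the relativized matroid $\Fcl_{\cv}$ over the open fiber $U_{\cv}$. The two routes are equivalent; yours avoids iterating the $\Fcl_{\cv}$ construction at the price of a more careful finite-consistency check, which you carry out correctly, including the key use of Lemma~\ref{res:meager-n}\enumref{en:equiv-n-nd} to pass from ``no interior'' to ``nowhere dense'' for $\Ds$ sets so that finitely many cylinders cannot cover~$U$. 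One small point you gloss over: the definition of $\Fcl$ quantifies over arbitrary tuples $\bv\in B^m$ (any length, with repetitions), not only over the ordered initial segment $(b_1,\dotsc,b_{i-1})$; you should note that any such witness $Y\subseteq\K^{m+1}$ converts, via a \Zdefinable{} coordinate-selection map, into a witness $X'\subseteq\K^i$ of the form appearing in $\Sigma$, and that this substitution preserves being $\Ds$ and preserves nowhere-density of fibers. Also, the ``countable language'' qualification is superfluous: $\omega$-saturation realizes \emph{all} types over finite parameter sets irrespective of how many formulas they contain, which is precisely what the paper relies on implicitly as well. Part~\enumref{en:rk-dim-Ds} is essentially the paper's own proof: the upper bound via Lemma~\ref{lem:matroid}\enumref{en:mat-dim}, and the lower bound by choosing a generic $\av$ in the interior of a $d$-dimensional coordinate projection of~$X$ using part~\enumref{en:rk-open} and lifting it to a point of~$X$.
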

\begin{proof}
\enumref{en:rk-open}
We proceed by induction on~$n$.
First, we consider the case $n = 1$.
Consider the following partial type (over the parameters of definition of~$U$):
\[
\Lambda(x) \coloneqq \set{x \in U \et x \notin Y: Y \subset \K \text{ nowhere
    dense and \Zdefinable}}.
\]
If $\Lambda(x)$ is consistent, then any realization $b$ of $\Lambda(x)$ will
satisfy $b\in U$ and $\rk(b) = 1$.
If, for a contradiction, $\Lambda(x)$ were inconsistent, there would exists
finitely many nowhere dense sets $X_1, \dotsc, X_k \subset \K$,
such that $U \subseteq X_1 \cup \dots \cup X_k$, which is absurd.

Assume now that we have proved \enumref{en:rk-open} for $n-1$; we want to prove
it for~$n$. Let $V \coloneqq \Pi^n_{n-1}(U)$. 
By inductive hypothesis, there exists $\cv \in V$ such that $\rk(\cv) = n-1$.
Add $\cv$ to the language, and consider the matroid $\Fcl_{\cv}$.
By applying the case $n=1$ to the open set $U_{\cv}$ and the matroid
$\Fcl_{\cv}$, we find $b_n \in U_{\cv}$, 
such that $b_n \notin \Fcl_{\cv}(\emptyset)$.
Therefore, $b_n \notin \Fcl(\cv)$.
Let $\bv \coloneqq \pair{\cv, b_n} \in \K^n$.
We have that $\bv \in U$ and $\rk(\bv) = n$.

\enumref{en:rk-dim-Ds}
Let $d \coloneqq \dim(X)$ and $e \coloneqq  \max\Pa{\rk(\bv): \bv \in X}$.
We prove that $d \geq e$ and $e \geq d$.

($d \geq e$).
Let $\bv \in X$ such that $\rk(\bv) = e$.
By Lemma~\ref{lem:matroid}(6), there exists $\bv'$ a subtuple of $\bv$ of
length~$d$, such that $\Fcl(\bv') = \Fcl(\bv)$.
Thus, $e = \rk(\bv) = \rk(\bv') \leq d$.

($e \geq d$).
Since $\dim(X) = d$, \wloG $\Pi^n_d(X)$ contains 
a nonempty definable open set~$U$.
By \enumref{en:rk-open}, there exists $\cv \in U$ such that $\rk(\cv) = d$.
Any $\bv \in X \cap (\set{\cv} \times \K^{n-d})$ will satisfy 
$\rk(\bv) \geq d$.
\end{proof}

\begin{lemma}\label{lem:dim-additive}
Let $A \subseteq \K^{m+n}$ be $\Ds$ and $B \coloneqq \Pi^{n+m}_n(A)$.
\begin{enumerate}
\item\label{en:add-geq} 
Assume that $\dim(B) \geq q$ and, for every $\bv \in B$, $\dim(A_{\bv})
\geq p$.
Then, $\dim(A) \geq p + q$.
\item\label{en:add-leq}
Assume that $\dim(B) \leq q$ and, for every $\bv \in B$, 
$\dim(A_{\bv}) \leq p$.
Then, $\dim(A) \leq p + q$.
\item\label{en:add-eq}
In particular, if $\dim(A_\bv) = p$ for every $\bv \in B$,
then $\dim(A) = \dim(B) + p$.
\end{enumerate}
\end{lemma}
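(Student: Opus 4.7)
The plan is to establish the lower bound \enumref{en:add-geq} and the upper bound \enumref{en:add-leq} separately; part \enumref{en:add-eq} then follows immediately. Throughout we may freely assume $\K$ is $\omega$-saturated, since the hypotheses and both conclusions are first-order statements about $A$ and $B$ (namely, the existence or non-existence of open boxes inside specified coordinate projections), so they transfer between elementarily equivalent structures.

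For \enumref{en:add-geq}, the strategy is to exploit the matroid $\Fcl$ of Definition~\ref{def:Fcl} together with Lemma~\ref{lem:rk-dim}\enumref{en:rk-dim-Ds}, which identifies $\dim(X)$ with $\max\set{\rk(\bv) : \bv \in X}$ for any \Zdefinable $\Ds$ set~$X$. Applied to~$B$ it yields $\bv \in B$ with $\rk(\bv) \geq q$. Expanding the language by constants for~$\bv$, the fiber $A_\bv$ is a \Zdefinable $\Ds$ set in $\K(\bv)$, so Lemma~\ref{lem:rk-dim}\enumref{en:rk-dim-Ds} applied in $\K(\bv)$ produces $\cv \in A_\bv$ with $\rk_\bv(\cv) \geq p$. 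Combining the standard matroid identity $\rk(\bv\cv) = \rk(\bv) + \rk(\cv/\bv)$ with the inequality $\rk_\bv(\cv) \leq \rk(\cv/\bv)$ noted in the Remark after Lemma~\ref{lem:rk-dim}, we obtain $\rk(\bv\cv) \geq p + q$; one more application of Lemma~\ref{lem:rk-dim}\enumref{en:rk-dim-Ds} to~$A$ gives $\dim(A) \geq p + q$.

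For \enumref{en:add-leq}, I would argue by contradiction. Suppose $\dim(A) \geq p + q + 1$, and let $L = L_1 \times L_2$ with $L_1 \subseteq \K^n$ and $L_2 \subseteq \K^m$ be a coordinate subspace such that $\Pi^{n+m}_L(A) \supseteq U_1 \times U_2$ for nonempty open boxes $U_i \subseteq L_i$. Since $\Pi^n_{L_1}(A) = \Pi^n_{L_1}(B)$ contains~$U_1$, we have $\dim L_1 \leq \dim B \leq q$, so $\dim L_2 \geq p + 1$. For each $\xi_1 \in U_1$, the section $\set{\xi_2 \in L_2 : \pair{\xi_1, \xi_2} \in \Pi^{n+m}_L(A)}$ contains $U_2$ and equals $\bigcup_{\bv \in S(\xi_1)} \Pi^m_{L_2}(A_\bv)$, where $S(\xi_1) \coloneqq (\Pi^n_{L_1})^{-1}(\xi_1) \cap B$; each summand is a $\Ds$ set of dimension $\leq p < \dim L_2$, hence nowhere dense in~$L_2$ by Lemma~\ref{res:meager-n}. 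The plan is to show, via a suitable generalization of Lemma~\ref{lem:dim-fiber-0} applied to~$B$, that for generic $\xi_1 \in U_1$ the indexing set $S(\xi_1)$ is nowhere dense, hence at most \psenum by Lemma~\ref{lem:nd-countable} and Fact~\ref{fact:ps-enum}; \tame-ness of~$\K$ then forces the union $\bigcup_{\bv \in S(\xi_1)} \Pi^m_{L_2}(A_\bv)$ to be nowhere dense in~$L_2$, contradicting that it contains the open box~$U_2$.

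The principal obstacle is part \enumref{en:add-leq}. The matroid argument of \enumref{en:add-geq} does not deliver the upper bound, since the inequality $\rk_\bv(\cv) \leq \rk(\cv/\bv)$ points the wrong way: one would need $\rk(\cv/\bv) \leq \rk_\bv(\cv) \leq p$. The geometric route outlined above hinges on a fiber-dimension bound for the projection of $B$ onto a coordinate subspace of dimension possibly smaller than $\dim B$; this should be obtainable by iterating Lemma~\ref{lem:dim-fiber-0} one coordinate at a time, or by first enlarging $L_1$ to a $q$-dimensional coordinate subspace in which $\Pi^n_{L_1}(B)$ still has interior and reducing to that setting. Verifying strong uniformity of the family of sections of $\Pi^{n+m}_L(A)$ parametrized by $\xi_1$, so that \tame-ness can be invoked to collapse the union of nowhere dense sets, is the real technical step.
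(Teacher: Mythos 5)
Your proof of part \enumref{en:add-geq} is exactly the paper's: pick $\bv \in B$ of maximal rank via Lemma~\ref{lem:rk-dim}\enumref{en:rk-dim-Ds}, pick $\cv \in A_\bv$ of maximal $\rk_\bv$-rank by the same lemma applied in $\K(\bv)$, and combine via $\rk(\bv\cv) = \rk(\bv) + \rk(\cv/\bv) \geq \rk(\bv) + \rk_\bv(\cv)$. Part~\enumref{en:add-eq} is indeed immediate from the other two. Your correct observation that the rank argument ``points the wrong way'' for the upper bound is precisely why the paper does something more elaborate there.

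For part \enumref{en:add-leq} your geometric route is genuinely different from the paper's, and the step you flag as ``the real technical step'' is indeed where the argument breaks, but there are in fact two separate problems hiding in it. First, the claim that for generic $\xi_1 \in U_1$ the fiber $S(\xi_1) = (\Pi^n_{L_1})^{-1}(\xi_1) \cap B$ is nowhere dense is simply false unless $\dim L_1 = q$: if $\dim L_1 < \dim B$, the generic fiber of $B$ over $L_1$ has positive dimension (this is the content of a Kuratowski--Ulam-type bound, not a smallness result). Your proposed fix of ``enlarging $L_1$'' doesn't obviously help, because $L = L_1 \times L_2$ is a coordinate subspace of dimension exactly $p+q+1$ witnessing $\dim(A) > p+q$; enlarging $L_1$ while keeping $L_2$ changes $L$ and there is no guarantee the new $\Pi^{n+m}_L(A)$ still has interior. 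What you actually get is a reduction to a smaller instance of the same lemma (with $q$ replaced by $q - \dim L_1$), so the argument is a hidden induction that you haven't set up, and whose base case $\dim L_1 = 0$ is not addressed. Second, even when $S(\xi_1)$ is nowhere dense of dimension~$0$, ``nowhere dense $\Rightarrow$ at most \psenum'' is not true: a $\Ds$ set of dimension~$0$ can be a Cantor-like continuum and fail to be a definable image of a discrete set. Lemma~\ref{lem:nd-countable} produces a discrete $Y$ with $\cl X = \cl Y$, not a definable surjection from a discrete set \emph{onto}~$X$, so Fact~\ref{fact:ps-enum} does not apply. You would need an entirely different mechanism to control the union $\bigcup_{\bv \in S(\xi_1)} \Pi^m_{L_2}(A_\bv)$.

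The paper proves \enumref{en:add-leq} by a triple induction on~$p$, on~$q$, and on the cardinality of an auxiliary set $R(A)$ of coordinate projections of $\K^m$, entirely within the $\Fcl$-matroid framework. The two base cases are $p=0$ (where $\dim(A_\bv) = 0$ for every $\bv$ forces every $\cv \in A_\bv$ into $\Fcl(\bv)$, hence $\rk(\bv\cv) = \rk(\bv) \leq q$) and $p = m$ (where subadditivity $\rk(\bv\cv) \leq \rk(\bv) + \rk(\cv)$ is enough). The general case decomposes $A$ according to which projections $\pi$ of the fibers $A_\bv$ to $p$-dimensional coordinate spaces have interior, pushing pieces into one of the base cases or reducing one of the three inductive parameters via repeated use of Lemma~\ref{lem:dim-fiber-0} and Lemma~\ref{lem:dim}. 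If you want to salvage your geometric approach, you would need to turn it into an explicit induction on $q$ (to handle small $\dim L_1$) and replace the appeal to pseudo-enumerability with an argument showing that the $\Ds$ set $\Pi^{n+m}_L(A)$ has meager fibers over a comeager set of $\xi_1$; as written, neither of those pieces is in place.
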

\begin{proof}
\Wlog, $\K$ is $\omega$-saturated and $A$ is \Zdefinable.

\enumref{en:add-geq}
By Lemma~\ref{lem:rk-dim}, there exists
$\bv \in B$ such that $\rk(\bv) \geq q$.
By Lemma~\ref{lem:rk-dim} again, applied to the matroid $\Fcl_{\bv}$, 
there exists $\cv \in A_{\bv}$, such that $\rk_{\bv}(\cv) \geq p$.
Thus,
\[
\rk(\bv \cv) = \rk(\bv) +  \rk(\cv/ \bv) \geq \rk(\bv) +  \rk_{\bv}(\cv) \geq
p + q.
\]
The conclusion follows by applying Lemma~\ref{lem:rk-dim} a third time.

\enumref{en:add-leq}
First, we do the case when $p = 0$.
Let $\pair{\bv, \cv} \in A$; by Lemma~\ref{lem:rk-dim}, it suffices to show
that $\rk(\bv \cv) \leq p$.
Since $p = 0$, $\cv \in \Fcl(\bv)$, and therefore
$\rk(\bv \cv) = \rk(\bv)$.
By Lemma~\ref{lem:rk-dim}, $\rk(\bv) \leq p$, and we are done.

Next, we do the case when $p = m$.
It suffices to show that, for every $\pair{\bv, \cv} \in B \times \K^p$,
$\rk(\bv \cv) \leq p + q$.
Since $\bv \in B$, $\rk(\bv) \leq q$.
Since $\cv \in \K^p$, $\rk(\cv) \leq p$.
Thus,
\[
\rk(\bv \cv) = \rk(\bv) + \rk(\cv/\bv) \leq \rk(\bv) + \rk(\cv) 
\leq p + q.
\]

Now we do the general case by induction on 
$q$ and $p$.
Define $\Pi(m,p)$ be the set of orthogonal projection from $\K^m$ to some
$p$-dimensional coordinate space.
For every $\pi \in \Pi(m, p)$, let
$B(\pi) \coloneqq \set{\bv \in B: \dim(\pi(A_{\bv})) \geq p}$,
and $B(0) \coloneqq B \setminus \bigcup_{\pi \in \Pi(m,p)} \cll(B(\pi))$.
Correspondingly, $A(\pi) \coloneqq A \cap (B(\pi) \times \K^m)$ and
$A(0) \coloneqq A \cap(B(0) \times \K^m)$.
Let $\Pi'(m, p) \coloneqq \Pi(n,m) \cup \set 0$.
For every $\pi \in \Pi'(m, p)$, $A(\pi)$ is a $\Ds$ set.
Moreover, $A \subseteq \bigcup_{\pi \in \Pi'(m, p)} \cll(A(\pi))$;
thus, it suffices to prove that, for every $\pi \in \Pi'(m,p)$,
$\dim(\cll(A(\pi))) \leq p + q$.
Thus, by Lemma~\ref{lem:dim}, 
it suffices to show that, for every $\pi \in \Pi'(m,p)$,
\begin{equation}\label{eq:dim-additive}
\dim(A(\pi)) \leq p + q.
\end{equation}
Fix $\pi \in \Pi'(m,p)$.
If $\dim(B(\pi)) < q$, then  \eqref{eq:dim-additive} follows by induction 
on~$q$; therefore, we can assume $\dim(B(\pi)) = q$.

If $\pi = 0$, then,
by definition of dimension,
$\dim(A(0)_{\bv}) \leq p - 1$, for every $\bv \in B(0)$.
Therefore, by induction on $p$ we have $\dim(A(0)) \leq p + q - 1$.

Assume now that $\pi \in \Pi(m,p)$, and consider $A(\pi)$;
\wloG, $\pi = \Pi^m_p$.
Notice that the assumption in \enumref{en:add-leq} implies that,
for every $\bv \in B(\pi)$, $\dim(A(\pi)_{\bv}) = p$.
Let $D(\pi) \coloneqq \Pi^{n+m}_{n+p}(A(\pi))$.
Let $\bv \in B(\pi)$.
Notice that $D(\pi)_\bv = \pi\Pa{A(\pi)_\bv}$.
By the case $m = p$, $\dim(D(\pi)) \leq p + q$, and, by~\enumref{en:add-geq}, 
we have $\dim(D(\pi)) = p + q$.
Let
\[
D(\pi)'_{\bv} \coloneqq \set{\ev \in D(\pi)_{\bv}: \dim(A(\pi)_{\bv,\ev}) > 0},
\qquad A(\pi)' \coloneqq A(\pi) \cap (D(\pi)' \times \K^{m-p}).
\]
Let $D(\pi)'' \coloneqq D(\pi) \setminus \cll(D(\pi)')$ and 
$A(\pi)'' \coloneqq A(\pi) \cap (D(\pi)'' \times \K^{m-p})$.
By Lemma~\ref{lem:dim-fiber-0}, $A(\pi)'$ and $A(\pi)''$ 
are both $\Ds$ sets, and, 
by Lemma~\ref{lem:dim}, it suffices to show that 
$\dim(A(\pi)') \leq p + q$ and $\dim(A(\pi)'') \leq p + q$.

For every $\dv \in D(\pi)''$, $\dim(A_{\dv}) = 0$; thus,
by the case $p = 0$, applied to $A(\pi)''$ and $D(\pi)''$ 
instead of $A$ and~$B$,
$\dim(A(\pi)'') = \dim(D(\pi)'') \leq \dim(D(\pi)) \leq p + q$.

Let $R(A) = R(A; n, m, p)$ be the set of projections $\rho \in \Pi(m,p)$, such
that $\dim(\rho(A_{\bv})) \geq p$ for at least one $\bv \in B$.
We conclude by induction on the cardinality of~$R(A)$.
By Lemma~\ref{lem:dim-fiber-0}, $\dim(D(\pi)'_{\bv}) <  p$ for every $\bv \in B$.
Notice that $D(\pi)'_{\bv} = \pi\Pa{A(\pi)'_{\bv}}$,
and therefore $\pi \in R(A) \setminus R(A(\pi)')$;
by our inductive hypothesis, $\dim(A(\pi)') \leq p + q$, and we are done.
\end{proof}

\begin{corollary}\label{cor:dim-function}
Let $B \subseteq \K^n$ be a $\Ds$ set.
Let $f: B \to \K^m$ be definable and continuous.
Then, $\dim(f(B)) \leq \dim(B)$.
If, moreover, $f$ is finite-to-one, then $\dim(f(B)) = \dim(B)$.
\end{corollary}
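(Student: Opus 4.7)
The plan is to pass through the graph $\Gamma(f) \subseteq \K^{n+m}$ and apply additivity of dimension. The first step is to verify that $\Gamma(f)$ is a $\Ds$ set: writing $B = \bigcup_{i\in N} B(i)$ with each $B(i)$ \dcompact, the restriction $f \rest B(i)$ is continuous on a \dcompact domain, so the image of $B(i)$ under the continuous map $\x \mapsto \pair{\x, f(\x)}$ is \dcompact, namely $\Gamma(f \rest B(i))$. The strongly uniform family $\Pa{\Gamma(f\rest B(i)) : i \in N}$ unions to~$\Gamma(f)$, which is therefore a~$\Ds$ set. The same argument applied in the target shows that $f(B) = \bigcup_i f(B(i))$ is also~$\Ds$.

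Next I would compute $\dim(\Gamma(f))$ by applying Lemma~\ref{lem:dim-additive}\enumref{en:add-eq} to the projection $\Pi^{n+m}_n$: its image is~$B$ and every fiber is a singleton, so $\dim(\Gamma(f)) = \dim(B) + 0 = \dim(B)$. For the first assertion, I would invoke the trivial observation that a coordinate projection never increases dimension (immediate from the definition, since a composition of coordinate projections is again a coordinate projection); applied to $\Pi^{n+m}_m(\Gamma(f)) = f(B)$, this gives $\dim(f(B)) \leq \dim(\Gamma(f)) = \dim(B)$.

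For equality under the finite-to-one hypothesis, I would reorder coordinates to form the reversed graph $\Gamma'(f) \coloneqq \set{\pair{f(\x), \x} : \x \in B} \subseteq \K^{m+n}$, which is clearly $\Ds$ and has the same dimension as~$\Gamma(f)$. Its projection onto the first $m$ coordinates is~$f(B)$, and the fiber over $\bv \in f(B)$ is the finite set $f^{-1}(\bv)$, hence of dimension zero. Lemma~\ref{lem:dim-additive}\enumref{en:add-eq} then yields $\dim(B) = \dim(\Gamma'(f)) = \dim(f(B))$.

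The main (essentially only) delicate point is showing that $\Gamma(f)$ is~$\Ds$, which relies on both the continuity of~$f$ and the $\Ds$-ness of~$B$; everything else is bookkeeping built on Lemma~\ref{lem:dim-additive}.
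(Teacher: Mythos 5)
Your proposal is correct and essentially matches the paper's approach --- the published proof is just the one line ``Apply Lemma~\ref{lem:dim-additive} to $A \coloneqq \Gamma(f)$,'' and you have simply unpacked it, including the worthwhile verification (left implicit in the paper) that $\Gamma(f)$ is a $\Ds$ set via the decomposition of $B$ into \dcompact pieces $B(i)$ and the fact that images of \dcompact sets under definable continuous maps are \dcompact. One notational slip: with $\Gamma(f)=\set{\pair{\x,f(\x)}:\x\in B}$, the map $\Pi^{n+m}_m$ projects onto the \emph{first} $m$ coordinates and so yields a projection of~$B$, not $f(B)$; for the first inequality you want the projection onto the \emph{last} $m$ coordinates, or, more cleanly, pass straight to the reordered graph $\Gamma'(f)$ that you introduce for the finite-to-one case, which in fact handles both assertions at once via Lemma~\ref{lem:dim-additive}\enumref{en:add-geq} and~\enumref{en:add-eq}.
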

\begin{proof}
Apply Lemma~\ref{lem:dim-additive} to
$A \coloneqq \Gamma(f)$.
\end{proof}

\section{The real case}\label{sec:real}
In this section we will prove most of the theorems in the introduction
which are specific to expansions of $\Rbar$
(since they mention Lebesgue measure and Hausdorff dimension).
Given a set $X \subseteq \K^n$, we define 
$X - X \coloneqq \set{x -y : x, y  \in  X}$.
Remember that $\Rcal$ is a \tame expansion of the real field.

\begin{lemma}\label{lem:tame-measure-1}
Let $C \subseteq \R$ be a nonempty $\Ds$ set.
Then, $\dim(C) = \dimH(C)$.
\end{lemma}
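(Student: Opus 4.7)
The plan is to mirror the proof of $\enumref{en:main3-imin} \Rightarrow \enumref{en:main3-Haus}$ in Theorem~\ref{mainthm:3}, invoking tameness of $\Rcal$ in place of the \iminimal hypothesis at the crucial step.

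First I would dispose of the trivial case: if $C$ has nonempty interior, then $\dim(C)=1$, and since $C$ contains an open interval, $\dimH(C)=1$ as well, so equality holds. Hence I may assume $C$ has empty interior, and the task reduces to showing $\dimH(C)=0$. Since $C$ is a $\Ds$ set with empty interior, Lemma~\ref{res:meager-n}\enumref{en:equiv-n-nd} gives that $C$ is nowhere dense in~$\R$, so $\cl C$ is nowhere dense too; as $\dimH$ is monotone with respect to inclusion, it suffices to prove $\dimH(\cl C)=0$. Next, Lemma~\ref{lem:nd-countable} applied to $\cl C$ produces a definable discrete set $Y \subset \R$ with $\cl C=\cl Y$; in particular $Y$ is dense in $\cl C$.

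The main step is then a contradiction argument exactly parallel to the one used in Theorem~\ref{mainthm:3}. Suppose $\dimH(\cl Y)>0$. By \cite{EM01}*{Lemma~1} there exist $n \in \N$ and a linear map $T\colon \R^n \to \R$ such that $T\bigl((\cl Y)^n\bigr)$ has nonempty interior. Since $Y$ is dense in $\cl Y$, the power $Y^n$ is dense in $(\cl Y)^n$, and continuity of $T$ yields that $T(Y^n)$ is dense in $T\bigl((\cl Y)^n\bigr)$; hence $T(Y^n)$ is somewhere dense in~$\R$. On the other hand, $Y$ being discrete forces $Y^n$ to be a definable discrete subset of $\R^n$, so $T\rest Y^n\colon Y^n \to \R$ is a definable function on a discrete set; because $\Rcal$ is \tame (Definition~\ref{def:tame-R}), its image $T(Y^n)$ must be nowhere dense, the desired contradiction.

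The only non-routine ingredient is the Edgar--Miller result producing a linear projection with large image from a set of positive Hausdorff dimension; every other step is either the case split, an invocation of Lemma~\ref{res:meager-n} or Lemma~\ref{lem:nd-countable}, or a direct use of the definition of tameness. I expect no serious obstacle here: the passage from the \iminimal setting of Theorem~\ref{mainthm:3} to the \tame setting is clean, since tameness is tailored precisely so that the image of a discrete definable set under a definable function is nowhere dense.
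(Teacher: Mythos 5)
Your proof is correct and follows essentially the same route as the paper, which simply says ``Conclude as in the proof of Theorem~\ref{mainthm:3}''; you have correctly unpacked that instruction, in particular by replacing the countability-contradicts-i-minimality step with the direct use of tameness (discrete $Y^n$, definable linear $T$, so $T(Y^n)$ nowhere dense).
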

\begin{proof}
For every set, $\dimH(C) \geq \dim(C)$.
Thus, we have to prove that if $C$ is nowhere dense, then $\dimH(C) = 0$.
Since $\dim(C) = \dim(\cl C)$, \wloG $C$ is closed.
Conclude as in the proof of Theorem~\ref{mainthm:3}.
\end{proof}

\begin{lemma}\label{lem:tame-measure-n}
Let $D \subseteq \R^n$ be a $\Ds$ set.
Then, $\dim(D) < n$ iff $\cl D$ is null.
\end{lemma}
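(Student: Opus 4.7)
The plan is to split into two directions, the easy one being direct and the harder one proceeding by induction on~$n$.

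The direction ``$\cl D$ null $\Rightarrow \dim(D) < n$'' is immediate: a nonempty open subset of $\R^n$ has positive Lebesgue measure, so if $\cl D$ is null then $\cl D$ has empty interior, giving $\dim(\cl D) < n$, and by monotonicity $\dim(D) \leq \dim(\cl D) < n$.

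For the converse I would induct on~$n$. Using Lemma~\ref{lem:dim} to replace $D$ by $\cl D$, I may assume $D$ is closed. The base case $n = 1$ is Lemma~\ref{lem:tame-measure-1}, since Hausdorff dimension $< 1$ forces Lebesgue measure~$0$. For the inductive step ($n \geq 2$), project onto the first $n-1$ coordinates and set
\[
E \coloneqq \set{x \in \R^{n-1}: \dim(D_x) > 0},
\]
which is $\Ds$ by Lemma~\ref{lem:dim-fiber-p}. The crucial step is to prove $\dim(E) < n-1$. To that end, let $D' \coloneqq D \cap (E \times \R)$; writing $E$ as a countable union of \dcompact sets $F_i$, one has $D' = \bigcup_{i,k} D \cap (F_i \times [-k,k])$, a countable union of \dcompact sets, so $D'$ is $\Ds$. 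Since $\Pi^n_{n-1}(D') = E$ and $D'_x = D_x$ has dimension $\geq 1$ for every $x \in E$, Lemma~\ref{lem:dim-additive}\eqref{en:add-geq} yields $\dim(D') \geq 1 + \dim(E)$; combined with $D' \subseteq D$ this forces $\dim(E) \leq \dim(D) - 1 < n - 1$. Then Lemma~\ref{res:meager-n}\eqref{en:equiv-n-nd} implies that $E$ is nowhere dense in $\R^{n-1}$, and the inductive hypothesis applied to the closed set $\cl E$ (of dimension $< n-1$) gives that $\cl E$ is null in $\R^{n-1}$.

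To finish, for every $x \in \R^{n-1} \setminus \cl E$ the fiber $D_x \subseteq \R$ is closed with $\dim(D_x) = 0$, so by Lemma~\ref{lem:tame-measure-1} we have $\mu_1(D_x) = 0$. Since $D$ is closed (hence Borel), Fubini--Tonelli yields
\[
\mu_n(D) = \int_{\R^{n-1}} \mu_1(D_x)\, dx = 0,
\]
as the integrand vanishes off the null set $\cl E$. The main obstacle will be the dimension estimate $\dim(E) < n-1$: verifying that $D'$ is genuinely $\Ds$ and that Lemma~\ref{lem:dim-additive} applies cleanly is the one nontrivial point, after which everything reduces to a standard Fubini argument combined with the one-dimensional statement.
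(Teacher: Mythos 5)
Your proof is correct and follows essentially the same route as the paper: induct on $n$, reduce to $D$ closed via Lemma~\ref{lem:dim}, isolate the set of base points in $\R^{n-1}$ whose fiber has positive dimension, show it is nowhere dense and hence (by induction) null, and finish with Fubini. The only difference is cosmetic: the paper gets the nowhere-density of that bad set directly from Lemma~\ref{lem:dim-fiber-0} (after identifying ``positive measure fiber'' with ``positive dimension fiber'' via the case $n=1$), whereas you obtain the sharper bound $\dim(E) \le \dim(D) - 1 < n-1$ via Lemma~\ref{lem:dim-additive} applied to the auxiliary $\Ds$ set $D'$ and then invoke Lemma~\ref{res:meager-n}; both routes land in the same place.
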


\begin{proof}
The ``if'' direction is clear.
So, assume that $D$ is nowhere dense; we have to show that $\cl D$ is null.
We proceed by induction on~$n$.
The case $n = 1$ is Lemma~\ref{lem:tame-measure-1}.
Assume that we have already proven the conclusion for $n-1$.
Assume that $D$ is nowhere dense. 
Since $\dim(\cl D) = \dim(D)$, \wloG $D$ is closed.
Let $F \coloneqq \set{\cv \in \K^{n-1}: D_{\cv} \text{ has positive Lebesgue measure}}$.
By the case $n = 1$,
$F = \set{\cv \in \K^{n-1}: \dim(D_\cv) = 1}$.
By  Lemma~\ref{lem:dim-fiber-0}, since $\dim(D) \leq n-1$,
we have that $F$ is nowhere dense in $\K^{n-1}$.
By inductive hypothesis, $F$ is null.
By Fubini's theorem, $D$ is null, and we are done.
\end{proof}

We now prove the ``moreover'' clause in Theorem~\ref{mainthm:1}.
We employ techniques similar to the one used in \cite{EM01}*{Lemma~1}.
We need some preliminary results from geometric measure theory.
\begin{fact}[\cite{mattila}*{Theorem~8.10}]\label{fact:H-additive}
Let $A \subseteq \R^n$ and $B \subseteq \R^m$ be $\Fs$ sets.
Then, $\dimH(A \times B) \geq \dimH(A) + \dimH(B)$.
\end{fact}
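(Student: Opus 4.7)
The plan is to exhibit, on $A \times B$, a positive Borel measure whose ball growth is controlled by $r^{s+t}$, and then invoke the mass distribution principle. Fix $s < \dimH(A)$ and $t < \dimH(B)$; it suffices to prove $\dimH(A \times B) \geq s + t$, since we can then let $s \nearrow \dimH(A)$ and $t \nearrow \dimH(B)$.

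Since $A$ is $\Fs$, it is Borel (in particular analytic), and $\mathcal{H}^s(A) > 0$ by the choice of $s$. By Frostman's lemma, in the form valid for analytic sets, there exists a non-zero finite Borel measure $\mu$ supported on a compact subset of $A$ together with a constant $C_1 > 0$ such that $\mu(B(x, r)) \leq C_1 r^s$ for all $x \in \R^n$ and $r > 0$. Analogously, choose $\nu$ supported on a compact subset of $B$ with $\nu(B(y, r)) \leq C_2 r^t$. This invocation of Frostman is the only step where the $\Fs$ hypothesis is used.

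Now consider the product $\mu \times \nu$, a non-zero finite Borel measure supported on a subset of $A \times B$. With respect to the sup norm on $\R^{n+m}$ (equivalent to the Euclidean norm up to a fixed constant), every ball factors as a product of balls, so
\[
(\mu \times \nu)\bigl(B((x, y), r)\bigr) \leq \mu(B(x, r)) \cdot \nu(B(y, r)) \leq C_1 C_2 \, r^{s+t}.
\]
The mass distribution principle then yields $\dimH(A \times B) \geq s + t$, as required. The main obstacle is the appeal to Frostman's lemma: the existence of a mass distribution with the prescribed upper density requires the descriptive-set-theoretic regularity afforded by the $\Fs$ hypothesis (analyticity suffices). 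Once the measures $\mu, \nu$ are in hand, the product construction and the ball estimate are routine, and the conclusion follows by taking suprema in $s$ and $t$.
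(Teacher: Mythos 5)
The paper cites this inequality directly from Mattila and does not supply its own proof, so there is nothing internal to compare against; your argument (Frostman's lemma to produce measures with the prescribed ball-growth, the product measure, a sup-norm ball factorization, and the mass distribution principle, followed by letting $s \nearrow \dimH A$ and $t \nearrow \dimH B$) is exactly the standard textbook proof of this fact and is correct. The only thing worth tidying is the degenerate case $\dimH A = 0$ (where one cannot pick $s<\dimH A$): there you should instead observe directly that $\dimH(A\times B)\geq\dimH(\{a\}\times B)=\dimH B$ for any $a\in A$.
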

\begin{fact}[\cite{mattila}*{Corollary~9.8}]\label{fact:H-projection}
Let $A \subseteq \R^n$ be an $\Fs$ set.
Assume that $\dimH(A) \linebreak[0]> m$ (with $m < n$).
Then, there exists an $m$-dimensional linear subspace $L \subseteq \R^n$,
such that $\Pi^n_L(A)$ is not null.
\end{fact}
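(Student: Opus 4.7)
The plan is the potential-theoretic (energy) approach of Kaufman--Mattila, combined with the Fourier characterization of absolutely continuous measures. First I would reduce to the case where $A$ is compact: since $A$ is $\Fs$, write $A = \bigcup_k A_k$ with each $A_k$ compact; because $\dimH(A) = \sup_k \dimH(A_k)$, some $A_k$ has $\dimH(A_k) > m$, and it is enough to handle this $A_k$. Then, fix $s$ with $m < s < \dimH(A_k)$ and apply Frostman's lemma to obtain a nonzero finite Borel measure $\mu$ supported on $A_k$ with $\mu(B(x,r)) \leq C r^s$ for every ball; this yields finite $s$-energy $I_s(\mu) \coloneqq \iint |x-x'|^{-s} d\mu(x) d\mu(x') < \infty$, and since $A_k$ is bounded, also finite $m$-energy $I_m(\mu) < \infty$.

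The target is then: for $\gamma$-almost every $L \in G(n,m)$ (with $\gamma$ the $O(n)$-invariant probability measure on the Grassmannian) the pushforward $\mu_L \coloneqq (\Pi^n_L)_* \mu$ has an $L^2$ density with respect to Lebesgue measure on $L$. Since $\mu_L$ is a nonzero finite measure supported on $\Pi^n_L(A_k)$, this immediately forces $\Pi^n_L(A_k)$ to have positive Lebesgue measure, and hence to be non-null, for at least one (indeed $\gamma$-a.e.) $L$. To get the $L^2$ density I would use the identity $\widehat{\mu_L}(\xi) = \hat\mu(\xi)$ for $\xi \in L$ and integrate the resulting $L^2$-norm over the Grassmannian:
\[
\int_{G(n,m)} \int_L |\hat\mu(\xi)|^2\, d\xi\, d\gamma(L) = c_{n,m} \int_{\R^n} |\hat\mu(\xi)|^2 |\xi|^{m-n} d\xi = c'_{n,m}\, I_m(\mu) < \infty.
\]

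The crucial chain of equalities rests on two classical ingredients. The first equality is a Grassmannian Fubini formula: for any positive measurable $f$ on $\R^n$, the assignment $f \mapsto \int_{G(n,m)} \int_L f\, d\mathcal L_L\, d\gamma(L)$ defines an $O(n)$-invariant Radon measure on $\R^n$ that scales as $|t|^m$ under $f(\cdot) \mapsto f(t\,\cdot)$, and the unique such measure up to a constant is $|\xi|^{m-n} d\xi$. The second equality is the classical $s$-energy--Fourier identity $I_s(\mu) = c \int |\hat\mu(\xi)|^2 |\xi|^{s-n} d\xi$ applied at $s = m$. Once the chain is established, the right-hand side is finite, so $\int_L |\hat\mu|^2\, d\xi < \infty$ for $\gamma$-a.e. $L$; Plancherel then gives that $\mu_L$ has an $L^2$ density on $L$, completing the proof.

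The main obstacle is the Grassmannian Fubini identity: although symmetry and homogeneity alone pin down the weight $|\xi|^{m-n}$, one still has to verify that the constant $c_{n,m}$ is finite, which comes down to a polar-coordinate computation and is the unique place where the strict inequality $m < n$ is used (integrability fails when $m = n$). Everything else is a routine assembly of Frostman's lemma, Plancherel, and the Fourier characterization of $s$-energy.
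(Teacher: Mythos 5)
The paper states this as an external result, citing \cite{mattila}*{Corollary~9.8}, and gives no proof of its own, so there is no internal argument to compare against; your write-up is the standard Kaufman--Mattila energy/Fourier proof of Marstrand's projection theorem, and it is essentially correct. Two small inaccuracies are worth flagging. First, the Frostman bound $\mu(B(x,r))\leq C r^{s}$ does \emph{not} in general give $I_{s}(\mu)<\infty$ --- the $s$-energy of a Frostman measure at the boundary exponent $s$ can diverge --- but it does give $I_{t}(\mu)<\infty$ for every $0<t<s$ by the usual dyadic-annulus computation; since you chose $m<s$, you should deduce $I_{m}(\mu)<\infty$ directly from the Frostman bound rather than passing through $I_{s}$ (the boundedness of $A_{k}$ is a red herring in that step). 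Second, the constant $c_{n,m}$ in the Grassmannian averaging identity is finite even when $m=n$ (there $G(n,n)$ is a single point and $c_{n,n}=1$), so that is not where $m<n$ enters; the hypothesis $m<n$ is needed only so that $\dimH(A)>m$ is attainable at all. Neither point affects the soundness of the argument.
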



\begin{lemma}\label{lem:dim-H}
Let $X \subseteq \R^n$ be a nonempty $\Ds$ set.
Then, $\dim(X) = \dimH(X)$.
\end{lemma}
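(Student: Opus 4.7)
The inequality $\dimH(X) \geq \dim(X)$ is classical: any coordinate projection $\pi$ of $X$ with nonempty interior has $\dimH(\pi(X)) = \dim(\pi)$, and $\dimH$ does not increase under Lipschitz (in particular, linear) maps. So the plan is to assume, for contradiction, that $\dimH(X) > \dim(X) =: d$. Using Lemma~\ref{lem:dim} and the monotonicity of Hausdorff dimension, we may replace $X$ by $\cl X$ and assume $X$ is closed; and if $d = n$ then $X$ has nonempty interior and the equality is trivial, so we assume $d < n$.

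The core idea is to amplify the supposed gap $\dimH(X) - d > 0$ via Cartesian powers, along the lines of \cite{EM01}*{Lemma~1}. Choose an integer $k$ large enough that $k\bigl(\dimH(X) - d\bigr) > 1$ and $k(n - d) > 1$, and set $m \coloneqq kd + 1$, so that $m < nk$. Since $X$ is closed and hence $\Fs$, Fact~\ref{fact:H-additive} applied iteratively yields
\[
\dimH(X^k) \geq k\dimH(X) > kd + 1 = m.
\]
By Fact~\ref{fact:H-projection} there is an $m$-dimensional linear subspace $L \subseteq \R^{nk}$ such that $\Pi^{nk}_L(X^k)$ has positive $m$-dimensional Lebesgue measure in $L$. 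Writing $X = \bigcup_i X_i$ with each $X_i$ \dcompact, the power $X^k$ is a countable union of d-compact products, and since the image of a \dcompact set under a continuous map is \dcompact, $\Pi^{nk}_L(X^k)$ is a $\Ds$ subset of $L$. Identifying $L$ with $\R^m$ via any orthonormal basis, Lemma~\ref{lem:tame-measure-n} forces $\dim\bigl(\Pi^{nk}_L(X^k)\bigr) = m$ in this identification, so $\Pi^{nk}_L(X^k)$ has nonempty interior inside~$L$.

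To derive the contradiction, we transfer this to the dimension of $\Pi^{nk}_L(X^k)$ as a subset of $\R^{nk}$: since $L$ has dimension $m$ in $\R^{nk}$, at least one of the $\binom{nk}{m}$ orthogonal projections onto an $m$-dimensional coordinate subspace restricts to an isomorphism on $L$, hence maps the open subset of $L$ to an open set. Thus $\dim\bigl(\Pi^{nk}_L(X^k)\bigr) \geq m = kd + 1$ in the sense of Definition~\ref{def:dim}. On the other hand, $\Pi^{nk}_L$ is continuous, so Corollary~\ref{cor:dim-function} gives $\dim\bigl(\Pi^{nk}_L(X^k)\bigr) \leq \dim(X^k)$, and Lemma~\ref{lem:dim-additive}\eqref{en:add-eq}, iterated, yields $\dim(X^k) = k\dim(X) = kd$. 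Hence $kd + 1 \leq kd$, a contradiction. The main technical point to be careful with is the transfer between the Hausdorff/measure-theoretic world (which naturally lives on arbitrary linear subspaces, via Fact~\ref{fact:H-projection}) and the definable dimension (which is defined via coordinate projections), and this is handled by the generic-coordinate-projection argument for $L$ together with the observation that orthogonal projections of countable unions of d-compact sets remain $\Ds$.
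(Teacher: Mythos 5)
Your proof is correct and follows essentially the same route as the paper's: both amplify the supposed gap via Cartesian powers using Fact~\ref{fact:H-additive}, project onto a linear subspace with Fact~\ref{fact:H-projection}, and close the contradiction via Lemma~\ref{lem:tame-measure-n} together with Corollary~\ref{cor:dim-function} and Lemma~\ref{lem:dim-additive}. The only difference is organizational: the paper isolates an intermediate claim (that $\dimH(Y) \leq \dim(Y) + 1$ for every $\Ds$ set $Y$) before applying the amplification, whereas you fold both steps into a single choice of~$k$.
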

\begin{proof}
\begin{prclaim}
Let $Y \subseteq \R^m$ be a $\Ds$ set, such that $\dim(Y) = p$.
Then, $\dimH(Y) \leq p + 1$.
\end{prclaim}
We proceed by induction on $m$ and~$p$.
If $p \geq m - 1$, the result is clear.
Thus, \wloG $m > p + 1 \geq 1$.
Assume, for a contradiction, that $\dimH(Y) > p + 1$.
Thus, by Fact~\ref{fact:H-projection}, $Y' \coloneqq \Pi^n_L(Y)$ is not null,
for some $(p + 1)$-dimensional linear subspace~$L \subseteq \R^m$.
By Lemma~\ref{lem:tame-measure-n}, $\dim(Y') = p + 1$;
since $\dim(Y) \geq \dim(Y')$, we have a contradiction, and the claim is
proven.

The inequality $\dimH(X) \geq \dim(X)$ is true for any set $X \subseteq \R^n$;
thus, it suffices to prove the opposite inequality.
Assume, for a contradiction, that $\dimH(X) = \dim(X) + \eps$, with $\eps > 0$.
Let $m \in \N$ such that $m > 1/\eps$.
Let $Y \coloneqq X^m \subseteq \R^{nm}$.
By Fact~\ref{fact:H-additive}, $\dimH(Y) \geq n \dimH(X) \geq n p + 1 =
\dim(Y) + 1$, contradicting the claim.
\end{proof}


\begin{proof}[Proof of Theorem~\ref{mainthm:imin-dim-n}]
Let $D \coloneqq \cl C$.
By Lemma~\ref{lem:dim-H} (applied to~$D$),
$\dimH(D) = \dim(D) = \dim(C) \leq \dimH(C) \leq \dimH(D)$.
We also have to show that $C$ is definably almost open and Lebesgue measurable.
Notice that $E \coloneqq C \setminus \inter C$ is definable and has empty
interior.
Thus, by \cite{tame:3}*{\S3}, $E$ is nowhere dense.
Moreover, by Lemma~\ref{lem:tame-measure-n}
$E$ is null.
Thus, $C = \inter C \cup E$, where $\inter C$ is a definable open set and 
$E$ a definable set which is nowhere dense and null.
\end{proof}

\begin{proof}[Proof of Theorem~\ref{mainthm:sparse}]
Assume that $\pair{\Rcal, E}$ does not define~$\N$ (thus, $\pair{\Rcal, E}$ is
\tame).
Let $B \subset \R$ be definable in $\pair{\Rcal, E}^\#$ 
and assume that $B$ has no interior.
We must prove that $B$ is nowhere dense.

First, we do the case when $n = 1$.
By \cite{FKMS}*{1.11}, there exists $f: \R^m \to \R$ definable in~$\Rcal$,
such that $B \subseteq \cll(f(E^m))$.
Since we assumed that $E$ is closed and $0$-dimensional and $\pair{\Rcal, E}$
is \tame, we have that $\dim(f(E^n)) = 0$; thus, $f(E^n)$ is nowhere dense, and
we are done.

Assume now that $n \geq 1$.
Let $F$ be the union of the closures of the coordinate projections of~$E$:
notice that $F$ is closed, nowhere dense, and definable in $\pair{\Rcal, E}$,
and that $\pair{\Rcal, E}^\# = \pair{\Rcal, F}^\#$.
We conclude by applying the case $n =1$ to~$F$.
\end{proof}

\begin{question}[C. Miller]
Let $\Rc$ be an expansion of $\Rbar$.
Are the following equivalent?
\begin{enumerate}
\item\label{en:ques-imin} 
$\mathcal R$ is \iminimal;
\item\label{en:ques-Minkowski} 
every definable subset of $\R$ either has interior or has Minkowski 
upper dimension~$0$.
\end{enumerate}
\end{question}
In the above question, it is clear that \enumref{en:ques-Minkowski} 
implies~\enumref{en:ques-imin}; however, we don't
even know if \dminimal structures satisfy~\enumref{en:ques-Minkowski}.

\section{Continuous and differentiable functions}
Notice that we have little control on general definable functions:
there are examples of \tame expansions of $\R$ which define a function
$f: \R \to \R$ whose graph is dense in $\R^2$,  and such that $f$ is
discontinuous at every point: see \cite{vdd-dense}*{p.~62}.
However, continuous functions are much better behaved.

\begin{lemma}\label{lem:small-image}
Let $C \subseteq \K^n$ be definable and nowhere dense.
Let $f: \K^n \to \K^m$ be definable and continuous.
If $m \geq n$, then $f(C)$ is nowhere dense.
\end{lemma}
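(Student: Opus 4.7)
The plan is to reduce to a situation where we can apply the dimension bound from Corollary~\ref{cor:dim-function} and then invoke Lemma~\ref{res:meager-n}\eqref{en:equiv-n-nd} to upgrade ``empty interior'' to ``nowhere dense''. The hypothesis $m \geq n$ will enter precisely when comparing $\dim(f(C))$ with $m$.

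First, I would replace $C$ by its closure: $\cl C$ is still definable and nowhere dense, $f$ is still continuous on $\cl C$, and $f(\cl C) \supseteq f(C)$, so it suffices to prove the result with $C$ closed. Then $C$ is automatically a $\Ds$ set (take $Y \coloneqq C \times \iset{0} \subseteq \K^{n+1}$). Since $C$ has empty interior in $\K^n$, we have $\dim(C) \leq n - 1$.

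Next, I would apply Corollary~\ref{cor:dim-function} to $f$ restricted to the $\Ds$ set $C$, obtaining $\dim(f(C)) \leq \dim(C) \leq n - 1$. Since $m \geq n$, this gives $\dim(f(C)) < m$, so $f(C)$ has empty interior in $\K^m$.

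Finally, I need that $f(C)$ is itself $\Ds$ so that Lemma~\ref{res:meager-n}\eqref{en:equiv-n-nd} applies. Write $C = \bigcup_{k \in N} C \cap [-k, k]^n$; each piece is closed and bounded, hence \dcompact, so by the fact that continuous images of \dcompact sets are \dcompact, $f(C) = \bigcup_{k \in N} f(C \cap [-k,k]^n)$ is a definable union of \dcompact sets, hence a $\Ds$ set. Applying Lemma~\ref{res:meager-n}\eqref{en:equiv-n-nd} to $f(C)$ then yields that $f(C)$ is nowhere dense, finishing the proof. No step looks genuinely hard here; the only subtlety is verifying that the image $f(C)$ lies in the good class ($\Ds$) so that the $\Ds$-specific version of ``empty interior iff nowhere dense'' from Lemma~\ref{res:meager-n} is available.
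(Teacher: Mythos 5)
Your proof is correct and follows essentially the same route as the paper's: close up $C$, observe $\dim(\cl C)\le n-1<m$, apply Corollary~\ref{cor:dim-function} to bound $\dim(f(\cl C))$, and upgrade ``empty interior'' to ``nowhere dense'' via the $\Ds$ structure of the image. The paper leaves the $\Ds$-ness of $f(\cl C)$ and the final appeal to Lemma~\ref{res:meager-n} implicit; you spell them out (one could also get $\Ds$-ness directly by projecting the closed graph $\Gamma(f)\cap(\cl C\times\K^m)$, avoiding the $\dcompact$ decomposition), but the substance is identical.
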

\begin{proof}
Let $B \coloneqq \cl C$.
By definition, $\dim(B) < m$.
By Corollary~\ref{cor:dim-function}, $\dim(f(B)) < m$, and therefore
$f(B)$ is nowhere dense.
\end{proof}

Thus, a Peano curve is not definable.
\begin{corollary}\label{cor:Peano-n}
Let $f: \K^n \to \K^m$ be definable and continuous.
If $m  > n$, then $f(\K^n)$ is nowhere dense.
\end{corollary}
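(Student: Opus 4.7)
The plan is to combine the dimension bound from Corollary~\ref{cor:dim-function} with the nowhere-dense dichotomy for $\Ds$ sets from Lemma~\ref{res:meager-n}. The point is that $f(\K^n)$ will turn out to be a $\Ds$ set of dimension strictly less than $m$, and once we know that, the conclusion is immediate.

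First I would observe that $\K^n$ is a $\Ds$ set (it is closed) and that the graph $\Gamma(f) \subseteq \K^{n+m}$ is closed since $f$ is continuous and defined everywhere. Therefore $f(\K^n)$ is the projection of a closed set onto the last $m$ coordinates, and by Remark~\ref{rem:formula} this makes $f(\K^n)$ a $\Ds$ set of $\K^m$.

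Next I would bound its dimension. Since $\K^n$ is a $\Ds$ set and $f$ is definable and continuous, Corollary~\ref{cor:dim-function} yields
\[
\dim\bigl(f(\K^n)\bigr) \leq \dim(\K^n) = n < m.
\]
In particular $f(\K^n)$ cannot have nonempty interior in $\K^m$, because any nonempty open subset of $\K^m$ has full dimension~$m$.

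Finally I would invoke Lemma~\ref{res:meager-n}\eqref{en:equiv-n-nd}: a $\Ds$ subset of $\K^m$ either has interior or is nowhere dense. Since $f(\K^n)$ is $\Ds$ with empty interior, it must be nowhere dense, completing the proof. The only subtle step is verifying that $f(\K^n)$ is actually a $\Ds$ set (so that Lemma~\ref{res:meager-n} applies), but this is a routine application of Remark~\ref{rem:formula} once we note that $\Gamma(f)$ is closed; no real obstacle arises.
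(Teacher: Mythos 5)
Your proof is correct and follows essentially the same route the paper takes: the paper presents this as an immediate consequence of Lemma~\ref{lem:small-image}, whose own proof is precisely the combination of Corollary~\ref{cor:dim-function} (to get $\dim(f(\K^n)) \leq n < m$) and Lemma~\ref{res:meager-n} (interior-or-nowhere-dense dichotomy for $\Ds$ sets) that you spell out. The only thing you add is the explicit verification that $f(\K^n)$ is $\Ds$ via the closedness of $\Gamma(f)$ and Remark~\ref{rem:formula}, which is the right justification.
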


\subsection{Differentiable functions}

Definable continuous functions are differentiable almost everywhere.
\begin{lemma}\label{lem:C-1}
Let $f: \K \to \K$ be definable and continuous.
Fix $p \in \N$.
Then, there exists $V \subseteq \K$ which is open, definable, and dense,
such that, 
\begin{enumerate}
\item\label{en:monotone} 
for every $I$ definably connected component of $V$, $f \rest I$ is either
constant, or strictly monotone;
\item\label{en:Cp}
$f$ is $\Cp$ on $V$.
\end{enumerate}
\end{lemma}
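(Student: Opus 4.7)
The plan is to construct $V \coloneqq V_1 \cap V_2$, where $V_1$ handles (1) and $V_2$ handles (2); both will be dense, open, and definable, so their intersection will remain dense by the DC Baire category theorem applied to $\K$.

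For (1), set
\[
V_1 \coloneqq \{x \in \K : f \text{ is locally constant or locally strictly monotone at } x\},
\]
which is open and definable, and whose components satisfy (1) (a connectedness argument rules out mixing constant with monotone behaviour within a single component, and local strict monotonicity globalizes to strict monotonicity on the component). To prove $V_1$ is dense, suppose for contradiction that some nonempty open interval $J$ is disjoint from $V_1$, so that $f \rest J$ is nowhere locally constant and nowhere locally monotone. Combining the extreme-value property for continuous definable functions on \dcompact sets with the classical fact ``no interior local extremum on a closed interval forces monotonicity'' (whose usual proof runs inside any DC expansion of an ordered field), every open subinterval of $J$ contains an interior local extremum of $f$; hence the set of local extrema is dense in $J$. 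On the other hand, define
\[
E^+_n \coloneqq \{x : f(y) \leq f(x) \text{ for all } y \in [x - 1/n,\, x + 1/n]\}, \qquad E^+ \coloneqq \bigcup_{n \in N,\, n > 0} E^+_n,
\]
so that $E^+$ is the set of local maxima of $f$. Each $E^+_n$ is closed, and the family $(E^+_n)_{n \in N,\, n > 0}$ is strongly uniform, whence $E^+$ is a $\Ds$ set. Moreover $E^+_n \cap J$ has empty interior: any two points $x, x' \in E^+_n \cap J$ with $|x - x'| < 1/n$ would satisfy $f(x) \leq f(x') \leq f(x)$, forcing $f$ to be constant on any open box contained in $E^+_n \cap J$ and contradicting the hypothesis on $J$. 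Thus $E^+ \cap J$ is $\Ds$ with empty interior, hence definably meager, and by Lemma~\ref{res:meager-n} nowhere dense; the analogous statement for local minima contradicts the density of local extrema in $J$.

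For (2), I argue by induction on $p$. For $p = 1$, reduce via~(1) to a strictly monotone component $I$ of $V_1$ (constant components being trivial). There $f \rest I$ is a definable homeomorphism onto an interval, and a DC/tame analogue of Lebesgue's theorem for monotone functions (the sets on which the Dini derivatives of a monotone definable function disagree are $\Ds$ with empty interior, hence nowhere dense by Lemma~\ref{res:meager-n}) shows that the set of non-differentiability points of $f \rest I$ is nowhere dense. Applying Fact~\ref{fact:Fs-function} to $f'$, whose graph is $\Ds$, shows that $\Disc(f')$ is definably meager, hence nowhere dense; consequently $f$ is $C^1$ on a dense open subset of $I$. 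For the inductive step, apply the $p = 1$ case to $f'$ restricted to the $C^1$ open set just obtained, producing a dense open set on which $f'$ is $C^{p-1}$, i.e., on which $f$ is $\Cp$; intersect the resulting set with $V_1$ to finish. The principal obstacle is the density of $V_1$ above, which is handled by combining classical real-analytic reasoning (density of local extrema) with the tameness machinery packaged in Fact~\ref{fact:meager} and Lemma~\ref{res:meager-n}; a secondary obstacle is the DC analogue of Lebesgue's differentiation theorem for monotone functions, which is Miller-style and can be traced through~\cite{tame:2}.
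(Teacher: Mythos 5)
Your part~(\ref{en:monotone}) is done by a different route than the paper. Where the paper cites \cite{miller}*{Thm.~3.3} together with $\Ds$-Choice, you build $V_1$ as the locus of local constancy/local strict monotonicity and control its density by bounding the set of local extrema via the strongly uniform family $(E^+_n)_n$. That argument is plausible in a DC structure (extreme value property on \dcompact intervals, the classical ``no interior local extremum forces monotonicity'' argument, then Lemma~\ref{res:meager-n} on the $\Ds$ sets $E^\pm$), and it is attractive because it is self-contained; the paper's route is shorter but leans on external Miller machinery.

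Part~(\ref{en:Cp}) is where there is a genuine gap. You invoke, as a black box, ``a DC/tame analogue of Lebesgue's theorem for monotone functions,'' asserting that the set of points where the Dini derivatives of a monotone definable function disagree is $\Ds$ with empty interior, and wave at a Miller-style source. Neither half of that assertion is available off the shelf. The definability/complexity claim is already problematic: the sets $\{x: D^+f(x) > D_-f(x)\}$ are $\forall\exists\forall$-defined (morally $\Pi^0_3$), not visibly $\Ds$, and the classical proofs of Lebesgue's theorem (Vitali covering, Rising Sun) draw essentially on Lebesgue measure and do not transfer to general DC structures. This missing step is exactly the substance of the paper's proof: the paper works not with Dini derivatives but with the \dcompact set $G$ of cluster values of the difference quotient, defines $g_l(x)=\min(G_x)$, $g_r(x)=\max(G_x)$ whose graphs are definably $\Gdelta$, obtains their continuity off a nowhere dense set from Lemma~\ref{lem:compact-uniformization}, and then shows $\set{x: g_l(x)<g_r(x)}$ has empty interior by the auxiliary function $h(x)=f(x)-cx$ and an appeal to part~(\ref{en:monotone}) (a constant or strictly monotone $h$ contradicts $g_l<c<g_r$). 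That is the heart of the lemma, and your write-up does not supply it. A secondary issue: to deduce $\Disc(f')$ meager you appeal to Fact~\ref{fact:Fs-function}, asserting $\Gamma(f')$ is $\Ds$; this is not clear, and the natural tool is Fact~\ref{fact:first-class} (derivative of a definable function, where it exists, is a pointwise limit of a definable family of continuous functions), or simply the continuity of $g_l,g_r$ already obtained. You should replace the black-box appeal to ``Lebesgue'' by the $G$/$g_l$/$g_r$/$h$ argument, or prove the Dini-derivative claim from scratch in a way that works in an arbitrary DC expansion.
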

\begin{proof}[Proof of Lemma~\ref{lem:C-1}]
The proof of \enumref{en:monotone} follows easily as in
\cite{miller}*{Thm.~3.3}, by using $\Ds$-Choice.

Now we prove \enumref{en:Cp}.
It suffices to prove the case when $p=1$.
Let $F$ be the closure of
\[
\set{\pair{t,x,z}: t \neq 0, z = \frac{f(x+t) - f(x)}t}
\]
inside $\K \times \K \times \K_\infty$, where $\K_\infty = \K \cup \set{\pm
  \infty}$.
Let $G \coloneqq F_0 = \set{\pair{x,z}: \pair{0,x,z} \in F}$.
Define the functions $g_l, g_r: \K \to \K_\infty$,
$g_l(x) \coloneqq \min(G_x)$, $g_r(x) \coloneqq \max (G_x)$.
By Lemma~\ref{lem:compact-uniformization}, $g_r$ and $g_r$ are continuous
outside a nowhere dense definable set~$D$.
Let $I$ be a definably connected component of $\K \setminus \cl D$.
It suffices to show that, after maybe ignoring a nowhere dense definable set,
 $g_l = g_r$ on $I$ and that they are finite.
Let $X \coloneqq \set{x \in I: g_r(x) = + \infty}$.
Notice that $X$ must have empty interior, and thus it is nowhere dense.
Similarly, the set $\set{x \in I: g_r(x) = - \infty}$ is nowhere dense.
Thus, after shrinking $I$, we can assume that $g_r$ and $g_l$ assume only
finite values on~$I$.
Let $Y \coloneqq \set{x \in I: g_l(x) < g_r(x)}$.
It suffices to show that $Y$ has empty interior.
Assume not: by continuity, and since $g_l \leq g_r$,
there exists an open  subinterval $J \subseteq I$ and a constant $c \in \K$,
such that, for every $x \in J$, $g_l(x) < c < g_r(x)$.
Consider now the function $h(x) \coloneqq f(x) - c x$, $h: J \to \K$.
By \enumref{en:monotone}, after maybe shrinking $J$ to a smaller subinterval,
$h$ is either constant or strictly monotone.
However, $h$ constant  contradicts $g_r > c$,
$h$ strictly increasing  contradicts $g_l < c$,
and $h$ strictly decreasing  contradicts $g_r > c$.
\end{proof}

\begin{exercise}
Let $f: \K \to \K$ be definable and monotone (but not necessarily continuous).
Then, there exists a closed definable nowhere dense set $F$ such that $f$ is
$\Cone$ outside~$F$.
\end{exercise}

\begin{proof}[Proof of Theorem~\ref{mainthm:Cp}]
\enumref{en:Cp-n}
Same proof as in Lemma~\ref{lem:C-1}\enumref{en:Cp}.

\enumref{en:Sard}
Notice that $\Sigma_f$ is a $\Ds$ set.
The result follows easily from $\Ds$-choice: see \cite{tame:3}*{\S4} 
for details.
\end{proof}

\section{Further conjectures}

\begin{conjecture}\label{conj:dim}
Let $\K$ be \tame.
There is a dimension function $d$ in the sense of
in the sense of \cite{dries89}.
That is, $d$ is a function assigning to every definable set a number in
$\N \cup \set {-\infty}$, satisfying the following axioms:
for every definable sets $A$ and $B \subseteq \K^n$ and $C \subseteq \K^{n+1}$,
\begin{enumerate}[\rm({Dim} 1)]
\item $d(A) = - \infty$ iff $A = \emptyset$,
$d(\set{a}) = 0$ for each $a \in \K$,
$d(\K) = 1$.
\item $d(A \cup B) = \max(d(A), d(B))$;
\item $d(A^\sigma) = d(A)$ for each permutation $\sigma$ of
$\set{1, \dotsc, n}$.
\item Define $C(i) \coloneqq \set{\x \in \K^n: d(C_\x) = i}$, $i = 0, 1$.
Then, each $C(i)$ is definable, and
$d\Pa{C \cap (C(i) \times \K)} = d(C(i)) + i$, $i = 0,1$.
\end{enumerate}
Moreover, $d$ coincides with $\dim$ on $\Ds$ sets.
\end{conjecture}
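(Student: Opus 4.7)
The approach I would take is to define $d$ using the matroid $\Fcl$ and its associated rank function $\rk$ from Section~\ref{sec:dimension}. Fix a sufficiently saturated elementary extension $\K^* \succeq \K$, and for a definable $A \subseteq \K^n$ defined over parameters $\bv$, set
\[
d(A) := \max\set{\rk_{\bv}(\av) : \av \in A(\K^*)},
\]
with the convention $d(\emptyset) := -\infty$. The first task is to check that this does not depend on the choice of $\bv$ or of the saturated extension, which follows from the Exchange Property (Lemma~\ref{lem:matroid}\eqref{en:mat-EP}) together with standard saturation arguments. By Lemma~\ref{lem:rk-dim}\eqref{en:rk-dim-Ds}, $d$ then coincides with $\dim$ on $\Ds$ sets.

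With $d$ so defined, axioms (Dim 1)--(Dim 3) are quick verifications: $d(\iset{a}) = 0$ since $a \in \Fcl_a(\emptyset)$, and $d(\K) = 1$ by Lemma~\ref{lem:rk-dim}\eqref{en:rk-open}; the union axiom is immediate from the max in the definition; and the permutation axiom holds because $\rk$ depends only on the underlying set of coordinates.

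The main obstacle is (Dim 4). For a definable $C \subseteq \K^{n+1}$ one must show that the sets $C(i) := \set{\x \in \K^n : d(C_\x) = i}$, $i = 0, 1$, are definable, and then establish $d(C \cap (C(i) \times \K)) = d(C(i)) + i$. The additivity formula, once $C(i)$ is known to be definable, should reduce to the matroid identity $\rk_{\bv}(\av c) = \rk_{\bv}(\av) + \rk_{\bv\av}(c)$ by extending Lemma~\ref{lem:dim-additive} beyond the $\Ds$ case. The genuine difficulty is the definability of $C(i)$: the condition $d(C_\x) = 0$ unfolds to ``every $y \in C_\x$ lies in $\Fcl(\bv\x)$'', which is not manifestly first-order, since $\Fcl$ is itself defined by an external quantification over \Zdefinable $\Ds$ sets.

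My strategy for overcoming this obstacle would be to prove a ``$\Ds$\hyph approximation'' result: every definable $A \subseteq \K^n$ should agree, up to a definably meager error, with some $\Ds$ set $\tilde A$; \ie $A \Sdiff \tilde A$ is definably meager. Since definably meager sets are nowhere dense in \tame structures (Lemma~\ref{res:meager-n}) and ought to have strictly smaller rank, such an approximation would reduce (Dim 4) to the $\Ds$ case already handled by Lemmas~\ref{lem:dim-fiber-p} and~\ref{lem:dim-additive}. Establishing this approximation --- or some weaker replacement that still yields definability of $C(i)$ --- is likely the chief technical hurdle, and plausibly explains why the statement is phrased as a conjecture rather than a theorem.
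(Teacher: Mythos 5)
The statement you were asked to prove is not actually proved anywhere in the paper: it is stated as Conjecture~\ref{conj:dim}, and the author explicitly remarks that ``the above conjectures are open even for the case when $\K$ expands~$\Rbar$'', pointing only to \cite{tame:3} for special cases. So there is no proof in the paper to compare against, and you were right to suspect that the phrasing as a conjecture reflects a real obstruction.

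Your choice of $d$ via the $\Fcl$-rank $\rk$ and the reductions through Lemmas~\ref{lem:matroid}, \ref{lem:rk-dim} and~\ref{lem:dim-additive} are sound for $\Ds$ sets, and you correctly isolate the genuine gap: the first-order definability of the fiber sets $C(i)$ for \emph{arbitrary} definable $C$. However, the $\Ds$-approximation lemma you propose as the remedy --- every definable set differs from a $\Ds$ set by a definably meager set --- is in fact false in general for \tame structures, so that route would fail. Take $F \subset \R$ a countable real closed subfield and consider $\pair{\Rbar, F}$; by \S\ref{sec:pathology}(2) this structure is \tame, $F$ is definable, dense, and codense, and $F$ is not a $\Ds$ set. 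Suppose some $\Ds$ set $\tilde F$ satisfied that $F \Sdiff \tilde F$ is definably meager, hence nowhere dense by Lemma~\ref{res:meager-n}\eqref{en:equiv-n-meager}. Since $F$ is dense and $F \setminus \tilde F \subseteq F \Sdiff \tilde F$ is nowhere dense, $\tilde F$ would be dense, so by Lemma~\ref{res:meager-n}\eqref{en:equiv-n-nd} it would contain a nonempty open interval $I$. But then $I \setminus (F \Sdiff \tilde F) \subseteq F$, and $I$ minus a nowhere dense set is uncountable, contradicting the countability of $F$. So the approximation fails, and with it the reduction of (Dim~4) to the $\Ds$ case; a genuinely new idea is needed before the conjecture can become a theorem.
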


See \cite{tame:3} for several cases when we know that
the above conjecture holds.
Notice that if a function $d$ as in Conjecture~\ref{conj:dim} exists, it is
unique, and satisfies:
\begin{sentence}
If $X \subset \K$ is definable, then $d(X) = 1$ iff $F(X^4) = \K$, where
\[
F(x_1, x_2, y_1, y_2) \coloneqq
\begin{cases}
\frac{x_1 - x_2}{y_1 - y_2} & \text{if } y_1 \neq y_2;\\
0 & \text{otherwise}.
\end{cases}
\]
\end{sentence}
Conjecture \ref{conj:dim} implies the following conjecture.
\begin{conjecture}
Let $C$ be a $\Ds$ set, and $f: C \to \K^n$ be definable (but
not necessarily continuous).
Then, $\dim(f(C)) \leq \dim(C)$.
In particular, there is no surjective definable function between $\K^n$ and
$\K^{n+1}$.
\end{conjecture}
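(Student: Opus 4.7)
The plan is to derive the final conjecture as a consequence of Conjecture~\ref{conj:dim}, using the axioms (Dim 1--4) together with the ``moreover'' clause asserting that the abstract dimension $d$ coincides with $\dim$ on $\Ds$ sets. Fix such a function $d$. The key intermediate claim is that $\dim(X)\le d(X)$ for every definable $X\subseteq \K^n$, regardless of whether $X$ is $\Ds$; this is what lets us transfer the inequality $d(f(C))\le d(C)$ (which the axioms readily give) back to the geometric dimension.

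First I would establish two bookkeeping lemmas about~$d$. \emph{(i)} Coordinate projections do not increase~$d$: given $X\subseteq\K^{n+1}$ and $\Pi=\Pi^{n+1}_n$, applying Dim~4 to $X$ with $X(i)=\{\x:d(X_\x)=i\}$ gives $d(X)=\max(d(X(0)),d(X(1))+1)$, while $\Pi(X)=X(0)\cup X(1)$ and Dim~2 yield $d(\Pi(X))=\max(d(X(0)),d(X(1)))\le d(X)$. Iterating and combining with Dim~3 handles arbitrary coordinate projections. \emph{(ii)} An open box $U=I_1\times\cdots\times I_e\subseteq\K^e$ has $d(U)=e$: iterating Dim~4 (fibers are the open intervals $I_e$, so of $d$\hyph dimension~$1$ by Dim~1) gives $d(U)=d(I_1\times\cdots\times I_{e-1})+1$, and one concludes by induction from $d(\K)=1$.

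From \emph{(i)} and \emph{(ii)} the comparison $\dim(X)\le d(X)$ is immediate: if $\dim(X)=e$, some coordinate projection $\Pi^n_L(X)$ onto an $e$\hyph dimensional coordinate space $L$ has nonempty interior, hence contains an open box~$U$ with $d(U)=e$, so by Dim~2 and \emph{(i)} we get $e=d(U)\le d(\Pi^n_L(X))\le d(X)$.

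Now for the main argument: let $C\subseteq\K^m$ be $\Ds$ and $f:C\to\K^n$ be definable. Consider the graph $\Gamma(f)\subseteq\K^m\times\K^n$. Projecting onto the first $m$ coordinates, every nonempty fiber is a singleton, i.e.\ of $d$\hyph dimension~$0$; applying Dim~4 once per extra coordinate (peeling off $\K^n$ one factor at a time, using that $\Gamma(f)(1)=\emptyset$ at each stage) yields $d(\Gamma(f))=d(C)$. On the other hand, $f(C)$ is the projection of $\Gamma(f)$ onto the last $n$ coordinates (up to a permutation, harmless by Dim~3), so by \emph{(i)}, $d(f(C))\le d(\Gamma(f))$. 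Assembling the chain,
\[
\dim(f(C))\le d(f(C))\le d(\Gamma(f))=d(C)=\dim(C),
\]
the last equality by the ``moreover'' clause applied to the $\Ds$ set $C$. The ``in particular'' follows because $\K^n$ and $\K^{n+1}$ are closed, hence $\Ds$, with geometric dimensions $n$ and $n+1$; a surjective definable $f:\K^n\to\K^{n+1}$ would force $n+1=\dim(f(\K^n))\le\dim(\K^n)=n$, absurd.

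The only delicate point in the plan is the comparison $\dim(X)\le d(X)$ for arbitrary definable $X$, since the axioms only pin down $d$ on $\Ds$ sets via the ``moreover'' clause; the argument above circumvents this by producing, inside the projection witnessing $\dim(X)=e$, an open box whose $d$\hyph value is forced to be $e$ by Dim~1 and the iterated Dim~4. Everything else is a routine unwinding of the axioms, and no additional assumption on $f$ (continuity, $\Ds$ graph, etc.) is needed, which is what makes the statement strictly stronger than Corollary~\ref{cor:dim-function}.
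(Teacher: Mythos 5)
The paper offers no proof of this statement at all: it merely records that Conjecture~\ref{conj:dim} implies it, with no further argument. Your proposal spells out exactly that implication, via the graph trick together with the comparison $\dim\le d$, and the reasoning is sound; this is the route the paper intends.

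One small slip worth flagging: in step~(ii) you justify $d(I)=1$ for a proper open interval $I\subsetneq\K$ ``by Dim~1'', but Dim~1 only pins down $d(\K)=1$, $d(\{a\})=0$, and nothing about intermediate intervals --- from Dim~1--4 alone one can only conclude $d(I)\in\{0,1\}$. What you actually need is the ``moreover'' clause of Conjecture~\ref{conj:dim} applied to $I$, together with the observation that open sets are $\Ds$ (being Boolean combinations of closed sets). Once that is invoked, step~(ii) collapses entirely: a nonempty open box $U\subseteq\K^e$ is itself a $\Ds$ set with $\dim(U)=e$, so $d(U)=e$ directly, with no iteration of Dim~4 required. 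The remaining chain
\[
\dim(f(C))\le d(f(C))\le d(\Gamma(f))=d(C)=\dim(C)
\]
is correct as you wrote it: the first inequality uses your comparison lemma, the second is coordinate projection monotonicity from Dim~2 and Dim~4, the middle equality peels off the $n$ graph coordinates using that the fibers are singletons (so the $X(1)$ piece in Dim~4 is empty at each stage), and the last equality is the ``moreover'' clause applied to the $\Ds$ set $C$.
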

See Corollary~\ref{cor:dim-function} for a partial result in the
direction of the above conjecture.

We introduced before \iminimal expansions of $\Rbar$: the definition extends
to DC structures in the obvious way.
For more on \iminimal structures outside~$\R$, see \cite{tame:3}.

\begin{conjecture}
The open core of $\K$ is \iminimal.
\end{conjecture}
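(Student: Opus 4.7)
The plan is to prove by induction on formula complexity that every set definable in the open core of $\K$ is almost open (\ao); since i-minimality concerns only subsets of $\K$, Lemma~\ref{lem:ao-meager} would then immediately yield the conclusion. The base case is immediate because the atomic predicates of the open core are exactly the open definable subsets of $\K^n$, and the inductive step for Boolean connectives is handled by the fact that the \ao sets form a Boolean algebra.

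The substantive step is closure under existential quantification (universal being dual). Given an \ao set $S \subseteq \K^{n+1}$, write $S = U \Sdiff F$ with $U$ open definable and $F$ definably meager. The elementary inclusions
\[
\Pi(U) \setminus \Pi(F) \subseteq \Pi(S) \subseteq \Pi(U) \cup \Pi(F)
\]
reduce the task to showing that $\Pi(S) \Sdiff \Pi(U)$ is definably meager, since $\Pi(U)$ is already open. One half is tractable via a definable Kuratowski-Ulam argument in the spirit of Lemma~\ref{lem:dim-fiber-0}: the set $\Bad(F) \coloneqq \set{\bar x \in \K^n : F_{\bar x} \text{ has nonempty interior}}$ is definably meager, hence nowhere dense by Lemma~\ref{res:meager-n}. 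For $\bar x \in \Pi(U) \setminus \Bad(F)$ the open nonempty fiber $U_{\bar x}$ cannot be swallowed by the nowhere dense $F_{\bar x}$, so $\bar x \in \Pi(S)$; therefore $\Pi(U) \setminus \Pi(S) \subseteq \Bad(F)$ is nowhere dense.

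The symmetric half $\Pi(S) \setminus \Pi(U)$ is the main obstacle. Unlike $\Bad(F)$, the image $\Pi(F)$ of a definably meager set can genuinely have interior (a horizontal line in $\K^2$ projects onto all of $\K$), so no naive bound applies. The plan here is to strengthen the inductive invariant so that the meager correction comes equipped with additional structural data: taking the canonical decomposition $U = \inter(S)$ and $F = S \setminus \inter(S)$, one can apply $\Ds$-Uniformization (Lemma~\ref{lem:Fs-uniformization}) to produce, on $\Pi(F) \setminus \Pi(U)$, a definable selection $\sigma(\bar x) \in F_{\bar x}$ whose discontinuity set is nowhere dense, and then appeal to tameness to control the image. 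Devising a notion of ``\ao-with-witness'' that is preserved simultaneously under Boolean combinations and under projections is where the conjecture really lives, and where I expect the proof to require a genuinely new idea beyond the machinery assembled in this paper.
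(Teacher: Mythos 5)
This statement is a \emph{conjecture} in the paper, not a theorem: the author explicitly leaves it open (even for expansions of $\Rbar$) and points only to Lemmas~\ref{lem:ao-meager} and~\ref{lem:dim} as partial results in its direction, so there is no proof in the paper for your attempt to be measured against. Your proposal, for its part, is also not a proof: it sets up an inductive scheme, carries out the tractable half of the existential-quantifier step, and then candidly identifies the core obstruction without resolving it.

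The framing is right, and the half you do handle is sound in spirit (though one should be careful: $F$ is only definably meager, not $\Ds$, so before invoking anything like Lemma~\ref{lem:dim-fiber-0} or Lemma~\ref{res:meager-n}\enumref{en:equiv-n-bad} you must first pass to a $\Ds$ superset with empty interior via Fact~\ref{fact:meager}, and those lemmas fiber over $\K^{\dim A}$ rather than over an arbitrary coordinate block, so a supplementary statement is needed). The fatal gap, however, is exactly the one you flagged: $\Pi(S)\setminus\Pi(U)$ lives inside $\Pi(F)$, and the projection of a definably meager set need not be meager --- a single horizontal line in $\K^2$ is nowhere dense, closed, and $\Ds$, yet projects onto all of $\K$. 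The remedy you gesture at, an ``\ao-with-witness'' class stable under Boolean combinations and projections, is not supplied by the paper's machinery: $\Ds$-Uniformization (Lemma~\ref{lem:Fs-uniformization}) requires the set being uniformized to be $\Ds$, but the meager correction produced by the inductive hypothesis is merely \ao, and nothing in the paper promotes Boolean combinations or projections of $\Ds$ sets back to $\Ds$. Without a witnessed class that is simultaneously closed under complements and projections, the induction cannot close --- which is precisely why the statement remains a conjecture.
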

See lemmas~\ref{lem:ao-meager} and~\ref{lem:dim} for partial results in the
direction of the above conjecture.

The above conjectures are open even for the case when $\K$ expands~$\Rbar$.


\section{Pathologies}\label{sec:pathology}
In this section I will give a brief exposition (far from a complete one) of
``pathological'' phenomena in \tame structures, which may
contradict our choice of the nomenclature ``\tame{}''.

\begin{asparaenum}
\item\label{en:RN} 
The structure $\pair{\Rbar, \N}$ is wild, the class of its definable sets
coincides with the class of projective sets \cite{kechris}*{Exercise~37.6}:
(thus, a descriptive set theorist might say that $\pair{\Rbar,\N}$ is not
\emph{that} wild, after all).
\item
Let $F$ be a proper real closed subfield of~$\R$, and $\pair{\Rbar,F}$ be the
expansion of $\Rbar$ with a unary predicate for $F$.  Then, 
$\pair{\Rbar, F}$ has o-minimal open core, and thus it is \tame
(\cites{vdd-dense, DMS}).  Notice that $F$ is definable subset of $\R$ which is
both dense and codense.  
\begin{enumerate}
\item 
If we take $F$ is countable, then $F$ is an $\mathcal F_\sigma$ set which is
not $\Ds$ (in $\pair{\Rbar,F}$).  
\item
If we take $F$ not Lebesgue measurable (resp., not projective),
we have an example of a \tame structure ($\pair{\Rbar, F}$) which defines a set
which is not Lebesgue measurable (resp., not projective).%
\footnote{Since there are only $2^{\aleph_0}$
projective sets, and $2^{2^{\aleph_0}}$ real closed subfield of~$\R$, there
are many real closed non projective subfield of~$\R$. 
On the other hand, I don't know how to prove the existence of a
non-measurable real closed subfield of~$\R$.}
\end{enumerate}

\item
\cite{FKMS} give an example of an \iminimal (and hence \tame) expansion of
$\Rbar$ that defines a Borel isomorph of $\pair{\Rbar, \N}$.
\setcounter{saveenum}{\value{enumi}}
\end{asparaenum}

The next examples are about \tame structures outside the reals.
\begin{asparaenum}
\setcounter{enumi}{\value{saveenum}}
\item 
There exists an ultraproduct $\K$ of \ominimal structures,
such that $\K$ has the Independence Property (notice that $\K$ will be locally
\ominimal, and hence \tame).
\item
In \cite{HP07} Hrushovski and Peterzil produce an \ominimal structure $\K$
(outside~$\R$) and a first order sentence which is true in $\K$ but fails in any possible interpretation over the field of real numbers. 
\end{asparaenum}


\bibliographystyle{alpha}	
\bibliography{tame}		

\end{document}